\documentclass[11pt]{amsart}

\usepackage{amscd,amssymb,amsopn,amsmath,amsthm,mathrsfs,graphics,amsfonts,enumerate,verbatim,calc
}
\usepackage{bbm}
\usepackage[all,cmtip]{xy}
\usepackage[all]{xy}
\usepackage{tikz}
\usepackage{lscape}
\usepackage{enumitem}
\usetikzlibrary{matrix,arrows,decorations.pathmorphing,cd}
\usepackage{hyperref}
\hypersetup{colorlinks=true,linkcolor={blue}}
\usepackage{cleveref}
\usepackage{url}

\usepackage{scalerel}
\usepackage{stackengine,wasysym}
\usetikzlibrary {positioning}
\usetikzlibrary{patterns}
\usetikzlibrary{calc}
\definecolor {processblue}{cmyk}{0.96,0,0,0}

\usepackage{subfigure}

\usepackage{comment} 

\usepackage{ dsfont }

\usepackage{color}


\usepackage[OT2,OT1]{fontenc}
\newcommand\cyr{%
\renewcommand\rmdefault{wncyr}%
\renewcommand\sfdefault{wncyss}%
\renewcommand\encodingdefault{OT2}%
\normalfont
\selectfont}
\DeclareTextFontCommand{\textcyr}{\cyr}

\usepackage{amssymb,amsmath}

\DeclareFontFamily{OT1}{rsfs}{}
\DeclareFontShape{OT1}{rsfs}{n}{it}{<-> rsfs10}{}
\DeclareMathAlphabet{\mathscr}{OT1}{rsfs}{n}{it}

\topmargin=0in
\oddsidemargin=0in
\evensidemargin=0in
\textwidth=6.5in
\textheight=8.5in

\numberwithin{equation}{section}
\hyphenation{semi-stable}

\newtheorem{theorem}{Theorem}[section]
\newtheorem{lem}[theorem]{Lemma}
\newtheorem{cor}[theorem]{Corollary}

\newtheorem{question}{Question}

\newtheorem{conjecturbe}[theorem]{Conjecture}

\newtheorem{prop}[theorem]{Proposition}

\theoremstyle{definition}
\newtheorem{defn}[theorem]{Definition}
\theoremstyle{remark}
\newtheorem{remark}[theorem]{Remark}

\newtheorem{example}[theorem]{Example}





\newcommand{\Ass}{\operatorname{Ass}}

\newcommand{\im}{\operatorname{im}}
\renewcommand{\ker}{\operatorname{ker}}
\newcommand{\tr}{\operatorname{tr}}

\newcommand{\grade}{\operatorname{grade}}
\newcommand{\gr}{\operatorname{gr}}

\newcommand{\pd}{\operatorname{pd}}
\newcommand{\id}{\operatorname{id}}
\newcommand{\Ext}{\operatorname{Ext}}

\newcommand{\Tor}{\operatorname{Tor}}
\newcommand{\Hom}{\operatorname{Hom}}

\newcommand{\End}{\operatorname{End}}

\newcommand{\depth}{\operatorname{depth}}

\newcommand{\coker}{\operatorname{coker}}

\newcommand{\Soc}{\operatorname{Soc}}

\newcommand{\rank}{\ensuremath{\operatorname{rank}}}
\renewcommand{\mod}{\operatorname{mod}}

\newcommand{\Tr}{\operatorname{Tr}}

\newcommand{\p}{\mathfrak{p}}

\newcommand{\m}{\mathfrak{m}}

\newcommand{\Z}{\mathbb{Z}}

\newcommand{\w}{\omega_R}
\newcommand{\chara}{\operatorname{char}}
\renewcommand{\bar}{\overline}

\newcommand{\Ann}{\operatorname{Ann}}
\newcommand{\Deep}{\operatorname{Deep}}
\newcommand{\DF}{\operatorname{DF}}

\def\grade{{\rm grade}}


\newcommand{\JLL}[1]{{\color{blue}\sf #1}} 




\begin{document}

\title[Annihilators of (co)Homology and their Influence on the Trace Ideal]{Annihilators of (co)Homology and their Influence on the Trace Ideal}

\author[Lyle]{Justin Lyle}
\email[Justin Lyle]{jll0107@auburn.edu}
\urladdr{https://jlyle42.github.io/justinlyle/}
\address{Department of Mathematics and Statistics \\ 221 Parker Hall\\
Auburn University\\
Auburn, AL 36849}

\author[Maitra]{Sarasij Maitra}
\email[Sarasij Maitra]{maitra@math.utah.edu}
\urladdr{https://sarasij93.github.io}
\address{155 South 1400 East, JWB 233 \\ Salt Lake City, UT 84112}

\subjclass[2020]{Primary 13D07, 13C13; Secondary 13D02}

\keywords{trace ideal, nearly Gorenstein, almost Gorenstein, Koszul cycles, colon ideal, exterior power, minimal multiplicity, canonical ideal}

\begin{abstract}
Let $(R,\m)$ be a commutative Noetherian local ring, and suppose $R$ is Cohen-Macaulay with canonical module $\w$. We develop new tools for analyzing questions involving annihilators of several homologically defined objects. Using these, we study a generalization introduced by Dao-Kobayashi-Takahashi of the famous Tachikawa conjecture, asking in particular whether the vanishing of $\m \Ext_R^i(\w,R)$ should force the trace ideal of $\w$ to contain $\m$, i.e., for $R$ to be nearly Gorenstein. We show this question has an affirmative answer for numerical semigroup rings of minimal multiplicity, but that the answer is negative in general. Our proofs involve a technical analysis of homogeneous ideals in a numerical semigroup ring, and exploit the behavior of Ulrich modules in this setting.
\end{abstract}

\maketitle

\section{Introduction}

Let $(R,\m,k)$ be a Noetherian local ring or a positively graded $k$-algebra with homogeneous maximal ideal $\m$. The following conjecture, first stated in its current version by \cite{AB05}, serves a commutative algebra version of the famous Tachikawa conjecture, which is intimately related to several longstanding questions in the representation theory of Artin algebras:
\begin{conjecturbe}\label{tachikawa}

Suppose $R$ is Cohen-Macaulay with canonical module $\w$. If $\Ext^i_R(\w,R)=0$ for all $i>0$, then $R$ is Gorenstein.

\end{conjecturbe}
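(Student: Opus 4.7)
The plan is to attempt the classical reduction to the Artinian case via a regular sequence and then bridge Ext-vanishing to freeness of $\w$ through the trace ideal.

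First I would reduce to the Artinian situation. Since $\w$ is maximal Cohen-Macaulay, every $R$-regular element $x$ is $\w$-regular, so applying $\Hom_R(\w,-)$ to the short exact sequence $0 \to R \xrightarrow{x} R \to R/xR \to 0$ and invoking $\Ext^i_R(\w,R)=0$ for $i>0$ yields $\Ext^i_R(\w, R/xR)=0$ for $i>0$. Because $\w/x\w$ is the canonical module of $R/xR$, the standard base-change isomorphism $\Ext_R^i(\w, R/xR) \cong \Ext_{R/xR}^i(\w/x\w, R/xR)$ (valid since $x$ is $\w$-regular) promotes this to the hypothesis on the quotient ring. Iterating along a full system of parameters reduces the problem to the case where $R$ is Artinian with $\w \cong E_R(k)$; in this setting $R$ is Gorenstein if and only if $\w \cong R$, equivalently if and only if $\tr(\w)=R$.

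In the Artinian case I would study the evaluation map $\w \otimes_R \Hom_R(\w,R) \to R$, whose image is $\tr(\w)$. If one can show $\tr(\w)=R$, then a surjection $\w^n \twoheadrightarrow R$ is produced and splits because $R$ is free; dualizing into $\w$ and using the canonical isomorphism $\Hom_R(\w,\w)\cong R$ then realizes $\w$ as a direct summand of $R^n$, whence $\w$ is free and $R \cong \w$. The Ext-vanishing hypothesis provides the natural first input: it guarantees that $\Hom_R(\w,-)$ is exact on short exact sequences beginning with $R$, so any presentation $R^a \to R^b \to M \to 0$ yields $\Hom_R(\w,R)^a \to \Hom_R(\w,R)^b \to \Hom_R(\w,M) \to 0$, making $\Hom_R(\w,R)$ a rich reservoir of elements of $\tr(\w)$.

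The principal obstacle --- and the reason Conjecture~\ref{tachikawa} has remained open for decades --- is closing precisely this last gap: vanishing of $\Ext^i_R(\w,R)$ does not obviously force $\tr(\w)$ to fill out all of $R$. A natural two-stage strategy suggested by the present paper is first to deduce that $\m \subseteq \tr(\w)$ (the nearly Gorenstein property), and then to leverage refined information about the annihilators $\Ann \Ext^i_R(\w,R)$ to improve nearly Gorenstein to Gorenstein. The paper's positive answer for numerical semigroup rings of minimal multiplicity suggests that the intermediate nearly-Gorenstein step is attainable in favorable settings, while the counterexamples established in general make clear that the jump from nearly Gorenstein to Gorenstein will require genuinely new input beyond Ext-vanishing.
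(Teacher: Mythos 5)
The item you are addressing is labeled as a conjecture in the paper, and the paper does not prove it; Conjecture~\ref{tachikawa} is the commutative Tachikawa conjecture, which remains open. The paper merely records it, cites the known special cases (generically Gorenstein rings, positively graded rings, rings with $\mu_R(\w)\le 2$ or $\m^3=0$), and then shifts attention to the weaker variant Question~\ref{daoetal} about the trace ideal and the nearly Gorenstein property. So there is no ``paper's own proof'' to compare against, and no complete proof was to be expected of you here.

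With that understood, your write-up is honest: you explicitly identify the final step as exactly the open problem. Two technical remarks. First, the Artinian reduction is sound --- the base change $\Ext^i_R(\w,R/xR)\cong\Ext^i_{R/xR}(\w/x\w,R/xR)$ is valid because $x$ is regular on $\w$ (so the deleted free resolution of $\w$ stays a resolution after $-\otimes_R R/xR$), and $\w/x\w$ is the canonical module of $R/xR$ --- but this reduction is classical and the Artinian commutative case is precisely the hard core of the conjecture (it is the commutative instance of the Artin-algebra Tachikawa problem), so you have relocated the difficulty rather than reduced it. Second, your proposed two-stage plan ``first nearly Gorenstein, then Gorenstein'' is attractive but delicate: the paper's Section~\ref{counterex} shows the first stage already fails under the weaker hypothesis $\m\Ext^i_R(\w,R)=0$ for all $i>0$ (Theorem~\ref{finex}). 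That example is not a counterexample to Conjecture~\ref{tachikawa} itself, since there $\Ext^1_R(\w,R)\ne 0$, but it demonstrates that the trace-ideal bridge you are hoping to cross does not follow formally from annihilator information. The affirmative result in the paper, Theorem~\ref{nearlygormainthm}, only establishes the nearly Gorenstein conclusion from $\m$-annihilation of $\Ext$ in the special case of numerical semigroup rings of minimal multiplicity; it says nothing about the passage from nearly Gorenstein to Gorenstein, which is the genuinely missing input you correctly flag.
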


Conjecture \ref{tachikawa} is known if $R$ is generically Gorenstein, i.e., $R_{\p}$ is Gorenstein for all $\p \in \Ass(R)$ \cite{AB05} (cf. \cite{HH05}), if $R$ is positively graded over $k$ \cite{Ze90}, or if certain invariants are small, e.g. if $\mu_R(\w) \le 2$ or if $\m^3=0$ \cite{HS04}.
Given the ubiquity and utility of the Gorenstein property, several axes have been introduced along which one can measure the failure of this property to hold. One such object that has received a great deal of recent attention is the trace ideal of the canonical module (see for instance \cite{HT19,DK21,He21,He22,HM22,He23} etc.). We recall this ideal is defined as
\[\tr_R(\w):=\langle f(x) \mid x \in \w, f \in \Hom_R(\w,R) \rangle .\]
The ideal $\tr_R(\w)$ gives a measure of how far away $R$ is from being Gorenstein. Indeed, $R$ is Gorenstein if and only if $\tr_R(\w)=R$, and it follows from \cite[Lemma 2.1]{HT19} that $\tr_R(\w)$ defines the non-Gorenstein locus of $R$. If $R$ is not Gorenstein, then the next best thing by this measure is if $\tr_R(\w)=\m$, and for this reason rings with $\tr_R(\w) \supseteq \m$ are called \emph{nearly Gorenstein}, and have been shown to have many desirable properties; see for example \cite{HT19,CS21,DK21}. 

The following question was posed in \cite[Question 4.3]{DK21} and serves as a variation of the Tachikawa conjecture for the nearly Gorenstein condition:

\begin{question}\label{daoetal}

Suppose $(R,\m,k)$ is a Cohen-Macaulay local ring with canonical module $\w$. Suppose $\m \Ext^i_R(\w,R)=0$ for all $i>0$. Must $R$ be nearly Gorenstein?

\end{question}



As Question \ref{daoetal} is an extension of the Tachikawa conjecture, it is natural to narrow its focus to cases where the Tachikawa conjecture is known to hold. Some evidence in this setting was recently provided by Dao-Kobayashi-Takahashi, who gave an affirmative answer for generically Gorenstein rings of type $2$ \cite[Theorem 4.4]{DK21}, and Esentepe, who established the case where $R$ is generically Gorenstein and $\tr_R(\w)$ is radical \cite[Theorem C]{Es24}.



In this work, we develop a variety of new tools for attacking problems involving (co)homological objects associated to ideals and their annihilators. Our methods allow us to prove several new facts about numerical semigroup rings, and especially for numerical semigroup rings of minimal multiplicity, allow to describe a tight relationship between good properties of trace and colon ideals, and direct sum decompositions of the first syzygy of an ideal. Our main result illustrating the utility of our techniques is the following (see Theorem \ref{generalideals}):


\begin{theorem}\label{introthm1}

Let $R$ be a numerical semigroup ring of minimal multiplicity and let $I=(x_1,\dots,x_n)$ be a homogeneous ideal of $R$ with $|x_i|<|x_{i+1}|$ for all $i$. Let $\w$ be the canonical module of $R$. Then the following are equivalent:

\begin{enumerate}

\item[$(1)$] $\m \Ext^1_R(I,\Hom_R(I,\w))=0$.
\item[$(2)$] $\m \Tor^R_1(I,R/I)=0$.
\item[$(3)$] $\m \bigwedge^2_R(I)=0$.
\item[$(4)$] $\m B_1(I) \subseteq IZ_1(I)$. 
\item[$(5)$] $\tr_R(I)=\m$.
\item[$(6)$] $(x_1:I)=\m$.
\item[$(7)$] $\Omega^1_R(I) \cong \bigoplus_{i=2}^n \m(-|x_i|)$ as graded $R$-modules.

\end{enumerate}
Moreover, in each of the conditions $(1)-(4)$ we may replace $\m$ by its minimal homogeneous reduction. 
\end{theorem}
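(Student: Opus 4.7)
The plan is to partition the seven conditions into three groups --- the homological/Koszul conditions (1)--(4), the ideal-theoretic conditions (5)--(6), and the structural decomposition (7) --- and to bridge the homological and ideal-theoretic clusters via the minimal multiplicity hypothesis. Throughout, fix the presentation $F = \bigoplus_{i=1}^n R(-|x_i|) \twoheadrightarrow I$ sending $e_i \mapsto x_i$, so that $\Omega^1_R(I) = Z_1(\x)$ and $B_1(\x) \subseteq Z_1(\x)$ is the submodule generated by the trivial Koszul relations $x_i e_j - x_j e_i$.

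For (1) $\iff$ (2), Hom-Tensor adjunction yields, for $I$ maximal Cohen-Macaulay over our $1$-dimensional CM ring $R$, an isomorphism $\Ext^1(I, \Hom(I, \w)) \cong \Ext^1(I \otimes I, \w)$; applying $\Hom(-, \w)$ to the short exact sequence $0 \to \Tor_1^R(I, R/I) \to I \otimes I \to I^2 \to 0$ and using local duality in dimension $1$ (so that $\Ext^1(\Tor_1(I, R/I), \w)$ is the Matlis dual of the finite-length module $\Tor_1(I, R/I)$, while $\Ext^1(I^2, \w) = 0$ as $I^2$ is MCM), one identifies $\Ext^1(I, \Hom(I, \w))$ with the Matlis dual of $\Tor_1^R(I, R/I)$, so their annihilators coincide. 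The equivalences (2) $\iff$ (3) $\iff$ (4) then follow from the standard identifications of $\bigwedge^2 I$ with $B_1(\x)/(B_1(\x) \cap I Z_1(\x))$ and with the antisymmetric part of $\Tor_1(I, R/I) \subseteq I \otimes I$, which match annihilators after the appropriate bookkeeping provided by the earlier tools of the paper.

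For (6) $\iff$ (7), degree analysis is key: under the strict inequalities $|x_1| < \cdots < |x_n|$, the smallest-degree minimal syzygies of $I$ are the trivial Koszul relations $x_1 e_i - x_i e_1$ for $i \geq 2$, and the decomposition (7) exhibits the summand $\m(-|x_i|)$ as the $\m$-submodule of $\Omega^1_R(I)$ generated by $x_1 e_i - x_i e_1$. Such a module isomorphism holds precisely when $(x_1 : x_i) = \m$, and intersecting over $i$ yields $(x_1 : I) = \m$. For (5) $\iff$ (6), a fraction-field computation gives $\tr_R(I) = x_1^{-1} \cdot (x_1 : I) \cdot I$, and using $\m I = x_1 \m$ (a consequence of minimal multiplicity together with $x_1 \in I$), both conditions reduce to the non-triviality $I \not\subseteq (x_1)$, hence to each other.

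The decisive bridge (4) $\iff$ (6) is where minimal multiplicity is indispensable. The condition $\m B_1 \subseteq I Z_1$ demands that $\m$-multiples of the trivial Koszul relations lift to $I$-multiples of general syzygies; under $\m^2 = x_1 \m$ it suffices to verify the $x_1$-multiples statement, and a careful degree-piece analysis --- exploiting $|x_1| + |x_i| < |x_i| + |x_j|$ for $i < j$ to force leading-order cancellation --- translates this into $(x_1 : I) = \m$. The main obstacle will be precisely this degree-by-degree tracking, ensuring that non-minimal syzygies contributing to $Z_1$ do not produce false positives. The ``moreover'' clause follows because $x_1$ is the minimal homogeneous reduction of $\m$ under minimal multiplicity, and the relation $x_1 \m = \m^2$ ensures the annihilator conditions in (1)--(4) by $\m$ and by $(x_1)$ coincide on the relevant $\m$-torsion modules.
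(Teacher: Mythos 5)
There is a persistent and serious conflation in your proposal between two different elements: $x_1$, the lowest-degree minimal generator of $I$, and $y := t^{a_1}$, the lowest-degree generator of $\m$ (the minimal homogeneous reduction of $\m$). These coincide only if $t^{a_1} \in I$, which is not assumed. This error propagates through several steps. You claim $\m^2 = x_1\m$ as a consequence of minimal multiplicity; in fact minimal multiplicity gives $\m^2 = y\m$, and $\m^2 = x_1\m$ is simply false in general. You likewise claim ``$x_1$ is the minimal homogeneous reduction of $\m$ under minimal multiplicity'' to justify the ``moreover'' clause, which is incorrect for the same reason. In the paper, $y$ and $x_1$ play genuinely distinct roles: the key technical lemma (Theorem \ref{setup}) shows that $yB_1(I) \subseteq IZ_1(I)$ forces $y \in (x_1:I)$, and it is only afterward that the Ulrich property of $(x_1:I)$ (from Lemma \ref{dualcolontraceulrich}(3), since $(x_1:I)$ is torsion-free with no free summand) combined with Proposition \ref{idealulrichred} upgrades $y \in (x_1:I)$ to $(x_1:I) = \m$.

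Your proof of (5) $\iff$ (6) is circular as a consequence. You invoke ``$\m I = x_1\m$, a consequence of minimal multiplicity together with $x_1 \in I$,'' but $\m I = x_1\m$ is \emph{equivalent} to $(x_1:I) = \m$, i.e.\ to condition (6) itself. (To see it fails independently: in $R = k[t^4, t^9, t^{14}, t^{15}]$, which has minimal multiplicity, take $I = (t^9, t^{15})$; then $t^4 \cdot t^{15} = t^{19}$ lies in $\m I$ but not in $t^9\m$ since $10 \notin \langle 4,9,14,15\rangle$.) The paper instead gets (6) $\Rightarrow$ (5) cheaply from $\tr_R(I) = \sum_i (x_i:I) \supseteq (x_1:I) = \m$ (Proposition \ref{traceprop}(3)) plus $\tr_R(I) \neq R$ as $I$ is nonprincipal, and obtains (5) $\Rightarrow$ (6) only by closing the long cycle of implications.

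Your treatment of (2) $\iff$ (3) $\iff$ (4) via ``standard identifications'' is also incomplete. From Proposition \ref{thetorthing} one does get the map $\bigwedge^2_R(I) \to \Tor_1^R(I,R/I)$ with image $B_1(I)/IZ_1(I)$, and this gives (2) $\Rightarrow$ (4) and (3) $\Rightarrow$ (4) directly. But the converses are not formal: the map $\bigwedge^2_R(I) \to B_1(I)/IZ_1(I)$ need not be injective a priori, and $\m B_1 \subseteq IZ_1$ by itself does not annihilate all of $\Tor_1(I,R/I)$ (only the piece coming from boundaries). In the paper these implications are recovered only by traversing the full loop, and the hard step --- $\m B_1(I) \subseteq IZ_1(I) \Rightarrow (x_1:I) = \m$ --- rests on Theorem \ref{setup}, whose induction turns crucially on the hypothesis $(x_\ell:x_j)(x_j:x_1) \subseteq (y)$, supplied by the Ulrichness of the colon ideals. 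Your proposal flags this step as requiring a ``careful degree-piece analysis,'' which is directionally right, but the degree bookkeeping alone is not enough: without the Ulrich input the induction cannot collapse chains $y \in (x_{\ell_j}:x_j)$, $y \in (x_1:x_{\ell_j})$ into $y \in (x_1:x_j)$.

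Finally, your adjunction claim $\Ext^1_R(I,\Hom_R(I,\w)) \cong \Ext^1_R(I\otimes_R I,\w)$ is true here but is not merely ``Hom--tensor adjunction''; it requires a spectral sequence argument (or equivalent) that uses $\Tor_1^R(I,I)$ being torsion and $\w$ having injective dimension $1$. The paper instead dimension-shifts to $\Ext^2_R(R/I, I^\vee)$ and cites a Matlis duality lemma of Kobayashi--Ozeki (Lemma \ref{torsiondual}). Your conclusion that (1) and (2) have matching annihilators via Matlis duality is correct once these details are supplied.
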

Applying Theorem \ref{introthm1}, we show Question \ref{daoetal} has an affirmative answer in this setting (see Theorem \ref{nearlygormainthm}). This behavior turns out to be quite subtle, as we provide examples showing Question \ref{daoetal} has a negative answer in general, and in particular, when the hypotheses of Theorem \ref{introthm1} are relaxed only slightly (see Theorem \ref{finex}):

\begin{theorem}\label{introthm2}
There exists a numerical semigroup ring $R$ with embedding dimension $4$, multiplicity $5$, and type $3$ for which Question \ref{daoetal} has a negative answer.
\end{theorem}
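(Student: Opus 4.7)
The plan is to construct an explicit numerical semigroup ring $R$ of embedding dimension $4$, multiplicity $5$, and type $3$ which satisfies $\m \Ext^i_R(\w, R) = 0$ for every $i > 0$ yet fails to be nearly Gorenstein. Because the multiplicity strictly exceeds the embedding dimension, such a ring lies outside the minimal multiplicity hypothesis of Theorem \ref{introthm1}, which is precisely the window in which a counterexample can exist. I would begin by enumerating four-generated numerical semigroups $S = \langle 5, a, b, c \rangle$ whose Ap\'ery set $\mathrm{Ap}(S, 5)$ has exactly three maximal elements under the order $u \leq_S v \Leftrightarrow v - u \in S$, since those maximal elements shifted by $-5$ are the pseudo-Frobenius numbers and thus determine the type. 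A natural candidate is $S = \langle 5, 7, 8, 11 \rangle$, whose Ap\'ery set $\{0, 7, 8, 11, 14\}$ has three maximal elements $\{8, 11, 14\}$ and pseudo-Frobenius set $\{3, 6, 9\}$.

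Once a candidate is fixed, I would realize $\w$ as the fractional ideal generated by $\{t^{F - f} : f \in \mathrm{PF}(S)\}$, where $F$ is the Frobenius number of $S$, and compute $\tr_R(\w) = \w \cdot (R :_{Q(R)} \w)$ as a monomial ideal. The objective at this stage is to exhibit a monomial $t^s \in \m$ which does not appear in $\tr_R(\w)$, certifying that $R$ is not nearly Gorenstein; this reduces to a finite verification on the small semigroup elements comprising $\m / \m^2$.

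The harder step is verifying $\m \Ext^i_R(\w, R) = 0$ for \emph{every} $i > 0$. Because $R$ is a one-dimensional Cohen-Macaulay domain and $\w$ is maximal Cohen-Macaulay, each $\Ext^i_R(\w, R)$ has finite length, and the identifications $\Ext^{i+1}_R(\w, R) \cong \Ext^1_R(\Omega^i \w, R)$ reduce the problem to an analysis of the sequence $\{\Omega^i \w\}_{i \geq 0}$ of syzygies, all of which are again maximal Cohen-Macaulay. The main obstacle is propagating the annihilator condition through the entire infinite family of Ext modules; my strategy is to compute a long initial segment of the graded minimal free resolution of $\w$ from the semigroup grading of $R$, identify eventual periodicity or stable recurrences in the resolution matrices, and use that structure to reduce the infinite check to a finite one. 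Computer algebra would be the most efficient tool for confirming the pattern on this specific example.
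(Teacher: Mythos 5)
Your candidate semigroup $S=\langle 5,7,8,11\rangle$ has the claimed invariants (embedding dimension $4$, multiplicity $5$, pseudo-Frobenius numbers $\{3,6,9\}$ so type $3$), but it does not give a counterexample: this ring is in fact almost Gorenstein, hence nearly Gorenstein. To see this quickly, the Frobenius number is $9$, so a canonical ideal is $I=t^8(1,t^3,t^6)=(t^8,t^{11},t^{14})$, and one checks directly that each semigroup generator lies in $(t^8:I)$. For instance $t^5\cdot t^{11}=t^{16}=t^8\cdot t^8$ and $t^5\cdot t^{14}=t^{19}=t^8\cdot t^{11}$; similar identities hold for $t^7$, $t^8$, and $t^{11}$, using $10,13,11,14,17\in S$. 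Thus $\m\subseteq(t^8:I)\subseteq\tr_R(I)$, so $R$ is nearly Gorenstein and Question \ref{daoetal} is vacuously affirmative here. The paper's example is instead $R=k[\![t^5,t^6,t^{13},t^{14}]\!]$, which is a far-flung Gorenstein ring with $\tr_R(\w)=\mathfrak{c}_R\subsetneq\m$.

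Separately, the strategy you outline for verifying $\m\Ext^i_R(\w,R)=0$ for all $i>0$, namely computing a long initial segment of a minimal free resolution and hoping to ``identify eventual periodicity or stable recurrences,'' would fail even for a correct example. One-dimensional Cohen-Macaulay local rings that are not hypersurfaces do not admit eventually periodic resolutions in general; in the paper's example the initial Betti numbers of the canonical ideal are $3,8,24,\dots$, so there is no periodicity to exploit, and a naive finite truncation would not close the argument. The paper sidesteps the infinite verification entirely by a structural observation: it exhibits an explicit ideal $L$ with $\Omega^1_R(I)\cong L^{\oplus 2}$ and $\tr_R(L)=\m$. By the Dao-Kobayashi criterion (Proposition \ref{traceprop}(4) together with \cite[Lemma 2.2]{DK21}), the trace condition forces $\m\Ext^j_R(L,R)=0$ for all $j\geq 1$, which via dimension shifting yields $\m\Ext^i_R(I,R)=0$ for all $i\geq 2$ at one stroke. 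Only the case $i=1$ then requires a direct finite calculation of $\ker(T^T)/\im(S^T)$. That reduction of an infinite family of Ext vanishings to a single trace computation plus one explicit cohomology group is the essential idea missing from your proposal.
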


In particular, such an example from Theorem \ref{introthm2} has almost minimal multiplicity. Moreover, there exists such an example that is a far-flung Gorenstein ring in the sense of \cite{He23}, and so may be viewed as being as far from nearly Gorenstein as possible. As a byproduct, our example gives a situation where the equality of \cite[Question 1.4]{Es24} does not hold. 

As an additional application of Theorem \ref{introthm1}, we have the following that captures a new fundamental structure of numerical semigroup rings of minimal multiplicity (see Corollary \ref{thisiscool}):
\begin{theorem}\label{thisiscoolintro}
Suppose $R$ is a numerical semigroup ring that is not a DVR. Then $R$ has minimal multiplicity if and only if $Z_1(\m) \cong \m^{\oplus e(R)-1}$. 
\end{theorem}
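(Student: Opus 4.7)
The plan is to derive Theorem \ref{thisiscoolintro} from Theorem \ref{introthm1} applied to the special case $I = \m$, supplemented by a rank comparison for the converse.

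For the forward direction, I would assume $R$ has minimal multiplicity and let $x_1, \dots, x_n$ be the minimal monomial generators of $\m$ written in strictly increasing degree, so $n = \mu(\m)$ and $x_1$ is the generator of smallest value. The hypotheses of Theorem \ref{introthm1} then apply to $I = \m$, and the strategy is to verify condition $(6)$, namely $(x_1 : \m) = \m$. The inclusion $\m \subseteq (x_1 : \m)$ reduces to $\m^2 \subseteq x_1 R$, which is forced by the minimal multiplicity identity $\m^2 = x_1 \m$ (valid in the numerical semigroup setting, since $x_1$ is a minimal reduction of $\m$). The reverse inclusion $(x_1 : \m) \subseteq \m$ holds because otherwise $(x_1 : \m) = R$, giving $\m \subseteq x_1 R$ and hence $\m = x_1 R$, contradicting that $R$ is not a DVR. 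Theorem \ref{introthm1} then provides, via condition $(7)$, the graded isomorphism $\Omega^1_R(\m) \cong \bigoplus_{i=2}^{n} \m(-|x_i|)$. Since $Z_1(\m)$ coincides with $\Omega^1_R(\m)$ (the Koszul differential on a minimal generating set of $\m$ is a minimal presentation) and $n = e(R)$ in the minimal multiplicity case, forgetting the grading yields the desired $Z_1(\m) \cong \m^{\oplus e(R)-1}$.

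For the converse, I would use a rank comparison. Since $R$ is a one-dimensional Cohen-Macaulay domain and $\m$ contains a non-zerodivisor, $\m$ has rank one, so $\m^{\oplus e(R)-1}$ has rank $e(R)-1$. On the other hand, $Z_1(\m)$ sits as the kernel of a surjection $R^{\mu(\m)} \twoheadrightarrow \m$ induced by any minimal generating set and therefore has rank $\mu(\m) - 1$. Equating these ranks forces $\mu(\m) = e(R)$, which by Abhyankar's inequality $\mu(\m) \le e(R) + \dim R - 1 = e(R)$ is precisely the condition defining minimal multiplicity for one-dimensional Cohen-Macaulay local rings.

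The main obstacle will already have been addressed in proving Theorem \ref{introthm1}; once that theorem is in hand, the derivation above is essentially immediate. The only ingredients beyond invoking Theorem \ref{introthm1} and the rank count are the clean reformulation of $(x_1 : \m) = \m$ via the minimal multiplicity equation and the identification of $Z_1(\m)$ with $\Omega^1_R(\m)$, both of which should be routine.
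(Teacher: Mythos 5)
Your proposal is correct and takes essentially the same approach as the paper: both directions route through Theorem \ref{generalideals} (= Theorem \ref{introthm1}) for the forward implication and use the rank comparison $\rank(Z_1(\m)) = \mu_R(\m)-1$ for the converse. The only cosmetic difference is that you verify condition $(6)$, $(x_1:\m)=\m$, directly from the minimal multiplicity identity $\m^2 = x_1\m$, whereas the paper verifies condition $(2)$, $\tr_R(\m)=\m$, by observing that $\m \subseteq \tr_R(\m)$ always holds with equality failing only when $R$ is a DVR.
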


We now briefly outline the structure of our paper. In Section \ref{background}, we provide some background and set notation used throughout the paper. In Section \ref{prelim}, we provide new tools ranging across various topics that are instrumental in the proofs of our main theorems: Section \ref{tracesection} overviews several different ways to calculate trace ideals, Section \ref{ulrichsection} provides several new results on Ulrich modules and also provides direct extensions of the work of \cite{DE21}, and Section \ref{numericalsemigroupsection} gives new results on syzygies and presentation matrices of homogeneous ideals in a numerical semigroup ring. Section \ref{main} contains the proofs of our main results: Theorems \ref{generalideals} and \ref{nearlygormainthm}. Finally, in Section \ref{counterex}, we prove Theorem \ref{introthm2} by providing an explicit extended example and comment on its minimality.

\section{Background and Notation}\label{background}

Throughout we let $(R,\m,k)$ be a Noetherian local ring or a positively graded algebra of dimension $d$ over the field $k$ with homogeneous maximal ideal $\m$. All modules are assumed to be finitely generated, and in the graded case are assumed to be homogeneous. For an $R$-module $M$, we write $\mu_R(M)$ for its minimal number of generators, and we write $\Omega^i_R(M)$ for the $i$th syzygy of $M$, taking $\Omega^0_R(M)=M$. We write $\beta^R_i(M):=\mu_R(\Omega^i_R(M))=\dim_k(\Tor^R_i(M,k))$, and we let $l_R(M)$ denote the length of the $R$-module $M$. We write $r(R):=\dim_k \Ext^{\depth(R)}_R(k,R)$ for the type of $R$. If $I$ is an $\m$-primary ideal in $R$, we let $e_R(I,M)$ denote the Hilbert-Samuel multiplicity of $M$ with respect to $I$, so $e_R(I,M):=\displaystyle \lim_{n \to \infty} \dfrac{d! l_R(M/I^nM)}{n^{d}}$. In the case $I=\m$, we simply write $e_R(M):=e_R(\m,M)$ and we frequently will drop the subscript $R$ for the case when $M=R$. In the graded setting, we let $H_M(t):=\sum_{i \in \Z} (\dim_k M_i)t^i$ denote the Hilbert series of the graded $R$-module $M$. We recall that when $R$ is standard graded, we may write $H_M(t)$ uniquely in the form $H_M(t)=\dfrac{\epsilon_M(t)}{(1-t)^{\dim(M)}}$ for $\epsilon_M(t) \in \Z[t,t^{-1}]$ with $\epsilon_M(1) \ne 0$, and that $\epsilon_M(1)=e_R(M)$ when $\dim(M)=d$ (see \cite[Corollary 4.1.8]{BH93}).

When $R$ is Cohen-Macaulay, we always have the so-called Abhyankar bound $e(R) \ge \mu_R(\m)-d+1$, and rings for which equality is obtained are said to have \emph{minimal multiplicity} \cite[(1)]{Ab67} (cf. \cite[Theorem 1.2]{Sa76}). We let $(-)^*:=\Hom_R(-,R)$, and if $R^{\oplus m} \xrightarrow{A} R^{\oplus n} \rightarrow M \rightarrow 0$ for $M$, we let $\Tr_R(M):=\coker(A^T)$, so there is an exact sequence of the form
\[0 \rightarrow M^* \rightarrow R^{\oplus n} \xrightarrow{A^T} R^{\oplus m} \rightarrow \Tr_R(M) \rightarrow 0.\]
\noindent When $R$ is Cohen-Macaulay with canonical module $\omega_R$, we write $(-)^\vee:=\Hom_R(-,\omega_R)$.
We let $\bar{R}$ denote the integral closure of $R$ and we let $\mathfrak{c}_R$ denote the conductor ideal of $R$, so $\mathfrak{c}_R:=(R:_{\bar{R}} \bar{R})$.


If $I$ is an ideal in $R$ with minimal generating set $(x_1,\dots,x_{\mu_R(I)})$, we let $H_i(I)$, $B_i(I)$, and $Z_i(I)$ denote the $i$th Koszul homology, boundary, and cycle respectively with respect to this generating set. We rely on context to dictate either a specific generating set that has been chosen or else that the choice of generating set is immaterial to the problem at hand. Note that it is well-known that $IH_i(I)=0$ for any $i$, so we always have $IZ_i(I) \subseteq B_i(I)$.  We write $\delta_1(I)$ for the so-called \emph{syzygetic} part of $I$, which we define to be $\delta_1(I):=Z_1(I) \cap IR^{\oplus \mu_R(I)}/B_1(I)$. It is well-known that $\delta_1(I)$ is isomorphic to the kernel $K$ of the natural surjection $S^2_R(I) \to I^2$, and that there is an exact sequence
\begin{equation*}\label{syzygeticexact}
0 \to \delta_1(I) \to H_1(I) \to (R/I)^{\oplus \mu_R(I)} \to I/I^2 \to 0,
\end{equation*}
see \cite{SV81} (cf. \cite[Paragraph after Proposition 5.19]{Va05}). 
We will make heavy use of the following:
\begin{prop}\label{thetorthing}
If $I$ is an ideal of positive grade in $R$, then there is a natural exact sequence
\[\mbox{$\bigwedge^2_R(I)$} \xrightarrow{i} \Tor^R_1(I,R/I) \rightarrow K \rightarrow 0\]
fitting into a commutative diagram:
\[\begin{tikzcd}
	& {\bigwedge^2_R(I)} & {\Tor^R_1(I,R/I)} & {K} & 0 \\
	0 & {B_1(I)/IZ_1(I)} & {\dfrac{Z_1(I) \cap IR^{\oplus \mu_R(I)}}{IZ_1(I)}} & {\delta_1(I)} & 0
	\arrow["i", from=1-2, to=1-3]
	\arrow["\alpha", from=1-2, to=2-2]
	\arrow[from=1-3, to=1-4]
	\arrow["\cong" {anchor=south, rotate=-90}, from=1-3, to=2-3]
	\arrow[from=1-4, to=1-5]
	\arrow["\cong" {anchor=south, rotate=-90},from=1-4, to=2-4]
	\arrow[from=2-1, to=2-2]
	\arrow[from=2-2, to=2-3]
	\arrow[from=2-3, to=2-4]
	\arrow[from=2-4, to=2-5]
\end{tikzcd}\]
Moreover, we have the following:
\begin{enumerate}
\item[$(1)$] If $\m \bigwedge^2_R(I)=0$ or $\chara k \ne 2$, then $i$ is injective and $\alpha$ is an isomorphism.
\item[$(2)$] If $\chara k \ne 2$, then the top and bottom rows are split exact.   
\item[$(3)$] If $\m \Tor^R_1(I,R/I)=0$, then there is a split exact sequence of the form
\[0 \to \textstyle{\bigwedge^2_R}(I) \otimes_R k \to \Tor^R_1(I,R/I) \to \delta_1(I) \to 0.\]
In particular, $\m \delta_1(I)=0$ and we have $\dim_k(\Tor^R_1(I,R/I))=\displaystyle {\mu_R(I) \choose 2}+\dim_k(\delta_1(I))$.
\end{enumerate}
\end{prop}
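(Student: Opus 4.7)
The plan is to build the diagram from two different computations of $\Tor^R_1(I, R/I)$, then identify $i$ with an antisymmetrization in $I \otimes_R I$ to handle the ``moreover'' statements.

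First I would tensor the presentation $0 \to Z_1(I) \to R^{\oplus n} \to I \to 0$ (with $n = \mu_R(I)$) by $R/I$. Since $R^{\oplus n}$ is free, this identifies $\Tor^R_1(I, R/I) \cong (Z_1(I) \cap IR^{\oplus n})/IZ_1(I)$, supplying the middle vertical isomorphism. The right vertical $K \cong \delta_1(I)$ is the Simis--Vasconcelos result \cite{SV81} (which uses positive grade), and the bottom row is tautological from $IZ_1(I) \subseteq B_1(I) \subseteq Z_1(I) \cap IR^{\oplus n}$. For $\alpha$, I would send $x \wedge y$ to $[x\tilde y - y\tilde x]$ for lifts $\tilde x, \tilde y \in R^{\oplus n}$; independence of lift (differences live in $IZ_1(I)$), bilinearity and vanishing on $x \wedge x$ are routine, so $\alpha$ descends to $\bigwedge^2_R(I)$. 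Surjectivity of $\alpha$ is immediate, and combined with the exactness of the bottom row forces exactness of the top.

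The key identification for the ``moreover'' statements uses the alternative computation $\Tor^R_1(I, R/I) \cong \ker(I \otimes_R I \xrightarrow{\mu} I^2)$ (from tensoring $0 \to I \to R \to R/I \to 0$ by $I$), under which $i$ becomes the antisymmetrization $x \wedge y \mapsto x \otimes y - y \otimes x$. If $\chara k \ne 2$, the decomposition $I \otimes_R I = S^2_R(I) \oplus \bigwedge^2_R(I)$ restricts to $\ker(\mu) = K \oplus \bigwedge^2_R(I)$, yielding both injectivity of $i$ in (1) and the splittings in (2). If $\m \bigwedge^2_R(I) = 0$, I would reduce modulo $\m$ in the square
\[
\begin{tikzcd}
\bigwedge^2_R(I) \arrow[r] \arrow[d] & I \otimes_R I \arrow[d] \\
\bigwedge^2_R(I) \otimes_R k \arrow[r] & (I \otimes_R I) \otimes_R k
\end{tikzcd}
\]
where the left vertical is an isomorphism by hypothesis and the bottom identifies with $\bigwedge^2_k(I/\m I) \to (I/\m I) \otimes_k (I/\m I)$; this latter map is injective in every characteristic because polarization shows $\langle a \otimes a\rangle$ in $T^2$ of a $k$-vector space exhausts the symmetric tensors, which is the kernel of antisymmetrization. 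A diagram chase then yields injectivity of the top arrow, hence of $\alpha$.

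For (3), the hypothesis $\m \Tor^R_1(I, R/I) = 0$ forces $\m \cdot \im(\alpha) = 0$ and hence $\m \bigwedge^2_R(I) \subseteq \ker(\alpha)$, so $\alpha$ factors as $\bigwedge^2_R(I) \twoheadrightarrow \bigwedge^2_R(I) \otimes_R k \to B_1(I)/IZ_1(I)$; rerunning the base-change diagram of the previous paragraph shows the last map is injective, producing the claimed short exact sequence, which then splits as a sequence of $k$-vector spaces. The vanishing $\m \delta_1(I) = 0$ is inherited by $\delta_1(I)$ as a quotient of $\Tor^R_1(I, R/I)$, and the dimension count follows since $\bigwedge^2_R(I) \otimes_R k$ has $k$-dimension $\binom{\mu_R(I)}{2}$. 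The main obstacle I anticipate is the $\m \bigwedge^2_R(I) = 0$ case of (1): one must carefully combine the base-change compatibility of $\bigwedge^2$ with the (slightly delicate, in characteristic two) fact that antisymmetrization of a $k$-vector space is always injective.
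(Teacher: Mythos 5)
Your proof is correct and follows essentially the same route as the paper: identifying $\Tor^R_1(I,R/I)$ with $\ker(I\otimes_R I\to I^2)$, realizing $i$ as the restriction of the antisymmetrization $\iota_I$, and reducing modulo $\m$ so that injectivity comes down to the fact that $\bigwedge^2_k V \to V\otimes_k V$ is injective over a field in all characteristics. The only cosmetic differences are that you build $\alpha$ by hand where the paper cites Simis--Vasconcelos for the diagram, and in (3) you phrase the key step as factoring $\alpha$ through $\bigwedge^2_R(I)\otimes_R k$ rather than counting minimal generators of $\im(\iota_I)$; these are the same argument in different words.
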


\begin{proof}

We note one may identify $\Tor^R_1(I,R/I)$ with the kernel of the natural surjection $p:I \otimes_R I \to I^2$. For any $R$-module $M$, we let $\iota_M:\bigwedge^2_R(M) \to M \otimes_R M$ be the natural \emph{antisymmetrization map} given on elementary wedges by $x \wedge y \mapsto x \otimes y-y \otimes x$. The map $i$ is the restriction of $\iota_I$, noting that $\iota_I$ maps into the kernel of $p$, and existence of the diagram follows from \cite[Proposition 1.1 and Corollary 1.2]{SV81}.

For claims $(1)-(2)$, first suppose $\m \bigwedge^2_R(I)=0$. Then there is a commutative diagram:
\[\begin{tikzcd}[cramped]
	{\bigwedge^2_R(I)} & {I \otimes_R I} \\
	{\bigwedge^2_k(I/\m I)} & {I/\m I \otimes_k I/\m I}
	\arrow["{\iota_I}", from=1-1, to=1-2]
	\arrow["\cong" {anchor=south, rotate=-90},from=1-1, to=2-1]
	\arrow[from=1-2, to=2-2]
	\arrow["{\iota_{I/\m I}}", from=2-1, to=2-2]
\end{tikzcd}\]
whose vertical arrows are the natural maps. Over a field, it is well-known that $\iota_M$ is injective for every $M$ (see e.g. \cite[Lemma 5.9]{CC23}), so in particular $\iota_{I/\m I}$ is injective. Commutativity of the diagram then forces $\iota_I$ to be injective. But as $i$ is a restriction of $\iota_I$, it is injective as well. That $\alpha$ is an isomorphism now follows from the snake lemma. 

If instead, $\chara k \ne 2$, then $i$ is split injective by \cite[Remark after Proposition 1.1]{SV81}, and the snake lemma again forces $\alpha$ to be an isomorphism. But the the diagram gives a chain isomorphism, so the bottom row splits as well. 

For claim $(3)$, as $\im(\iota_I) \hookrightarrow \Tor^R_1(I,R/I)$, we have $\m \im(\iota_I)=0$, and it suffices to show $\mu_R(\bigwedge^2_R(I))=\mu_R(\iota_I)$, equivalently that no minimal generator of $\bigwedge^2_R(I)$ is contained in $\ker(\iota_I)$. If $x_1,\dots,x_n$ is a minimal generating set for $I$, then $\{x_i \wedge x_j \mid i<j\}$ is a minimal generating set for $\bigwedge^2_R(I)$. But $\{x_i \otimes x_j \mid 1 \le i,j \le n\}$ is a minimal generating set for $I \otimes_R I$, so $\iota_M(x_i \wedge x_j)=x_i \otimes x_j-x_j \otimes x_i \ne 0$ if $i<j$. Thus $\mu_R(\im(\iota_M))=\mu_R(\bigwedge^2_R(I))$, completing the proof.
\end{proof}

It is standard practice in this theory to identify $K$ and $\delta_1(I)$ through the isomorphism discussed in Proposition \ref{thetorthing}. Henceforth, we will follow this convention as well.

\section{Preliminary Results}\label{prelim}

In this section we provide results of independent interest on several different topics that will be instrumental in the proofs of our main theorems.

\subsection{Trace Ideals}\label{tracesection}

In this section we recall the notion of trace ideals and several facts we will need in later sections.

\begin{defn}\label{tracedef}

Consider the map $\theta_R^M:M^* \otimes_R M \to R$ given by $f \otimes x \mapsto f(x)$. The image of $\theta$ is an ideal of $R$ called the \emph{trace ideal} of $M$, denoted $\tr_R(M)$. 

\end{defn}

For ease of reference, we summarize some well-known properties of trace ideals each of which will be used either explicitly or implicitly throughout.

\begin{prop}[{\cite[Proposition 2.8]{Li17}}]\label{basictraceprop}
For any $R$, any $R$-modules $M,N$, and any ideal $R$, we have the following:
\begin{enumerate}

\item[$(1)$] $\tr_R(M \oplus N)=\tr_R(M)+\tr_R(N)$.
\item[$(2)$] $\tr_R(M)=R$ if and only if $R$ is a direct summand of $M$.
\item[$(3)$] $I \subseteq \tr_R(I)$ with equality if and only if $I=\tr_R(M)$ for some $R$-module $M$.
\item[$(4)$] If $S$ is a flat $R$-algebra, then $\tr_S(M \otimes_R S)=\tr_R(M)S$. In particular, taking trace ideals respects localization and completion.

\end{enumerate}
\end{prop}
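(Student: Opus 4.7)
The plan is to verify each claim directly from the definition $\tr_R(M) = \im(\theta_R^M)$. A useful preliminary reduction is that every element of $\tr_R(M)$ has the form $\sum_{i=1}^n f_i(x_i)$ with $f_i \in M^*$ and $x_i \in M$ (no external scalar coefficients), since $r \cdot f \in M^*$ for any $r \in R$. Claim $(1)$ then follows from the canonical splitting $(M \oplus N)^* \cong M^* \oplus N^*$, under which the evaluation decomposes componentwise to give $\tr_R(M \oplus N) = \tr_R(M) + \tr_R(N)$.

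For $(2)$, one direction is immediate: a splitting $\iota \colon R \hookrightarrow M$ with retraction $\pi \colon M \to R$ yields $\pi(\iota(1)) = 1 \in \tr_R(M)$. For the converse, write $1 = \sum_{i=1}^n f_i(x_i)$. Since $R$ is local and $(f_1(x_1), \ldots, f_n(x_n)) = R$, at least one $f_j(x_j)$ must be a unit $u$ (otherwise every generator lies in $\m$, forcing the ideal into $\m$); then $g := u^{-1} f_j \in M^*$ satisfies $g(x_j) = 1$, so the map $R \to M$ sending $1 \mapsto x_j$ is split by $g$. The graded case is identical after applying graded Nakayama to homogeneous pieces. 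For $(3)$, the identity $\id_I \in \Hom_R(I, R)$ shows $I \subseteq \tr_R(I)$; conversely, if $I = \tr_R(M)$, then every $f \in M^*$ factors as $M \to I \hookrightarrow R$, so for any $g \in \Hom_R(I, R)$ and $y = \sum f_i(x_i) \in I$, we have $g(y) = \sum (g \circ f_i)(x_i) \in \tr_R(M) = I$, proving $\tr_R(I) \subseteq I$. For $(4)$, the flat base change isomorphism $\Hom_R(M, R) \otimes_R S \cong \Hom_S(M \otimes_R S, S)$ (valid for finitely presented $M$ and flat $S$) shows that the trace map base-changes to the trace map over $S$, and its image is $\tr_R(M) \cdot S$ by right-exactness of tensor, yielding $\tr_S(M \otimes_R S) = \tr_R(M) S$.

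The main technical point is the converse in $(2)$: without the local (or graded-local) hypothesis, the same argument only produces $R$ as a summand of $M^{\oplus n}$, and it is precisely the local structure that allows one to single out a unit value $f_j(x_j)$ and promote this to a summand of $M$ itself. Everything else is a routine consequence of Hom--tensor adjunction and the definition.
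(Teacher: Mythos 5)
Your proof is correct, and all four verifications are sound. The paper does not prove this statement itself — it simply cites \cite[Proposition 2.8]{Li17} — so there is no in-paper argument to compare against, but your direct verifications from the definition are exactly the standard ones: $(1)$ via $(M\oplus N)^*\cong M^*\oplus N^*$; $(2)$ using that in the local (or graded-local) setting some $f_j(x_j)$ must be a unit, which splits $r\mapsto rx_j$; $(3)$ via the inclusion $I\hookrightarrow R$ as a functional and the factorization of any $f\in M^*$ through $\tr_R(M)$; and $(4)$ via flat base change of $\Hom$ for finitely presented modules together with right-exactness of $-\otimes_R S$. Two small remarks: in $(3)$ you only explicitly prove the implication ``$I=\tr_R(M)\Rightarrow I=\tr_R(I)$'' (the converse is trivial, take $M=I$, but worth one line); and your closing observation about the role of the local hypothesis in $(2)$ is well-taken — over a Dedekind domain, a non-principal invertible ideal $I$ has $\tr_R(I)=R$ without $R$ being a summand, so the implication genuinely fails without $(R,\m)$ local.
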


Our next Proposition gives several ways in which the trace ideal of $M$ can be calculated. Prior to presenting it, we will need the following elementary lemma.

\begin{lem}\label{flip}

Suppose $I$ is an ideal of $R$ and that $x,y \in I$ are nonzerodivisors on $R$. Then $x(y:I)=y(x:I)$. 

\end{lem}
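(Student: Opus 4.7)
The statement is a symmetric identity of ideals, so the plan is to prove a single containment $x(y:I) \subseteq y(x:I)$ and then invoke symmetry in $x$ and $y$. All the work is to produce, from a generator $xr$ of the left-hand side, a witness $s \in (x:I)$ such that $xr = ys$.

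Concretely, I would start with $r \in (y:I)$, which by definition means $rI \subseteq (y)$. Since $x \in I$, we may in particular write $rx = sy$ for some $s \in R$; this $s$ is the natural candidate. The task is then to verify $s \in (x:I)$, i.e.\ $sI \subseteq (x)$. For this I would pick an arbitrary $z \in I$ and compute the product $rxz$ two ways: using $rx = sy$ gives $rxz = syz$, while using $rz \in (y)$ (say $rz = ty$ for some $t \in R$) and multiplying by $x$ gives $rxz = txy$. Equating the two expressions produces $y(sz - tx) = 0$, at which point the hypothesis that $y$ is a nonzerodivisor on $R$ forces $sz = tx \in (x)$. Hence $sI \subseteq (x)$, so $s \in (x:I)$ and $xr = ys \in y(x:I)$, proving $x(y:I) \subseteq y(x:I)$. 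Interchanging the roles of $x$ and $y$ (using that $x$ is also a nonzerodivisor) gives the reverse containment.

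The only step that even resembles an obstacle is the final use of the nonzerodivisor hypothesis, which is why both $x$ and $y$ (rather than just one of them) need to be nonzerodivisors: one of them is used to cancel in the construction of $s$ and the other is used when running the argument in the opposite direction to obtain the reverse inclusion. No further structural hypotheses (Cohen--Macaulayness, graded setting, etc.) are needed, which is consistent with the lemma being stated for an arbitrary Noetherian ring $R$ and with its intended use as a purely formal manipulation underlying the trace-ideal computations of Section~\ref{tracesection}.
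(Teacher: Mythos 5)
Your proof is correct and is essentially the same argument as the paper's: reduce to one containment by symmetry, use $x \in I$ to produce $s$ with $rx = sy$, then verify $s \in (x:I)$ by computing $rxz$ two ways and canceling the nonzerodivisor $y$. (One small quibble with your closing commentary: within a single direction of the containment only \emph{one} of the two nonzerodivisor hypotheses is actually invoked — $y$ being a nonzerodivisor is used in the cancellation $y(sz-tx)=0 \Rightarrow sz=tx$, not in the ``construction of $s$'', which needs only $rx \in (y)$ — and $x$ being a nonzerodivisor is needed only for the symmetric direction; but this does not affect the validity of the proof.)
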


\begin{proof}

By symmetry, it suffices to show that $x(y:I) \subseteq y(x:I)$. Pick $a \in (y:I)$. Then as $y$ is a nonzerodivsor, there is a unique $a' \in R$ with $ax=a'y$. We claim that $a' \in (x:I)$, from which the claim will follow. Indeed, if $b \in I$, then again as $y$ is a nonzerodivisor there is a unique $b'$ with $ab=b'y$ and we have
\[axb=a'yb \Rightarrow b'xy=a'yb \Rightarrow b'x=a'y.\]
It follows that $a' \in (x:I)$, completing the proof. 
\end{proof}


\begin{prop}\label{traceprop}

The following statements hold:

\begin{enumerate}

\item[$(1)$] Let $A$ be a minimal presentation matrix for $M$ and let $B$ be a presentation matrix for $\im(A^T)$. Then $\tr_R(M)=I_1(B)$, where the latter is the ideal generated by the entries of $B$.

\item[$(2)$] Let $I$ be an ideal of positive grade and let $x \in I$ be a nonzerodivisor on $R$. Then $\tr_R(I)=(I(x:I):x)$. 

\item[$(3)$] Let $I$ be an ideal of positive grade and let $x_1,\dots,x_n$ be a minimal generating set for $I$ where each $x_i$ is a nonzerodivisor on $R$ (such a generating set exists by prime avoidance). Then $\tr_R(I)=\sum_{i=1}^n (x_i:I)$. 

\item[$(4)$] If $I$ is an ideal of positive grade, then $\tr_R(I)$ is an $\End_R(I)$-ideal and $\Tor^R_1(\Tr_R(I),I) \cong \End_R(I)/\tr_R(I)$. In particular, $\tr_R(I)=\Ann_R(\Tor^R_1(\Tr_R(I),I))$
\end{enumerate}

\end{prop}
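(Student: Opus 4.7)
The plan is to prove (1) by unwinding the definition of $\tr_R(M)$ via the embedding $M^* \hookrightarrow R^n$, to use the fractional-ideal identification $I^* \cong (R :_{\Frac(R)} I)$ as a unified engine for (2) and (3), and to analyze (4) via the $\Tor$ long exact sequence of a presentation of $\Tr_R(I)$.

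For (1), every $\phi \in M^* = \ker(A^T) \subseteq R^n$ corresponds to the map $M \to R$ sending the generator $\bar{e}_i$ of $M$ to its $i$th entry, so $\tr_R(M)$ equals $I_1(M^*)$, the ideal generated by the entries of the vectors in $M^* \subseteq R^n$. Since $B$ presents $\im(A^T) \cong R^n/M^*$, its columns span $M^* \subseteq R^n$, so $I_1(B) = I_1(M^*) = \tr_R(M)$.

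For (2) and (3), the standard isomorphism $I^* \xrightarrow{\sim} (R :_{\Frac(R)} I)$ sending $\phi \mapsto \phi(x)/x$ (independent of the choice of nonzerodivisor $x \in I$) converts the evaluation map $\theta$ into ordinary multiplication inside $\Frac(R)$, so $\tr_R(I) = I \cdot (R:I)$ as fractional ideals. One also checks directly that $(x:I) = x(R:I)$ for any nonzerodivisor $x \in I$. For (2), this gives $x\tr_R(I) = I \cdot x(R:I) = I(x:I)$, and dividing by the nonzerodivisor $x$ yields $\tr_R(I) = (I(x:I) : x)$. For (3), $\tr_R(I) = I(R:I) = \sum_i x_i(R:I) = \sum_i (x_i:I)$.

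For (4), the first claim follows from the fractional-ideal identifications $\End_R(I) \cong (I:I)$ and $\tr_R(I) = I(R:I)$, which give $(I:I)\cdot I(R:I) \subseteq I(R:I)$. For the isomorphism, break $0 \to I^* \to R^n \xrightarrow{A^T} R^m \to \Tr_R(I) \to 0$ into short exact sequences and tensor with $I$. Freeness of $R^m$ yields $\Tor_1^R(\Tr_R(I), I) = \ker(L \otimes I \to I^m)$ where $L = \im(A^T)$, and combining with the short exact sequence $0 \to I^* \to R^n \to L \to 0$ (tensored with $I$) identifies this with $\ker(A^T \otimes 1_I : I^n \to I^m) / \im(I^* \otimes I \to I^n)$. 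The kernel equals $I^* \cap I^n$, which coincides with $\End_R(I)$, viewed as those $\phi \in I^*$ with $\phi(I) \subseteq I$. The image is the $R$-span of $\{xf : x \in I,\ f \in I^*\}$, which lands inside $\End_R(I)$; under the fractional-ideal identification, $xf$ corresponds to $f(x) \in R$, so the image equals $\tr_R(I) \subseteq R \hookrightarrow \End_R(I)$. This gives $\Tor_1^R(\Tr_R(I), I) \cong \End_R(I)/\tr_R(I)$. The annihilator identity follows at once: $\tr_R(I) \subseteq \Ann_R(\End_R(I)/\tr_R(I))$ because $\tr_R(I)$ is an $\End_R(I)$-ideal, and the reverse inclusion comes from evaluating the annihilator on $1 \in \End_R(I)$.

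The main obstacle is the compatibility step in (4): verifying that the image of $I^* \otimes I \to I^n$ lies inside the copy of $\End_R(I)$ sitting inside $I^*$, and that under the fractional-ideal picture this image coincides with $\tr_R(I) \subseteq R \hookrightarrow \End_R(I)$. Once these identifications are pinned down, the remaining diagram chase is routine.
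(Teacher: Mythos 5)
Your proof is correct, and it follows a genuinely different route from the paper. The paper handles this proposition almost entirely by citation: item (1) is attributed to \cite[Remark 2.5]{Li17}, item (2) to \cite[Proposition 2.4]{KT19-2}, item (3) is deduced from (2) via the preliminary Lemma on $x(y:I)=y(x:I)$, and for (4) the key exact sequence $I^*\otimes_R I\to\End_R(I)\to\Tor^R_1(\Tr_R(I),I)\to 0$ is quoted from \cite[Proposition 12.9]{LW12}. You instead give a self-contained development. For (1), you unwind $M^*=\ker(A^T)\subseteq R^n$ directly; you are implicitly (and correctly) reading ``presentation matrix of $\im(A^T)$'' as the matrix of relations for the particular surjection $R^n\twoheadrightarrow\im(A^T)$ coming from $A^T$ --- any other reading would make the statement false (e.g.\ for $M=R\oplus k$), and this is what the cited reference means. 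For (2)--(3), your fractional-ideal machine $I^*\cong(R:_{\Frac(R)}I)$, $(x:I)=x(R:I)$, $\tr_R(I)=I\cdot(R:I)$ proves both statements in one stroke, bypassing the paper's separate Lemma on colon-ideal symmetry; this is cleaner and arguably more illuminating. For (4), you reconstruct the exact sequence from scratch by splitting $0\to I^*\to R^n\to R^m\to\Tr_R(I)\to 0$ into two short exact sequences and tensoring, and the bookkeeping checks out: $\ker(A^T\otimes 1_I)=I^*\cap I^n=\End_R(I)$, the image $\im(I^*\otimes I\to I^n)$ consists of elements $y\cdot\phi$ with $y\in I$, $\phi\in I^*$, which under the fractional-ideal identification correspond to multiplication by $\phi(y)\in\tr_R(I)$, and the inclusion $\im(I^*\otimes I)\subseteq\ker(A^T\otimes 1_I)$ is what lets you pass to the quotient. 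Your approach costs a bit more space but removes reliance on three external references and makes the proof transparent.
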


\begin{proof}

Items $(1)$ and $(2)$ follow from \cite[Remark 2.5]{Li17} and \cite[Proposition 2.4]{KT19-2} respectively. For $(3)$, we have from item $(2)$, that $x_1 \tr_R(I)=(x_1:I)I$, and we observe that 
\[(x_1:I)I=(x_1:I)\sum_{i=1}^n (x_i)=\sum_{i=1}^n x_i(x_1:I).\]
By Lemma \ref{flip} this equals $\sum_{i=1}^n x_1(x_i:I)=x_1 \sum_{i=1}^n (x_i:I)$. So $x_1 \tr_R(I)=x_1 \sum_{i=1}^n (x_i:I)$. As $x_1$ is a nonzerodivisor, we have $\tr_R(I)=\sum_{i=1}^n (x_i:I)$, establishing $(3)$. For $(4)$, it is well-known (see \cite[Exercise 4.3.1]{LW12}) that $\End_R(I)$ is commutative and that there is an exact sequence 
\[I^* \otimes_R I \xrightarrow{\theta} \End_R(I) \rightarrow \Tor^R_1(\Tr_R(I),I) \rightarrow 0\]
of $\End_R(I)$-modules where $\theta$ is given on elementary tensors by $f \otimes x \mapsto \alpha_{f,x}$ with $\alpha_{f,x}(y)=f(y)x=f(x)y$ (see \cite[Proposition 12.9]{LW12}. In other words, $\alpha_{f,x}$ is the multiplication map by $f(x)$ on $I$ in this setting. Defining $\beta:\tr_R(I) \to \End_R(I)$ by sending $x$ to its corresponding multiplication map, we see that $\beta$ restricts to an isomorphism $\tr_R(I) \to \im(\theta)$, which identifies $\tr_R(I)$ as an ideal of $\End_R(I)$, and shows $\Tor^R_1(\Tr_R(I),R) \cong \End_R(I)/\tr_R(I)$ under this identification. In particular, $r \in R$ annihilates $\Tor^R_1(\Tr_R(I),I)$ if and only if $r \id_I \subseteq \tr_R(I)$, i.e., if $r \in \tr_R(I)$, completing the proof.

\end{proof}

\subsection{Ulrich Modules}\label{ulrichsection}

The following has been a key object of study in recent years.

\begin{defn}

An $R$-module $M$ is said to be \emph{Ulrich} if it is maximal Cohen-Macaulay and $\mu_R(M)=e_R(M)$. 

\end{defn}

\begin{remark}

After extending the residue field, if needed, a maximal Cohen-Macaulay module $M$ is Ulrich if and only if $\underline{x}M=\m M$ for a minimal reduction $\underline{x}$ of $\m$.

\end{remark}

In \cite{Ul84}, Ulrich asked whether every Cohen-Macaulay local ring has an Ulrich module. Recent work of \cite{IM24} shows that $R$ need not admit an Ulrich module even if $R$ is a complete intersection domain of dimension $2$ or a Gorenstein normal domain of dimension $2$. However, there are several cases where Ulrich modules are known to exist, and they are a powerful tool to have access to when they do. Two cases where they are known to exist abundantly are when $R$ is a Cohen-Macaulay local ring of dimension $1$, where any sufficiently high power of $\m$ is Ulrich \cite[Theorem 2.5]{SV74}, and the case where $R$ is Cohen-Macaulay and has minimal multiplicity, where $\Omega^1_R(M)$ is Ulrich for any maximal Cohen-Macaulay $R$-module $M$ provided $\Omega^1_R(M)$ has no free summand \cite[Theorem B]{KT19}. For our purposes we will need to develop the properties of Ulrich modules in these settings a bit further than what is known in the literature. Before we present our results in this direction, we will need to set some notation and provide several preparatory results of independent interest. These results are aimed at controlling when certain modules can have nonzero free summands, in light of the above.

We let $\mod(R)$ be the category of finitely generated $R$-modules. We recall several full subcategories of $\mod(R)$ introduced by \cite{DE21}: the category $\Deep(R)$ consists of $M \in \mod(R)$ with $\depth(M) \ge \depth(R)$, $\Omega \Deep(R)$ consists of $M \in \mod(R)$ fitting into an exact sequence $0 \to M \to F \to X \to 0$ where $F$ is free and $X \in \Deep(R)$, and $\DF(R)$ (standing for deeply faithful) consists of $M \in \mod(R)$ for which there is an exact sequence $0 \to R \to M^{\oplus n} \to X \to 0$ with $X \in \Deep(R)$. 



\begin{lem}\label{notminsyz}
Suppose $M \in \Omega \Deep(R)$ so that there is an exact sequence $0 \rightarrow M \xrightarrow{i} R^{\oplus n} \rightarrow X \rightarrow 0$ for some $n$ and $X \in \Deep(R)$. If $M \in \DF(R)$, then $M$ is not a minimal syzygy of $X$, i.e., $\im(i) \nsubseteq \m R^{\oplus n}$.  
\end{lem}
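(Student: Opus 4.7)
The plan is to argue by contradiction: assume $\im(i) \subseteq \m R^{\oplus n}$, and reduce to the case $\depth(R) = 0$, where a quick socle argument produces the contradiction.

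First, unpack the $\DF(R)$ hypothesis to get an exact sequence $0 \to R \to M^{\oplus m} \to Y \to 0$ with $Y \in \Deep(R)$. Setting $t := \depth(R)$, I would use prime avoidance and the hypotheses $X, Y \in \Deep(R)$ (which guarantee $\depth X, \depth Y \ge t$) to choose a sequence $\underline{x} = x_1,\dots,x_t \in \m$ that is simultaneously regular on $R$, $X$, and $Y$. Since $M \hookrightarrow R^{\oplus n}$, such a sequence is automatically regular on $M$ (and hence on $M^{\oplus m}$). Reducing modulo $\underline{x}$, both short exact sequences stay exact, since the Koszul-homology computation of $\Tor^R_*(-, R/(\underline{x}))$ gives $\Tor^R_1(X, R/(\underline{x})) = \Tor^R_1(Y, R/(\underline{x})) = 0$. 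Write $\bar{R} := R/(\underline{x})$ and $\bar{\m}$ for its maximal ideal; then the assumption $\im(i) \subseteq \m R^{\oplus n}$ descends to $\im(\bar{i}) \subseteq \bar{\m}\bar{R}^{\oplus n}$, and we still have an injection $\bar{R} \hookrightarrow \bar{M}^{\oplus m}$.

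Since $\depth(\bar{R}) = 0$, there is a nonzero socle element $\bar{s} \in \bar{R}$, characterized by $\bar{s}\bar{\m} = 0$. Suppose the injection $\bar{R} \hookrightarrow \bar{M}^{\oplus m}$ sends $1 \mapsto (\bar{m}_1,\dots,\bar{m}_m)$. The image of $\bar{s}$ is then $(\bar{s}\bar{m}_1,\dots,\bar{s}\bar{m}_m)$, which must be nonzero by injectivity, so $\bar{s}\bar{m}_j \ne 0$ for some $j$. On the other hand, $\bar{i}(\bar{m}_j) \in \bar{\m}\bar{R}^{\oplus n}$ forces $\bar{i}(\bar{s}\bar{m}_j) = \bar{s}\bar{i}(\bar{m}_j) \in \bar{s}\bar{\m}\bar{R}^{\oplus n} = 0$, and injectivity of $\bar{i}$ then yields $\bar{s}\bar{m}_j = 0$, a contradiction.

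The main obstacle is the reduction step: one must confirm that a \emph{single} regular sequence of length $t$ can be chosen to preserve both short exact sequences at once and to produce a depth-zero quotient. This is exactly the role the $\Deep(R)$ conditions on $X$ and $Y$ play. Once that reduction is in place, the rest is an essentially tautological incompatibility: being contained in $\bar{\m}\bar{R}^{\oplus n}$ forces annihilation by the socle, but deep faithfulness prevents $\bar{M}$ from being annihilated by any nonzero element of $\bar{R}$.
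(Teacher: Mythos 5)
Your proof is correct, and it takes a genuinely different route from the paper's. The paper cites \cite[Lemma 2.10]{DE21} as a black box to conclude that $R$ is a direct summand of $M$, then constructs a commutative diagram, applies the snake lemma and the depth lemma to the resulting exact sequence $0 \to N \to C \to X \to 0$, invokes Auslander--Buchsbaum to force $C$ free, and concludes that $i\circ j$ is split. Your argument avoids \cite[Lemma 2.10]{DE21} entirely: you push the problem down to depth zero by a simultaneous regular sequence (valid because $X, Y \in \Deep(R)$ and $M \subseteq R^{\oplus n}$), and once you are there the incompatibility between $\im(\bar i) \subseteq \bar\m\bar R^{\oplus n}$ and the injectivity of $\bar R \hookrightarrow \bar M^{\oplus m}$ is a one-line socle computation. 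The trade-off: your argument is more elementary and self-contained (it even reproves the relevant fragment of \cite[Lemma 2.10]{DE21} along the way), whereas the paper's argument yields more structural information — namely that $R$ splits off $M$ via $j$ and that $i\circ j$ is a split inclusion — which is not needed for the stated conclusion but is reused in the proof of Theorem \ref{dualsummand}. Two small points to be fully rigorous in your write-up: (i) in the graded or finite-residue-field case, the prime-avoidance step may require a faithfully flat extension of scalars to produce the common regular sequence, and (ii) the exactness of the reduced sequences should be justified via $\Tor_1^R(X, R/(\underline x)) = 0$ and $\Tor_1^R(Y, R/(\underline x)) = 0$, which you correctly note follows from Koszul-homology rigidity once $\underline x$ is $R$-, $X$-, and $Y$-regular.
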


\begin{proof}

As $M \in \Omega \Deep(R) \cap \DF(R)$, it follows from \cite[Lemma 2.10]{DE21} that $R$ is a summand of $M$, so there is a split exact sequence $0 \rightarrow R \xrightarrow{j} M \rightarrow N \rightarrow 0$. In particular, we note that $N \in \Deep(R)$ since $M$ is. We have a commutative diagram of the form:
\[\begin{tikzcd}
	0 & R & M & N & 0 \\
	0 & R & {R^{\oplus n}} & {C} & 0
	\arrow[from=1-1, to=1-2]
	\arrow["j", from=1-2, to=1-3]
	\arrow[equals, from=1-2, to=2-2]
	\arrow[from=1-3, to=1-4]
	\arrow["i", from=1-3, to=2-3]
	\arrow[from=1-4, to=1-5]
	\arrow[dashed, from=1-4, to=2-4]
	\arrow[from=2-1, to=2-2]
	\arrow["{i \circ j}", from=2-2, to=2-3]
	\arrow[from=2-3, to=2-4]
	\arrow[from=2-4, to=2-5]
\end{tikzcd}\]
where $C:=\coker(i \circ j)$.

Applying the snake lemma to this diagram, we obtain an exact sequence of the form
\[0 \to N \to C \to X \to 0.\]
As $N,X \in \Deep(R)$, it follows from the depth lemma that $C \in \Deep(R)$. But we evidently have $\pd_R(C) \le 1$, so the Auslander-Buchsbaum formula forces $C$ to be free. It follows that $i \circ j$ is a split injection, so in particular $\im(i)$ must contain a minimal generator of $R^{\oplus n}$, i.e., $\im(i) \nsubseteq \m R^{\oplus n}$, so $M$ is not a minimal syzygy of $X$.  
\end{proof}

\begin{theorem}\label{dualsummand}

Suppose $R$ has depth $t$. If $\Ext^i_R(M,R)=0$ for all $1 \le i \le t-1$ and $M^* \in \DF(R)$, then $R$ is a summand of $M$.

\end{theorem}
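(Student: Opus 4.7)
The plan is to reduce the statement to a direct application of Lemma \ref{notminsyz} with $M^*$ playing the role of the module there, and then to extract a surjection $M \twoheadrightarrow R$ from the ``non-minimal-syzygy'' conclusion. To do this I need to verify that $M^* \in \Omega\Deep(R)$; the $\DF(R)$ half is given by hypothesis.

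The first step is to take a minimal free resolution $\cdots \to F_1 \to F_0 \to M \to 0$ and dualize to the complex
\[
0 \to M^* \to F_0^* \to F_1^* \to F_2^* \to \cdots .
\]
The hypothesis $\Ext^i_R(M,R)=0$ for $1 \le i \le t-1$ says this complex is exact through position $t-1$. Writing $\mathrm{Im}_i := \im(F_{i-1}^* \to F_i^*)$ for $i \ge 1$ and $\mathrm{Im}_0 := M^*$, I obtain short exact sequences $0 \to \mathrm{Im}_i \to F_i^* \to \mathrm{Im}_{i+1} \to 0$ for $0 \le i \le t-1$, with $\mathrm{Im}_t$ a submodule of the free module $F_t^*$. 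Since a submodule of a free module is torsionless, $\depth(\mathrm{Im}_t) \ge \min(\depth R, 1)$, and iterating the depth lemma along the chain of short exact sequences gives $\depth(\mathrm{Im}_j) \ge \min(t, t-j+1)$. In particular $\depth(\mathrm{Im}_1) \ge t$, so $\mathrm{Im}_1 \in \Deep(R)$ and the sequence $0 \to M^* \to F_0^* \to \mathrm{Im}_1 \to 0$ witnesses $M^* \in \Omega\Deep(R)$. The degenerate cases $t=0,1$ also fall out of this picture since duals are always torsionless.

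Now with $M^* \in \Omega\Deep(R) \cap \DF(R)$, Lemma \ref{notminsyz} applied to the sequence $0 \to M^* \to F_0^* \to \mathrm{Im}_1 \to 0$ tells us that the image of the inclusion $M^* \hookrightarrow F_0^*$ is \emph{not} contained in $\m F_0^*$. Identifying $F_0^* \cong R^{\oplus b}$ via a minimal generating set $e_1,\dots,e_b$ of $M$, the inclusion is $f \mapsto (f(e_1),\dots,f(e_b))$. The non-containment therefore produces $f \in M^*$ and an index $i$ with $f(e_i) \notin \m$, i.e., a unit. Then $f\colon M \to R$ is surjective, and since $R$ is free the resulting short exact sequence $0 \to \ker f \to M \to R \to 0$ splits, exhibiting $R$ as a direct summand of $M$.

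The main obstacle here is the depth estimate leading to $\mathrm{Im}_1 \in \Deep(R)$; after that, invoking Lemma \ref{notminsyz} and translating ``image not in $\m F_0^*$'' into the existence of a coordinate function $f(e_i)$ that is a unit is a short coordinate-level unpacking.
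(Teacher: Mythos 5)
Your proof is correct and follows essentially the same route as the paper's: dualize a minimal free resolution, use the $\Ext$-vanishing to get exactness, apply the depth lemma to place $\im(\partial_0^T) \in \Deep(R)$, and invoke Lemma \ref{notminsyz}. Your final step is a slightly cleaner presentation of the same idea---you extract a surjection $f\colon M \to R$ directly from the fact that the image of $M^*$ in $F_0^*$ meets the complement of $\m F_0^*$, whereas the paper reaches the identical conclusion via a block-reduction of the matrix $B$ lifting the inclusion $M^* \hookrightarrow F_0^*$ to a free cover.
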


\begin{proof}

Let 
\[\cdots \rightarrow R^{\oplus \beta^R_{i+1}(M)} \xrightarrow{\partial_{i}} R^{\oplus \beta^R_{i}(M)} \xrightarrow{\partial_{i-1}} \cdots \xrightarrow{\partial_1} R^{\oplus \beta^R_1(M)} \xrightarrow{\partial_0} R^{\oplus \beta^R_0(M)} \rightarrow M \rightarrow 0\]
be a minimal free resolution of $M$. Applying $(-)^*$, we have, since $\Ext^i_R(M,R)=0$ for all $1 \le i \le t-1$, an exact sequence of the form
\[0 \rightarrow M^* \xrightarrow{i} 
 R^{\oplus \beta^R_0(M)} \xrightarrow{\partial_0^T} R^{\oplus \beta^R_1(M)} \xrightarrow{\partial^T_1} \cdots \xrightarrow{\partial^T_{t-1}} R^{\oplus \beta^R_t(M)} \to C \to 0.\]
Applying the depth lemma to this exact sequence shows that $\im(\partial^T_0) \in \Deep(R)$, and it follows from Lemma \ref{notminsyz} that $M^*$ is not a minimal syzygy of $\im(\partial^T_0)$. Letting $R^{\mu_R(M^*)} \twoheadrightarrow M^*$ be a free module surjecting onto $M^*$, it follows there is an exact sequence
\[R^{\oplus \mu_R(M^*)} \xrightarrow{B} R^{\oplus \beta^R_0(M)} \xrightarrow{\partial^T_0} R^{\oplus \beta^R_1(M)} \rightarrow \Tr_R(M) \rightarrow 0\]
where the matrix $B$ contains a unit. Then there are invertible matrices $P$ and $Q$ for which $QBP^{-1}$ has the block form $\begin{pmatrix} 1 & 0 \\ 0 & B' \end{pmatrix}$. This gives rise to a chain isomorphism:
\[\begin{tikzcd}
	{R^{\oplus \mu_R(M^*)}} & {R^{\oplus \beta^R_0(M)}} & {R^{\oplus \beta^R_1(M)}} & {\Tr_R(M)} & 0 \\
	{R^{\oplus \mu_R(M^*)}} & {R^{\oplus \beta^R_0(M)}} & {R^{\oplus \beta^R_1(M)}} & {\Tr_R(M)} & 0
	\arrow["B", from=1-1, to=1-2]
	\arrow["P", from=1-1, to=2-1]
	\arrow["{\partial^T_0}", from=1-2, to=1-3]
	\arrow["Q", from=1-2, to=2-2]
	\arrow[from=1-3, to=1-4]
	\arrow[equals, from=1-3, to=2-3]
	\arrow[from=1-4, to=1-5]
	\arrow[equals, from=1-4, to=2-4]
	\arrow["{QBP^{-1}}", from=2-1, to=2-2]
	\arrow["{\partial^T_0 Q^{-1}}", from=2-2, to=2-3]
	\arrow[from=2-3, to=2-4]
	\arrow[from=2-4, to=2-5]
\end{tikzcd}\]
Since $QBP^{-1}$ has the block form $\begin{pmatrix} 1 & 0 \\ 0 & B' \end{pmatrix}$ and since $\partial^T_0 Q^{-1} \cdot QBP^{-1}=0$, it must be that $\partial^T_0 Q^{-1}$ has a column of $0$'s. Then $(\partial^T_0 Q^{-1})^T=(Q^T)^{-1} \partial_0$ has a row of all $0$'s. But this implies $R$ is a summand of $\coker((Q^T)^{-1} \partial_0) \cong \coker(\partial_0)=M$, as desired.
\end{proof}

\begin{theorem}\label{freesummand}
Suppose $R$ has depth $t$ and suppose $\Ext^i_R(M,R)=0$ for all $1 \le i \le t-1$. If $F$ is a free $R$-module, then $F$ is a summand of $M^*$ if and only if $F$ is a summand of $M$. 
\end{theorem}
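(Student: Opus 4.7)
The forward direction is essentially formal, so I would dispatch it first: if $M \cong F \oplus M'$, applying $(-)^*$ gives $M^* \cong F^* \oplus (M')^*$, and since $F$ is a finitely generated free module the canonical isomorphism $F^* \cong F$ exhibits $F$ as a summand of $M^*$.

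For the converse, the plan is to reduce to the rank-one case by induction on $n := \rank(F)$ and then invoke Theorem \ref{dualsummand}. Assume $F = R^{\oplus n}$ is a summand of $M^*$. The base case $n = 1$ is the substantive content: I want to show $R$ is a summand of $M$. Writing $M^* \cong R \oplus N$, the splitting gives a short exact sequence $0 \to R \to M^* \to N \to 0$. To put $M^*$ in $\DF(R)$ I need $N \in \Deep(R)$, i.e.\ $\depth N \ge t$, which follows once I know $\depth M^* \ge t$. For this I would dualize a minimal free resolution of $M$ and use $\Ext^i_R(M,R) = 0$ for $1 \le i \le t-1$ to obtain an exact sequence
\[
0 \to M^* \to R^{\beta_0^R(M)} \to R^{\beta_1^R(M)} \to \cdots \to R^{\beta_{t-1}^R(M)},
\]
then break this into short exact sequences $0 \to C_{i} \to R^{\beta_i^R(M)} \to C_{i+1} \to 0$ and apply the depth lemma iteratively, starting from the fact that $C_t$ embeds in a free module and hence has positive depth. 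This gives $\depth M^* \ge t$ and therefore $\depth N \ge t$, so $M^* \in \DF(R)$ and Theorem \ref{dualsummand} delivers $R$ as a summand of $M$.

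For the inductive step ($n \ge 2$), write $M \cong R \oplus M'$ using the base case, so that $M^* \cong R \oplus (M')^*$. Comparing with $M^* \cong R^{\oplus n} \oplus N'$ and applying Krull--Schmidt cancellation (valid for finitely generated modules over a Noetherian local ring, where $R$ has local endomorphism ring), I would obtain $(M')^* \cong R^{\oplus (n-1)} \oplus N'$. The Ext-vanishing hypothesis is inherited by $M'$ from the direct sum decomposition $M \cong R \oplus M'$, so the inductive hypothesis applied to $M'$ produces $R^{\oplus (n-1)}$ as a summand of $M'$, whence $F \cong R^{\oplus n}$ is a summand of $M$.

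The main obstacle is the depth estimate $\depth M^* \ge t$, which is what allows one to certify $M^* \in \DF(R)$ from the mere fact that $R$ splits off $M^*$; once this is verified the machinery already established in Theorem \ref{dualsummand} plus a routine Krull--Schmidt cancellation finishes the argument.
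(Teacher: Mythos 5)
Your proof is correct and follows essentially the same route as the paper: reduce to the rank-one case, show $M^*\in\DF(R)$ by splitting off $R$ and verifying the complement lies in $\Deep(R)$, then invoke Theorem \ref{dualsummand}. The only differences are cosmetic: you re-derive $\depth M^*\ge t$ via the dualized resolution and the depth lemma rather than citing \cite[Lemma 3.1]{DE21}, and you spell out the Krull--Schmidt cancellation underlying the induction step, which the paper treats as routine.
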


\begin{proof}
By induction on the rank of $F$, it suffices to show that if $R$ is a summand of $M^*$, then $R$ is also a summand of $M$. Since $\Ext^i_R(M,R)=0$ for all $1 \le i \le t-1$, it follows from \cite[Lemma 3.1]{DE21} that $M^* \in \Deep(R)$. If $M^* \cong R \oplus N$ for some $N$, then $N \in \Deep(R)$, and from the split exact sequence $0 \to R \to M \to N \to 0$, we see that $M^* \in \DF(R)$. Theorem \ref{dualsummand} then forces $R$ to be a summand of $M$, as desired.
\end{proof}

Theorem \ref{freesummand} gives a significant sharpening of \cite[Lemmas 3.2 and 3.3]{DE21} and leads, as an immediate consequence, to the following extension of \cite[Lemma 3.9]{DE21} and \cite[Lemma 2.13]{CG19}.
\begin{cor}\label{dualsummandweak}

Suppose $\depth(R) \le 1$. Then a free $R$-module $F$ is a summand of $M^*$ if and only if $F$ is a summand of $M$.

\end{cor}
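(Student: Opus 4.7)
The plan is to observe that this corollary is an essentially immediate specialization of Theorem \ref{freesummand}. Setting $t := \depth(R) \le 1$, the index range $1 \le i \le t-1$ from the hypothesis of Theorem \ref{freesummand} is empty (it reads $1 \le i \le -1$ if $t = 0$, or $1 \le i \le 0$ if $t = 1$). Consequently, the Ext-vanishing condition $\Ext^i_R(M,R) = 0$ for $1 \le i \le t-1$ holds vacuously for \emph{every} finitely generated $R$-module $M$, so Theorem \ref{freesummand} applies without restriction, and its conclusion is precisely the statement of the corollary.

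For completeness, I would also note that one direction of the biconditional is trivial: if $F$ is a free summand of $M$, writing $M \cong F \oplus N$ gives $M^* \cong F^* \oplus N^*$, and $F^* \cong F$ because $F$ is free. So the content of the corollary really lies in the forward direction---free summands of $M^*$ being reflected back to free summands of $M$---which is exactly where the Ext hypothesis in Theorem \ref{freesummand} is needed and is automatically satisfied in this low-depth regime.

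There is no genuine obstacle: the entire proof amounts to checking that the quantifier ``for all $1 \le i \le t-1$'' ranges over the empty set when $t \le 1$. The one pedagogical point to flag (as the statement implicitly does via its reference to \cite[Lemma 3.9]{DE21} and \cite[Lemma 2.13]{CG19}) is that the corollary recovers and unifies the previously known hypersurface/depth-zero instances of the dual-summand phenomenon. Thus the proof is a single sentence: apply Theorem \ref{freesummand}, noting that the hypothesis is vacuous.
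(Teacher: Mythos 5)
Your proof is correct and is precisely the argument the paper intends: the corollary is stated as an immediate consequence of Theorem \ref{freesummand}, and the only observation needed is that the Ext-vanishing hypothesis ranges over an empty set when $\depth(R)\le 1$, hence holds vacuously.
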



It is shown in \cite[Lemma 3.3]{DE21} that the vanishing of $\Ext^i_R(M,R)$ for all $1 \le i \le \depth(R)$ implies that $M^*$ is free if and only if $M$ is free. It is noted in \cite[Example 3.5]{DE21} that the range on the vanishing of $\Ext$ in this result cannot be improved to $1 \le i \le \depth(R)-1$, as one can take for instance $M=T \oplus F$ where $T$ has finite length. Our approach shows that this type of example is essentially the only one that can exist.

\begin{cor}\label{finalsummandcor}

Suppose $R$ has depth $t$ and that $\Ext^i_R(M,R)=0$ for all $1 \le i \le t-1$. If $M^*$ is free, then $M \cong T \oplus M^*$ where $T$ is either $0$ or has grade $t$. If additionally $R$ is Cohen-Macaulay, then $T$ has finite length.

\end{cor}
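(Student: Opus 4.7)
The plan is to apply Theorem \ref{freesummand} to produce a decomposition $M \cong T \oplus M^*$, and then squeeze the desired information about $T$ out of the vanishing of $\Ext$ and the freeness of $M^*$. First, since $M^*$ is itself a free $R$-module and is trivially a summand of $M^*$, Theorem \ref{freesummand} applies and yields that $M^*$ is a summand of $M$; write $M \cong T \oplus M^*$. If $T = 0$ there is nothing more to prove, so assume $T \neq 0$ and aim to show $\grade T = t$.

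Next I would dualize. Since $M^*$ is free of finite rank, $M^{**} = (M^*)^*$ is canonically free of the same rank, hence $M^{**} \cong M^*$. Dualizing the decomposition yields $M^* \cong T^* \oplus M^{**} \cong T^* \oplus M^*$, and tensoring with $k$ (or appealing to Krull-Schmidt) forces $T^* = 0$. On the other hand, because $M^*$ is free, additivity of $\Ext$ across the direct sum gives $\Ext^i_R(T,R) \cong \Ext^i_R(M,R)$ for all $i \geq 1$, and these groups vanish for $1 \le i \le t-1$ by hypothesis. Combined with $T^* = 0$, this yields $\Ext^i_R(T,R) = 0$ for $0 \le i \le t-1$, i.e., $\grade T \ge t$. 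Since $T \neq 0$, $\Ann T$ is a proper ideal contained in $\m$, so $\grade T = \grade(\Ann T, R) \le \depth R = t$, yielding equality.

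Finally, for the Cohen-Macaulay case, $\dim R = \depth R = t$ and the CM hypothesis gives $\grade(\Ann T, R) = \height \Ann T$, so $\height \Ann T = t$. In a Cohen-Macaulay local ring of dimension $t$, every prime of height $t$ must equal $\m$ by catenarity and the formula $\dim R/\p + \height \p = \dim R$, so $\Supp T \subseteq V(\Ann T) = \{\m\}$, making $T$ of finite length. I do not expect any step here to pose a serious obstacle once Theorem \ref{freesummand} is in hand; the key simplifying observation is that the double dual of $M^*$ is again $M^*$, which cleanly isolates $T$ and forces $T^* = 0$.
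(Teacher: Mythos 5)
Your proof is correct and follows essentially the same route as the paper: apply Theorem \ref{freesummand} (with $F=M^*$ itself) to split off $M^*$ from $M$, dualize the decomposition using $M^{**}\cong M^*$ to force $T^*=0$, and then combine $T^*=0$ with the hypothesized vanishing of $\Ext^i_R(M,R)$ for $1\le i\le t-1$ (which passes to $T$ because $M^*$ is free) to conclude $\grade T\ge t$. You add two small clarifications that the paper leaves implicit — the upper bound $\grade T\le \depth R=t$ giving equality, and the support argument identifying $V(\Ann T)=\{\m\}$ in the Cohen-Macaulay case — both of which are correct and make the final claims fully explicit.
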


\begin{proof}

As $M^*$ is free, Theorem \ref{freesummand} forces $M^*$ to be a summand of $M$. We may thus write $M \cong T \oplus M^*$. But using again that $M^*$ is free, we have $M^* \cong T^* \oplus M^{**} \cong T^* \oplus M^*$ and it follows that $T^*=0$. By assumption, we then have $\Ext^i_R(T,R)=0$ for all $0 \le i \le t-1$. It follows that either $T=0$ or $\grade(T)=t$, completing the proof.
\end{proof}

In order to apply the above results, we will need the following Lemma which is also used extensively in later sections.

\begin{lem}\label{dualcolontraceulrich}

Suppose $R$ is a Cohen-Macaulay ring of dimension $1$ with minimal multiplicity. Let $M$ be a torsion-free $R$-module and $I$ an ideal of $R$ containing a nonzerodivisor $x$. Then we have the following:
\begin{enumerate}
\item[$(1)$] $\Omega^1_R(M)$ is Ulrich.
\item[$(2)$] If $M$ has no nonzero free summand, then $M^*$ is Ulrich.
\item[$(3)$] If $I$ is not principal, then $(x:I)$ is Ulrich.
\item[$(4)$] If the ideal $J$ is Ulrich as an $R$-module then $JM$ is Ulrich.
\item[$(5)$] If $I$ is not principal, then $\tr_R(I)$ is Ulrich.
\item[$(6)$] If $M$ has no nonzero free summand, then $\tr_R(M)$ is Ulrich.

\end{enumerate}
\end{lem}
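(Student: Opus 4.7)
The plan is to establish (1) directly from the minimal multiplicity hypothesis, reduce (2) to (1) by exhibiting $M^*$ as a minimal first syzygy, and deduce (3)--(6) from (1) and (2) via Proposition \ref{traceprop} and routine module manipulations.

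For (1), after possibly extending the residue field we may assume $\m^2 = x\m$ for a minimal reduction $x \in \m$. Take a minimal free cover $R^n \twoheadrightarrow M$ and set $\Omega := \Omega^1_R(M) \subseteq \m R^n$; I aim to show $\m\Omega = x\Omega$. Given $z \in \Omega$ and $m \in \m$, we have $mz \in \m^2 R^n = x\m R^n$, so $mz = xu$ for some $u \in \m R^n$; since $xu \in \Omega$, its image $\bar u$ in $M$ satisfies $x\bar u = 0$, and $M$ being torsion-free forces $\bar u = 0$, i.e., $u \in \Omega$. Hence $\m\Omega \subseteq x\Omega$, and $\Omega$ is Ulrich (being MCM as a submodule of $R^n$).

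For (2), I will realize $M^*$ as a minimal first syzygy of a torsion-free module and apply (1). From a minimal presentation $R^a \xrightarrow{A} R^b \to M \to 0$, dualizing yields $0 \to M^* \to R^b \xrightarrow{A^T} R^a \to \Tr_R(M) \to 0$. Set $N := \im(A^T) \subseteq R^a$; then $N$ is torsion-free and $M^* = \ker(R^b \twoheadrightarrow N)$. The embedding $M^* \hookrightarrow R^b$ sends $\phi$ to $(\phi(x_1), \dots, \phi(x_b))$ for a minimal generating set of $M$; an element lying outside $\m R^b$ would correspond to a $\phi$ hitting a unit, hence a split surjection $M \twoheadrightarrow R$, contradicting the ``no free summand'' hypothesis. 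Thus $M^* \subseteq \m R^b$, so $R^b \twoheadrightarrow N$ is a minimal cover, $M^* = \Omega^1_R(N)$, and (1) applied to $N$ concludes (2). Crucially, this argument uses the no-free-summand hypothesis but not torsion-freeness of $M$; it will therefore apply to an ideal $I$ as well.

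For (3), one checks that $r \mapsto (y \mapsto ry/x)$ is an $R$-module isomorphism $(x:I) \xrightarrow{\sim} I^*$ (well-defined and injective since $x$ is a nonzerodivisor and $\Ann_R(I) = 0$, and surjective via $\phi \mapsto \phi(x)$); since $I$ nonprincipal implies $I$ has no free summand by a rank argument (in the domain setting underlying the paper's applications), the argument of (2) gives $(x:I) \cong I^*$ Ulrich. For (4), $\m J = \underline{x} J$ immediately gives $\m(JM) = \underline{x}(JM)$, and $JM$ is MCM as a submodule of $M$. For (5), Proposition \ref{traceprop}(2) yields $x\tr_R(I) = I(x:I)$; combining (3) with (4) shows $I(x:I)$ is Ulrich, and multiplication by the nonzerodivisor $x$ gives the isomorphism $\tr_R(I) \cong x\tr_R(I) = I(x:I)$. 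Finally, for (6), letting $\theta: M^* \otimes_R M \to R$ be the evaluation map with image $\tr_R(M)$, we compute $\m\tr_R(M) = \theta(\m(M^*\otimes M)) = \theta((\m M^*)\otimes M) = \theta(\underline{x}M^* \otimes M) = \underline{x}\tr_R(M)$ using (2), while $\tr_R(M)$ contains a nonzerodivisor since $M^* \ne 0$ for a nonzero torsion-free $M$ over a domain. The main subtlety throughout will be confirming the no-free-summand and nonvanishing conditions cleanly in the general Cohen--Macaulay setting, though in the domain case intended for applications both reduce to rank considerations.
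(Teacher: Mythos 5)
Your proof is correct, and for the harder items it takes a genuinely more elementary route than the paper. For (1), the paper invokes \cite[Corollary 1.2.5]{Av10} (no free summands of minimal first syzygies) together with \cite[Theorem B(1)]{KT19}; you instead prove the Ulrich property directly from $\m^2 = x\m$, using torsion-freeness of $M$ to push the equality $mz=xu$ down into the syzygy submodule. For (2), the paper routes through their own Corollary \ref{dualsummandweak} (which in turn rests on the nontrivial Theorems \ref{dualsummand} and \ref{freesummand}) to see that $M^*$ has no free summand, then identifies $M^*$ with a minimal second syzygy of $\Tr_R(M)$; you instead verify minimality of the cover directly — a unit coordinate in the embedding $M^*\hookrightarrow R^{\oplus b}$ would split off a free summand of $M$ — and then apply your (1) to $\im(A^T)$. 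This bypasses the $\DF(R)$/$\Omega\Deep(R)$ machinery entirely and makes the proof self-contained. Items (3)--(5) match the paper's arguments. For (6), the paper reduces to (5) via $\tr_R(\tr_R(M))=\tr_R(M)$ and nonprincipality of $\tr_R(M)$; your direct computation $\m\tr_R(M)=\theta(\m M^*\otimes M)=\theta(xM^*\otimes M)=x\tr_R(M)$ is cleaner and avoids invoking (5).

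Two small remarks on your hedges. The ``rank argument'' for (3) in fact works for any one-dimensional Cohen--Macaulay $R$: if $I$ contains a nonzerodivisor and $I\cong R\oplus N$, then tensoring with the total ring of fractions shows $N$ is torsion, but $N$ is a summand of the MCM (hence torsion-free) ideal $I$, so $N=0$; thus the ``domain'' qualifier is unnecessary. For (6), what you actually need is that $\tr_R(M)$ is MCM, which in dimension one just means $\tr_R(M)\ne 0$; this holds whenever $M$ is a nonzero torsion-free module (localize at a minimal prime in the support of $M$ and use that over an Artinian local ring any nonzero module has a nonzero dual), so again the domain assumption can be dropped. The paper itself glosses over the latter point in its proof of (6), so your instinct to flag it was sound even if the conclusion holds more generally than you claimed.
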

\begin{proof}
Extending the residue field if needed, we may suppose $R$ has an infinite residue field. For $(1)$, it follows from \cite[Corollary 1.2.5]{Av10} that $\Omega^1_R(M)$ has no nonzero free summands, and then \cite[Theorem B (1)]{KT19} implies that it is Ulrich. For $(2)$, as $M$ has no nonzero free summand, Corollary \ref{dualsummandweak} forces $M^*$ to have no nonzero free summand. Then $M^* \cong \Omega^2_R(\Tr_R(M))$ is a (minimal) syzygy of a torsion-free module, and so is Ulrich by $(1)$. For $(3)$, it is well-known that the map $I^* \to (x:I)$ given by $f \mapsto f(x)$ is an isomorphism \cite[Lemmas 2.4.1, 2.4.3]{HS06}, so $(3)$ follows from $(2)$. For $(4)$, let $y$ be a minimal reduction of $\m$. Then as $J$ is Ulrich, we have $yJ=\m J$, so $yJM=\m JM$, meaning $JM$ is Ulrich. For $(5)$, we have $x \tr_R(I)=(x:I)I$ from Proposition \ref{traceprop} (2), so in particular, $\tr_R(I) \cong (x:I)I$, and the claim follows from combining $(3)$ and $(4)$. Item $(6)$ follows from $(5)$ once we note $\tr_R(\tr_R(M))=\tr_R(M)$, and that $\tr_R(M)$ is principal if and only if $M$ has a nonzero free summand, see Proposition \ref{basictraceprop} (2).
\end{proof}

With these results in hand, we show that, if $R$ is Cohen-Macaulay of dimension $1$ with canonical module $\w$, and $R$ has minimal multiplicity, then we can attach an Ulrich module to any $R$-module $M$ via the following procedure:

\begin{theorem}\label{timesIUlrich}

Let $R$ be a Cohen-Macaulay local ring of dimension $1$ with minimal multiplicity and with canonical module $w_R$. Let $M$ be a finitely generated $R$-module. If $M$ has no nonzero free summand, then $(M \otimes_R w_R)/\Gamma_{\m}(M \otimes_R w_R)$ is an Ulrich module. If additionally, $M$ is torsion-free and $R$ has a canonical ideal $I$ (i.e. $R$ is generically Gorenstein), then $IM$ is Ulrich.

    
\end{theorem}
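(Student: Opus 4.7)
The strategy is to identify the module $(M \otimes_R \omega_R)/\Gamma_\m(M \otimes_R \omega_R)$ with the double dual $M^{*\vee} := \Hom_R(\Hom_R(M,R),\omega_R)$ and then to prove that $M^{*\vee}$ is Ulrich. For the identification, choose a free presentation $R^b \xrightarrow{A} R^a \to M \to 0$ and apply $(-)^*$ to obtain $0 \to M^* \to R^a \to \im(A^T) \to 0$, in which $\im(A^T) \subseteq R^b$ is MCM (it is torsion-free over a one-dimensional CM ring). Since $\Ext^1_R(\im(A^T),\omega_R) = 0$, applying $\Hom_R(-,\omega_R)$ yields an exact sequence $0 \to (\im A^T)^\vee \to \omega_R^a \to M^{*\vee} \to 0$. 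Tensoring the original presentation with $\omega_R$ exhibits $M \otimes_R \omega_R$ as $\omega_R^a/\im(A\otimes 1)$, and the natural map $m \otimes w \mapsto (f \mapsto f(m)w)$ furnishes a surjection $M \otimes_R \omega_R \twoheadrightarrow M^{*\vee}$. The kernel is contained in $\Gamma_\m(M \otimes_R \omega_R)$ because $M^{*\vee}$ is MCM; the reverse inclusion follows by localizing at every minimal prime $P$ of $R$, over which the map becomes the natural isomorphism $M_P \otimes_{R_P} \omega_{R_P} \cong \Hom_{R_P}(M_P^*,\omega_{R_P})$ coming from Matlis duality on the Artinian ring $R_P$ (via the tensor–Hom adjunction and $R_P \cong \Hom_{R_P}(\omega_{R_P},\omega_{R_P})$).

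The other ingredient is an ``Ulrich duality'' principle: if $N$ is Ulrich, so is $N^\vee$. After extending the residue field if necessary, fix a nonzerodivisor minimal reduction $y \in \m$. Applying $\Hom_R(N,-)$ to $0 \to \omega_R \xrightarrow{y} \omega_R \to \omega_R/y\omega_R \to 0$ and using $\Ext^1_R(N,\omega_R)=0$ (since $N$ is MCM) produces
\[
0 \to N^\vee \xrightarrow{y} N^\vee \to \Hom_{R/yR}(N/yN,\,\omega_{R/yR}) \to 0.
\]
Because $N$ is Ulrich, $N/yN \cong k^{\mu_R(N)}$, whose Matlis dual over the Artinian ring $R/yR$ is again $k^{\mu_R(N)}$. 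Hence $N^\vee/yN^\vee \cong k^{\mu_R(N)}$ is $\m$-annihilated, forcing $\m N^\vee = yN^\vee$, so $N^\vee$ is Ulrich.

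To prove the first statement, set $F = M/\Gamma_\m(M)$; this is torsion-free, hence MCM. Any free summand of $F$ lifts to a free summand of $M$ via the projectivity of $R$, so the hypothesis on $M$ passes to $F$. Lemma~\ref{dualcolontraceulrich}(2) applied to $F$ gives that $M^* = F^*$ is Ulrich, and the Ulrich-duality principle then gives that $M^{*\vee}$ is Ulrich. Combined with the identification above, this shows $(M \otimes_R \omega_R)/\Gamma_\m$ is Ulrich. For the second statement, take $I \cong \omega_R$ a canonical ideal with $M$ torsion-free; tensoring $0 \to I \to R \to R/I \to 0$ with $M$ realizes $IM$ as the image of the natural map $I \otimes_R M \to M$, whose kernel is $\Tor^R_1(R/I,M)$. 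The ideal $I$ is $\m$-primary, so this Tor module is annihilated by a power of $\m$ and lies in $\Gamma_\m(I \otimes_R M)$; conversely $IM \subseteq M$ is torsion-free, so $\Gamma_\m(I \otimes_R M)$ must map to $0$. Therefore $IM \cong (I \otimes_R M)/\Gamma_\m \cong (M \otimes_R \omega_R)/\Gamma_\m$, which is Ulrich by the first statement.

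The main technical obstacle is the identification in the first paragraph: while MCM-ness of $M^{*\vee}$ immediately yields $\Gamma_\m \subseteq \ker$, verifying the reverse inclusion requires showing that the comparison map is a generic isomorphism, which is where the Matlis-duality identity $M_P \otimes_{R_P} \omega_{R_P} \cong \Hom_{R_P}(M_P^*,\omega_{R_P})$ over the Artinian localizations at minimal primes is essential.
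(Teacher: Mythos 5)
Your proof is correct and reaches the same key identification as the paper's --- namely that $(M \otimes_R \omega_R)/\Gamma_\m(M \otimes_R \omega_R) \cong \Hom_R(M^*, \omega_R)$ --- but by a more hands-on route. The paper gets this in one move: applying $\Hom_R(-,\omega_R)$ to the sequence defining $\Gamma_\m$ shows $\bigl((M\otimes_R\omega_R)/\Gamma_\m\bigr)^\vee \cong (M\otimes_R\omega_R)^\vee \cong M^*$ (the torsion term dies and then Hom--tensor adjunction applies), and $\omega_R$-reflexivity of the MCM quotient closes the loop. You instead construct the comparison map $M\otimes_R\omega_R \to M^{*\vee}$ from a free presentation, check $\im(A\otimes 1)\subseteq(\im A^T)^\vee$ to get surjectivity, and verify that the kernel is torsion by localizing at minimal primes, where the map becomes the Matlis-duality isomorphism $M_P\otimes\omega_{R_P}\cong\Hom_{R_P}(M_P^*,\omega_{R_P})$; this works but is noticeably longer. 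You also prove the Ulrich-duality step ($N$ Ulrich $\Rightarrow N^\vee$ Ulrich) from scratch, where the paper simply cites \cite[Theorem A (2)]{KT19}; your argument (reduce mod a minimal reduction $y$ using $\Ext^1_R(N,\omega_R)=0$ and Matlis duality over $R/yR$) is the standard one and is correct. On one point you are actually more careful than the paper: you explicitly pass to $F=M/\Gamma_\m(M)$ and check both that $F$ is torsion-free and that the no-free-summand hypothesis descends to $F$ (a splitting of $M\twoheadrightarrow F\twoheadrightarrow R$ would produce a free summand of $M$) before invoking Lemma~\ref{dualcolontraceulrich}(2), whose hypothesis requires a torsion-free module; the paper applies that lemma to $M$ directly without comment. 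One transposition to fix: in your middle paragraph you write that MCM-ness of $M^{*\vee}$ gives $\ker\subseteq\Gamma_\m$, but torsion-freeness of the target yields the containment $\Gamma_\m\subseteq\ker$ --- your concluding paragraph has the two directions attributed correctly, so this is evidently just a slip.
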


\begin{proof}
If $M$ is torsion, then $M \otimes_R \w$ is also torsion, so $(M \otimes_R \w)/\Gamma_{\m}(M \otimes_R \w)=0$ and there is nothing to prove. Otherwise, $\dim(M \otimes_R \w)=\dim(R)=1$ since $\w$ has full support, so $(M \otimes_R \w)/\Gamma_{\m}(M \otimes_R \w)$ is maximal Cohen-Macaulay. Moreover, applying $\Hom_R(-,\w)$ to the exact sequence 
\[0 \to \Gamma_{\m}(M \otimes_R \w) \to M \otimes_R \w \to (M \otimes_R \w)/\Gamma_{\m}(M \otimes_R \w) \to 0,\] we obtain, since $\Gamma_{\m}(M \otimes_R \w)$ is torsion, an isomorphism 
\[\Hom_R((M \otimes_R \w)/\Gamma_{\m}(M \otimes_R \w),\w) \cong \Hom_R(M \otimes_R \w,\w).\] But $\Hom_R(M \otimes_R \w,\w) \cong M^*$ by Hom-tensor adjointness and as $\Hom_R(\w,\w) \cong R$. As $(M \otimes_R \w)/\Gamma_{\m}(M \otimes_R \w)$ is torsion-free, it follows that 
\[(M \otimes_R \w)/\Gamma_{\m}(M \otimes_R \w) \cong \Hom_R(M^*,\w).\]

It follows from Lemma \ref{dualcolontraceulrich} (2) that $M^*$ is Ulrich, but then \cite[Theorem A (2)]{KT19} implies that $\Hom_R(M^*,\w)$, and thus $(M \otimes_R I)/\Gamma_{\m}(M \otimes_R \w)$ is Ulrich.

If $I$ is a canonical ideal and $M$ is torsion-free, then $IM$ is also torsion-free. Since $I$ is $\m$-primary, we have that $\Tor^R_1(M,R/I)$ is torsion. It thus follows from the exact sequence
\[0 \to \Tor^R_1(M,R/I) \to M \otimes_R I \to IM \to 0\]
that $IM \cong (M \otimes_R I)/\Gamma_{\m}(M \otimes_R I)$, completing the proof.
\end{proof}








\begin{prop}\label{reductionext}
Suppose $R$ is a Cohen-Macaulay ring of dimension $1$ having minimal multiplicity, let $M$ and $N$ be maximal Cohen-Macaulay $R$-modules. Set $L:=\Ann_R(\Ext^1_R(M,N))$. The following statements hold.
\begin{enumerate}
\item We have $\bar{L}=\m$ if and only if $L=\m$. 

\item If $y$ is a minimal reduction of $\m$, $N=R$, and $M/yM$ has no nonzero free $R/y$-summands, then $\dim_k(\Ext^1_R(M,R))=r(R)\mu_R(M)-e_R(M^*)$ whenever $\bar{L}=\m$.
\end{enumerate}

    
\end{prop}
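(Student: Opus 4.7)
My plan is as follows. For part~(1), the implication $L=\m \Rightarrow \bar{L}=\m$ is immediate from the definition of $\bar{(\cdot)}$. For the converse, I would first note that $\Ext^1_R(M,N)$ has finite length (since $M$ and $N$ are MCM over a one-dimensional Cohen--Macaulay ring), so $L$ is either $R$ or $\m$-primary. The hypothesis $\bar{L}=\m$ rules out $L=R$, and then the minimal multiplicity constraint $\m^2=y\m$ for any minimal reduction $y$, combined with the Ulrich inputs of Lemma~\ref{dualcolontraceulrich}, forces $L\supseteq \m$, giving $L=\m$.

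For part~(2), the hypothesis that $M/yM$ has no free $R/yR$-summand directly implies (via quotienting) that $M$ has no free $R$-summand. By Corollary~\ref{dualsummandweak} then $M^*$ has no free summand, and Lemma~\ref{dualcolontraceulrich} makes $M^*$, $\Omega^1_R(M)$ and $(\Omega^1_R(M))^*$ all Ulrich. Dualizing the minimal presentation $0\to\Omega^1_R(M)\to R^n\to M\to 0$, where $n=\mu_R(M)$, yields
\[0\to M^*\to R^n\xrightarrow{\phi}(\Omega^1_R(M))^*\to\Ext^1_R(M,R)\to 0.\]
Because $\Ext^1_R(M,R)$ has finite length, multiplicity additivity applied to $0\to M^*\to R^n\to \im\phi\to 0$ yields $e_R((\Omega^1_R(M))^*)=n\,e(R)-e_R(M^*)$, which equals $\mu_R((\Omega^1_R(M))^*)$ by the Ulrich property. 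Invoking part~(1), $\m\Ext^1_R(M,R)=0$, so $\m(\Omega^1_R(M))^*\subseteq\im\phi$, and the induced $k$-linear map $\bar\phi\colon k^n\to (\Omega^1_R(M))^*/\m(\Omega^1_R(M))^*$ satisfies
\[\dim_k \Ext^1_R(M,R)=n\,e(R)-e_R(M^*)-\dim_k \im\bar\phi.\]
Since $r(R)=e(R)-1$ for a one-dimensional Cohen--Macaulay ring of minimal multiplicity, matching the claimed formula reduces to showing that $\bar\phi$ is injective.

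The main technical step, which I expect to be the principal obstacle, is this injectivity. Using that $(\Omega^1_R(M))^*$ is Ulrich, $\m(\Omega^1_R(M))^*=y(\Omega^1_R(M))^*$, and the condition $\phi(v)\in y(\Omega^1_R(M))^*$ for $v=(v_1,\dots,v_n)\in R^n$ unfolds to $\sum v_i w_i\in yR$ for every $w=(w_1,\dots,w_n)\in\Omega^1_R(M)\subseteq R^n$. Using the defining property $\Omega^1_R(M)=\ker(R^n\to M)$, this precisely identifies such $v$ with the data of an $R$-linear map $\bar v\colon M\to R/yR$ sending $m_i\mapsto v_i+yR$; that is,
\[\{v\in R^n:\phi(v)\in y(\Omega^1_R(M))^*\}/yR^n\;\cong\;\Hom_R(M,R/yR).\]
If $\ker\bar\phi$ were nonzero, some representative $v$ would have a unit coordinate $v_i$, and the corresponding $\bar v$ would hit a unit and so be surjective onto $R/yR$. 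Since $R/yR$ is free of rank one over itself, any surjection of $R/yR$-modules onto it splits, producing a free $R/yR$-summand of $M/yM$ and contradicting the standing hypothesis. This forces $\bar\phi$ to be injective and completes the dimension count.
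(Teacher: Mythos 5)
Your part (1) is too vague to count as a proof. You correctly observe that $\Ext^1_R(M,N)$ has finite length and that $\bar L=\m$ means $L$ contains a minimal reduction $y$ of $\m$ (after a residue-field extension), so the task reduces to passing from $y\Ext^1_R(M,N)=0$ to $\m\Ext^1_R(M,N)=0$. But the sentence ``the minimal multiplicity constraint $\m^2=y\m$, combined with the Ulrich inputs of Lemma~\ref{dualcolontraceulrich}, forces $L\supseteq\m$'' is a gesture, not an argument: you never identify \emph{which} Ulrich module is doing the work or \emph{how}. The missing step is to realize $\Ext^1_R(M,N)$ explicitly as a quotient of an Ulrich module. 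Applying $\Hom_R(-,N)$ to $0\to\Omega^1_R(M)\to R^{\mu_R(M)}\to M\to 0$ produces $0\to C\to\Hom_R(\Omega^1_R(M),N)\to\Ext^1_R(M,N)\to 0$; by Lemma~\ref{dualcolontraceulrich}(1) the module $\Omega^1_R(M)$ is Ulrich, hence so is $\Hom_R(\Omega^1_R(M),N)$, and then $y$ and $\m$ act identically on it, which is what transfers $y$-annihilation to $\m$-annihilation. Without something of this shape your claim is unsupported, and notice that part (1) must be proved for an arbitrary MCM module $N$, not just $N=R$.

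Part (2) is correct, but it takes a genuinely different route from the paper's. The paper dualizes the multiplication-by-$y$ short exact sequence $0\to M\xrightarrow{\cdot y}M\to M/yM\to 0$, identifies $\Ext^1_R(M/yM,R)$ with $\Hom_{R/y}(M/yM,R/y)$, and computes its length by Matlis duality, getting $r(R)\mu_R(M)$; additivity of length then finishes. You instead dualize the minimal presentation of $M$, use additivity of multiplicity together with the Ulrich property of $(\Omega^1_R(M))^*$ to compute $\mu_R((\Omega^1_R(M))^*)=\mu_R(M)e(R)-e_R(M^*)$, and reduce to showing that the induced map $\bar\phi\colon k^{\mu_R(M)}\to(\Omega^1_R(M))^*/\m(\Omega^1_R(M))^*$ is injective. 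Your identification of $\ker\bar\phi$ (modulo $yR^n$) with $\Hom_R(M,R/yR)$ is correct, and your splitting argument shows a nonzero class would force a free $R/yR$-summand of $M/yM$, contradicting the hypothesis. Both approaches make essential use of the ``no free summand'' hypothesis, but in different places: the paper uses it to force $\Hom_{R/y}(M/yM,R/y)$ to be annihilated by $\m$, while you use it for the injectivity of $\bar\phi$. Your argument is slightly longer but stays at the level of presentation matrices rather than invoking Matlis duality, which some readers may prefer; the paper's is more compact. One small point to make explicit: you need $r(R)=e(R)-1$, which holds for a one-dimensional Cohen--Macaulay ring of minimal multiplicity that is not a DVR (and the DVR case is vacuous here since then no nonzero $M$ satisfies the hypothesis).
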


\begin{proof}

Extending the residue field if needed, we may suppose $\m$ has a minimal reduction $y$, and it suffices to show $y\Ext^1_R(M,N)=0$ if and only if $\m \Ext^1_R(M,N)=0$. It's clear that $y$ annihilates $\Ext^1_R(M,N)$ if $\m$ does, so we need only concern ourselves with the reverse implication. For this, applying $\Hom_R(-,N)$ to the natural exact sequence
\[0 \rightarrow \Omega^1_R(M) \rightarrow R^{\oplus \mu_R(M)} \xrightarrow{p} M \rightarrow 0,\]
we obtain an exact sequence of the form
\[0 \to C \to \Hom_R(\Omega^1_R(M),N) \to \Ext^1_R(M,N) \to 0\]
where $C:=\coker(\Hom(p,N))$. It follows from Lemma \ref{dualcolontraceulrich} (1) that $\Omega^1_R(M)$ is Ulrich, and then so is $\Hom_R(\Omega^1_R(M),N)$ by \cite[Theorem A (2)]{KT19}. But then $y \Ext^1_R(M,N)=0$ if and only if $y\Hom_R(\Omega^1_R(M),N) \subseteq C$, and as $\Hom_R(\Omega^1_R(M),N)$ is Ulrich, $y\Hom_R(\Omega^1_R(M),N)=\m \Hom_R(\Omega^1_R(M),N)$. So $\m \Hom_R(\Omega^1_R(M),N) \subseteq C$, and it follows $\m \Ext^1_R(M,N)=0$, as desired. 

Now, suppose $N=R$ and $M/yM$ has no nonzero free $R/y$ summand. With $y\Ext^1_R(M,R)=0$, we apply $\Hom_R(-,R)$ to the exact sequence
\[0 \rightarrow M \xrightarrow{\cdot y} M \rightarrow M/yM \rightarrow 0,\]
and obtain an exact sequence
\[0 \rightarrow M^* \xrightarrow{\cdot y} M^* \rightarrow \Ext^1_R(M/yM,R) \rightarrow \Ext^1_R(M,R) \xrightarrow{\cdot y} \Ext^1_R(M,R).\]
But as $y \Ext^1_R(M,R)=0$, the sequence
\[0 \rightarrow M^* \xrightarrow{\cdot y} M^* \rightarrow \Ext^1_R(M/yM,R) \rightarrow \Ext^1_R(M,R) \rightarrow 0\]
is exact. By \cite[Lemma 1.2.4]{BH93}, we have $\Ext^1_R(M/yM,R) \cong \Hom_{R/y}(M/yM,R/y)$. It follows from Lemma \ref{dualcolontraceulrich} (2) that $\Hom_{R/y}(M/yM,R/y)$ is an Ulrich $R/y$-module, that is that $\m \Hom_{R/y}(M/yM,R/y)=0$. Now by using Hom-tensor adjointness and \cite[Proposition 3.2.12]{BH93}, we get that $\Hom_{R/y}(M/yM, R/y)$ is Matlis dual to $M/yM\otimes_{R/y}E_{R/y}(k)$, and hence \[l_R(\Hom_{R/y}(M/yM,R/y))=l_R(M/yM\otimes_{R/y}E_{R/y}(k))=\mu_R(M/yM\otimes_{R/y}E_{R/y}(k))\]
\[=\mu_R(M/yM)\mu_R(E_{R/y}(k))=r(R)\mu_R(M).\]
As $l_R(M^*/yM^*)=e_R(M^*)$, the remaining claim follows from additivity of length.    
\end{proof}

\begin{cor}\label{idealcase}
Suppose $R$ is a Cohen-Macaulay ring of dimension $1$ with minimal multiplicity. Suppose $J$ is an $\m$-primary ideal of $R$ that is not principal, and let $L:=\Ann_R(\Ext^1_R(J,R))$. Then $\bar{L}=\m$ if and only if $L=\m$. Moreover, when either of these equivalent conditions hold, we have $\dim_k(\Ext^1_R(J,R))=r\mu_R(J)-r-1$.




\end{cor}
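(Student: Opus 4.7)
The plan is to invoke Proposition~\ref{reductionext} with $M = J$ and $N = R$. Since $J$ is $\m$-primary in the one-dimensional Cohen--Macaulay ring $R$, it contains a nonzerodivisor, whence $\depth(J) \geq 1 = \dim R$ and $J$ is maximal Cohen--Macaulay. Part $(1)$ of the proposition then delivers the equivalence $\bar L = \m \iff L = \m$ directly. The bulk of the work is aimed at applying Proposition~\ref{reductionext}$(2)$ to derive the numerical formula.

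Extending the residue field if necessary, fix a minimal reduction $y$ of $\m$. The key thing to verify is that $J/yJ$ has no nonzero free $R/y$-summand. The essential observation is that $J$ has ``rank one'' at every minimal prime: for each minimal prime $\p$, the nonzerodivisor contained in $J$ becomes a unit of $R_{\p}$, so $J_{\p} = R_{\p}$. Combined with the associativity formula for multiplicities and the fact that $(y)$ is a reduction of $\m$, this gives
\[
\ell_R(J/yJ) \;=\; e_R((y), J) \;=\; e_R(\m, J) \;=\; e(R) \;=\; \ell_R(R/y).
\]
Consequently, any decomposition $J/yJ \cong (R/y)^{\oplus n} \oplus N'$ with $n \geq 1$ forces $n = 1$ and $\ell_R(N') = 0$, giving $J/yJ \cong R/y$ and hence $\mu_R(J) = 1$, contradicting the non-principality of $J$.

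With the hypothesis of Proposition~\ref{reductionext}$(2)$ thus verified, we obtain $\dim_k \Ext^1_R(J, R) = r\,\mu_R(J) - e_R(J^*)$. To compute $e_R(J^*)$, localize once more: $(J^*)_{\p} = \Hom_{R_{\p}}(R_{\p}, R_{\p}) = R_{\p}$ for every minimal prime $\p$, so the associativity formula yields $e_R(J^*) = e(R)$.

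It remains to translate $e(R)$ into $r + 1$ using the minimal multiplicity hypothesis. Since $\m^2 \subseteq y\m \subseteq yR$, we have $(\m/y)^2 = 0$ in the Artinian local ring $R/y$, so $\soc(R/y) = \m/y$; and as $y$ is a minimal generator of $\m$, $\dim_k(\m/y) = \mu_R(\m) - 1 = e(R) - 1$. Therefore $r = \dim_k \soc(R/y) = e(R) - 1$, and substitution produces the desired $\dim_k \Ext^1_R(J, R) = r\mu_R(J) - r - 1$. The only mildly subtle step is ruling out free $R/y$-summands in $J/yJ$; everything else is bookkeeping with the associativity formula and the simple Artinian structure of $R/y$ forced by minimal multiplicity.
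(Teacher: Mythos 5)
Your proof is correct and follows the same route as the paper: both verify that $J/yJ$ has no nonzero free $R/y$-summand via a length comparison $\ell_R(J/yJ)=\ell_R(R/y)=e(R)$, invoke Proposition~\ref{reductionext}, and compute $e_R(J^*)=e(R)=r+1$ using the rank-one and minimal-multiplicity hypotheses. You fill in more of the bookkeeping (associativity formula, the explicit computation $r=e(R)-1$) than the paper does, but the underlying argument is identical.
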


\begin{proof}

As $I$ is an $\m$-primary ideal, we have $l_R(I/yI)=l_R(R/y)=e(R)$, so if $I/yI$ has a free $R/y$ summand, then $I/yI \cong R/y$. But this would force $I$ to be principal, which we suppose is not the case. The claim then follows from Proposition \ref{reductionext}, noting that, $e_R(I^*)=e(R)=r+1$ as $I$ has rank $1$. \end{proof}

\begin{lem}\label{torsiondual}
Suppose $R$ is a Cohen-Macaulay local ring of dimension $1$ with canonical module $\w$. If $I$ is an $\m$-primary ideal, then there is an isomorphism $\Tor^R_1(I,R/I) \cong \Hom_R(\Ext^1_R(I,I^{\vee}),E_R(k))$. In particular, $\Tor^R_1(I,R/I) \cong \Ext^1_R(I,I^{\vee})$ if $\m \Ext^1_R(I,I^{\vee})=0$.  
\end{lem}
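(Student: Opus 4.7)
The plan is to sandwich both sides of the desired isomorphism through Matlis duality with $E = E_R(k)$ and the canonical module $\w$, reducing the claim to a routine chain of natural identifications. The starting point is Hom-tensor adjunction against the injective module $E$, which yields
$$\Hom_R(\Tor^R_1(I, R/I), E) \cong \Ext^1_R(I, \Hom_R(R/I, E)),$$
so it will suffice to rewrite the inner Matlis dual $\Hom_R(R/I, E)$ in terms of $I^\vee$.

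First I would note that $I$ is maximal Cohen-Macaulay: an $\m$-primary ideal in a one-dimensional Cohen-Macaulay ring contains a nonzerodivisor (by prime avoidance applied to $\Ass(R)$, whose primes are all minimal), so $\depth(I) \geq 1 = \dim R$. This forces the vanishings $\Ext^i_R(I, \w) = 0$ for $i > 0$, while the injective dimension of $\w$ gives $\Ext^i_R(-, \w) = 0$ for all $i > 1$. Next, local duality applied to the finite length module $R/I$ gives $R/I = H^0_\m(R/I) \cong \Hom_R(\Ext^1_R(R/I, \w), E)$, and Matlis dualizing this finite length isomorphism identifies $\Hom_R(R/I, E) \cong \Ext^1_R(R/I, \w)$. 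Dualizing the short exact sequence $0 \to I \to R \to R/I \to 0$ with $\Hom_R(-, \w)$ (and using $\Hom_R(R/I, \w) = 0$, since $R/I$ is torsion and $\w$ is maximal Cohen-Macaulay) yields
$$0 \to \w \to I^\vee \to \Ext^1_R(R/I, \w) \to 0.$$
Applying $\Ext^1_R(I, -)$ to this sequence, the two vanishings collapse the resulting long exact sequence to an isomorphism $\Ext^1_R(I, I^\vee) \cong \Ext^1_R(I, \Ext^1_R(R/I, \w))$. Chaining these identifications together gives $\Hom_R(\Tor^R_1(I, R/I), E) \cong \Ext^1_R(I, I^\vee)$, and since $\Tor^R_1(I, R/I)$ is of finite length (as $R/I$ is), a second application of Matlis duality produces the first claim.

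For the "in particular" statement, if $\m \Ext^1_R(I, I^\vee) = 0$ then $\Ext^1_R(I, I^\vee)$ is a finite-dimensional $k$-vector space, and its Matlis dual has the same $k$-dimension, so the two sides are abstractly isomorphic as $R$-modules. There is no real obstacle here: the argument is a string of standard moves, and the only nontrivial input is securing the Ext vanishings against $\w$, which are immediate once one records that $I$ is maximal Cohen-Macaulay and that $\w$ has injective dimension one.
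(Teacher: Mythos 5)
Your proof is correct, and it takes a genuinely different (and more self-contained) route than the paper's. The paper's argument is a two-line affair: dimension shifting along $0 \to I \to R \to R/I \to 0$ gives $\Ext^1_R(I,I^\vee)\cong\Ext^2_R(R/I,I^\vee)$, and then the result is read off from a black-box duality lemma, \cite[Lemma 3.5(2)]{KO22}, which relates $\Ext$ into $M^\vee$ with finite-length arguments to Matlis duals of $\Tor$ with $M$. You instead unpack the content of that lemma from scratch: start with the Hom-tensor adjunction $\Hom_R(\Tor^R_1(I,R/I),E)\cong\Ext^1_R(I,\Hom_R(R/I,E))$ (valid because $E$ is injective), identify $\Hom_R(R/I,E)\cong\Ext^1_R(R/I,\w)$ by local duality on the finite-length module $R/I$, and then use the explicit short exact sequence $0\to\w\to I^\vee\to\Ext^1_R(R/I,\w)\to 0$ together with the vanishings $\Ext^1_R(I,\w)=0$ (since $I$ is maximal Cohen-Macaulay) and $\Ext^{\ge 2}_R(I,\w)=0$ (since $\operatorname{id}_R\w=1$) to conclude $\Ext^1_R(I,\Ext^1_R(R/I,\w))\cong\Ext^1_R(I,I^\vee)$. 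The paper's version buys brevity by outsourcing the duality; yours buys transparency and makes every ingredient visible. One small simplification available in your write-up: local duality already gives $\Ext^1_R(R/I,\w)\cong\Hom_R(H^0_\m(R/I),E)=\Hom_R(R/I,E)$ directly, so there is no need to apply Matlis duality twice at that step. The ``in particular'' argument is handled correctly: a module killed by $\m$ is a finite $k$-vector space and is (non-canonically) isomorphic to its own Matlis dual.
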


\begin{proof}
 By dimension shifting, we have $\Ext^1_R(I,I^{\vee}) \cong \Ext^2_R(R/I,I^{\vee})$. As $I$ is $\m$-primary, $R/I$ has finite length, and then the claim follows from \cite[Lemma 3.5 (2)]{KO22}. 
\end{proof}

\begin{cor}\label{extvec}
Suppose $R$ is a Cohen-Macaulay ring of dimension $1$ with minimal multiplicity, and suppose $R$ admits a canonical ideal $I$. Let $L:=\Ann_R(\Ext^1_R(I,R))$. If $\bar{L}=\m$, then $\Ext^1_R(I,R) \cong k^{\oplus (r^2-r-1)}$ and $\delta_1(I) \cong k^{\oplus({r \choose 2}-1)}$. Moreover, in this situation, the natural exact sequences 
\[0 \rightarrow \Tor^R_1(I,R/I) \rightarrow I \otimes_R I \rightarrow I^2 \rightarrow 0\]
and
\[0 \rightarrow \delta_1(I) \rightarrow S^2_R(I) \rightarrow I^2 \rightarrow 0\]
are split.
    
\end{cor}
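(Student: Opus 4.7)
The plan is to first compute the $k$-dimensions of $\Ext^1_R(I,R)$ and $\delta_1(I)$, and then verify the two splittings via a comparison of minimal numbers of generators.

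Since $I$ is a canonical ideal, $\mu_R(I) = r$ and $I^{\vee} = \Hom_R(I,\w) \cong \Hom_R(\w,\w) \cong R$. The hypothesis $\bar L = \m$ forces $R$ to be non-Gorenstein (otherwise $\Ext^1_R(I,R) = 0$ and $L = R$), and hence $I$ is non-principal. Applying Corollary \ref{idealcase} to $J = I$ yields $L = \m$ and $\dim_k \Ext^1_R(I,R) = r\mu_R(I) - r - 1 = r^2 - r - 1$. Because $\m$ annihilates $\Ext^1_R(I,I^{\vee}) \cong \Ext^1_R(I,R)$, Lemma \ref{torsiondual} gives $\Tor^R_1(I,R/I) \cong \Ext^1_R(I,R) \cong k^{\oplus(r^2-r-1)}$. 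Proposition \ref{thetorthing}(3) now applies, yielding $\m\delta_1(I) = 0$ and $\dim_k \Tor^R_1(I,R/I) = \binom{r}{2} + \dim_k \delta_1(I)$, so $\delta_1(I) \cong k^{\oplus(\binom{r}{2}-1)}$.

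For the splittings, the starting point is the identity $e(R) = r+1$, which holds for any one-dimensional Cohen-Macaulay local ring of minimal multiplicity that is not a DVR: for a minimal reduction $y$ of $\m$, we have $\m^2 = y\m$, whence $\Soc(R/(y)) = \m/(y)$ and $\dim_k \m/(y) = \mu_R(\m) - 1 = e(R) - 1 = r(R)$. Since $I$ is torsion-free without nonzero free summand (as $r \geq 2$), Theorem \ref{timesIUlrich} makes $I^2$ an Ulrich module, so $\mu_R(I^2) = e_R(I^2) = e(R) = r + 1$. Additivity of minimal numbers of generators then holds in both sequences: for $(a)$, $\mu_R(I \otimes_R I) = r^2 = (r^2 - r - 1) + (r + 1)$; for $(b)$, $\mu_R(S^2_R(I)) = \binom{r+1}{2} = (\binom{r}{2} - 1) + (r + 1)$.

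The splittings then follow from the general principle that any short exact sequence $0 \to T \to B \to C \to 0$ with $\m T = 0$ and $\mu_R(B) = \mu_R(T) + \mu_R(C)$ splits. Indeed, $\m T = 0$ implies $T \otimes_R k = T$, so tensoring with $k$ produces an exact sequence $T \to B/\m B \to C/\m C \to 0$, and the additivity of $\mu_R$ forces $T \hookrightarrow B/\m B$ to be injective. Lifting a $k$-basis of $T$ and extending to a minimal generating set of $B$ yields a submodule $B' \subseteq B$ generated by the complementary elements with $T \cap B' = 0$ (using $\m T = 0$ to kill the coefficient-in-$\m$ contributions) and $T + B' = B$, so $B = T \oplus B'$ and $B' \xrightarrow{\sim} C$. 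The main delicacy lies in this last step: additivity of $\mu_R$ alone would be insufficient for splitting without the hypothesis $\m T = 0$, which is precisely what lets the lifted basis of $T$ meet the complement only trivially.
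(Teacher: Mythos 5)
Your proof is correct and takes essentially the same route as the paper: Corollary \ref{idealcase} for the $\Ext$ dimension, Lemma \ref{torsiondual} to pass to $\Tor$, Proposition \ref{thetorthing}(3) for $\delta_1(I)$, Theorem \ref{timesIUlrich} to get $\mu_R(I^2)=e(R)=r+1$, and then additivity of minimal generator counts together with the socle condition $\m T=0$ to split both sequences. You spell out the elementary splitting lemma and the identity $e(R)=r+1$ in more detail than the paper, but the underlying argument is the same.
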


\begin{proof}

It follows from Corollary \ref{idealcase} that $\Ext^1_R(I,R) \cong k^{\oplus (r^2-r-1)}$. From Lemma \ref{torsiondual}, we have $\Tor^R_1(I,R/I) \cong k^{\oplus (r^2-r-1)}$ as well, and then Lemma \ref{thetorthing} (3) forces $\m \delta_1(I)=0$ with $\dim_k(\delta_1(I))=r^2-r-1-\displaystyle {r \choose 2}={r \choose 2}-1$. 

By Theorem \ref{timesIUlrich}, we have that $I^2$ is Ulrich, so $\mu_R(I^2)=e(R)=r+1$. For the exact sequence
\[0 \to \Tor^R_1(I,R/I) \to I \otimes_R I \to I^2 \to 0\]
we observe that $\mu_R(I \otimes_R I)=r^2=r+1+r^2-r-1=\mu_R(\Tor^R_1(I,R/I))+\mu_R(I^2)$. It follows that a minimal generating set for $\Tor^R_1(I,R/I)$ is part of a minimal generating set for $I \otimes_R I$, but these are minimal generators of $I\otimes_R I$ which belong to its socle, so the sequence splits. That the sequence
\[0 \to \delta_1(I) \to S^2_R(I) \to I^2 \to 0\]
splits follows from a similar argument.
\end{proof}

The next proposition, while simple, gives some criteria under which an ideal that is Ulrich as an $R$-module must be equal to $\m$, and will be used repeatedly.



\begin{prop}\label{idealulrichred}

Let $R$ be a Cohen-Macaulay local ring of dimension $1$. Let $I$ be an $\m$-primary ideal of $R$ that is Ulrich as an $R$-module. If $\bar{I}=\m$, then $I=\m$.     
\end{prop}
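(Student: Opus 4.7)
The plan is to locate a minimal reduction $(y)$ of $\m$ sitting inside $I$ and then exploit the Ulrich relation $\m I=yI$ to force every minimal generator of $\m$ into $I$.

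After a faithfully flat residue field extension (which is harmless since the Ulrich property, integral closures of ideals, and the equality $I=\m$ all behave well under it), I may assume $k$ is infinite, so that minimal reductions exist. Since $\bar I=\m$, any reduction of $I$ is automatically a reduction of $\m$, so I choose $(y)$ to be a minimal reduction of $I$. Then $y\in I$ is a nonzerodivisor with $\bar{(y)}=\bar I=\m$, and because analytic spread in dimension one is $1$, the principal ideal $(y)$ is itself a minimal reduction of $\m$. The Ulrich hypothesis then yields $\m I=yI$.

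Next I verify that $y$ is a minimal generator of $\m$: if $y\in\m^2$, then combining with the reduction identity $\m^{N+1}=y\m^N$ for large $N$ gives $\m^{N+1}\subseteq\m^{N+2}$, whence $\m^{N+1}=0$ by Nakayama --- absurd. So $y$ extends to a minimal generating set $\m=(y,w_1,\ldots,w_s)$ with $s=\mu_R(\m)-1$.

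The final step is a short manipulation. Expanding $\m I=yI+w_1I+\cdots+w_sI$ and comparing with $yI$ yields $w_jI\subseteq yI$ for every $j$. Specializing to the element $y\in I$, the containment $w_jy\in yI$ gives $w_jy=yu_j$ for some $u_j\in I$; canceling the nonzerodivisor $y$ produces $w_j=u_j\in I$. Thus $\m=(y,w_1,\ldots,w_s)\subseteq I\subseteq\bar I=\m$, forcing $I=\m$. The only delicate point in this plan is ensuring that $y\in I$ can simultaneously be chosen as a minimal reduction of $\m$, which is why it is essential to start from a minimal reduction of $I$ rather than of $\m$; everything after that is essentially mechanical.
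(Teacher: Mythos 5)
Your proof is correct and is essentially the same argument as the paper's. Both hinge on producing a principal minimal reduction $(y)$ of $\m$ with $y\in I$ (you obtain it as a minimal reduction of $I$, using $\bar I=\m=\bar\m$; the paper asserts it directly), then invoking the Ulrich identity $\m I=yI$ and cancelling the nonzerodivisor $y$ from $\m y\subseteq yI$ to get $\m\subseteq I$. Your intermediate step verifying $y\notin\m^2$ and picking a minimal generating set is harmless but unnecessary: once $\m I=yI$ and $y\in I$, one has $\m y\subseteq \m I=yI$ and cancellation immediately gives $\m\subseteq I$, with no need to single out generators of $\m$.
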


\begin{proof}
Extending the residue field if needed, we may suppose $k$ is infinite. Then as $\bar{I}=\m$, we have that $I$ contains a minimal reduction $y$ of $\m$. Then there is an exact sequence 

\[0 \to (y)/yI  \to I/yI \to I/(y) \to 0.\]

Since $I$ is Ulrich as an $R$-module, $I/yI \cong k^{\oplus \mu_R(I)}$. In particular, $\m \subseteq \Ann_R((y)/yI)=(yI:y)$. But since $y$ is a nonzerodivisor, $(yI:y)=I$, so $I=\m$.    
\end{proof}

\subsection{Numerical Semigroup Rings}\label{numericalsemigroupsection}

By a numerical semigroup ring, we mean ambiguously either the positively graded ring $k[t^{a_1},\dots,t^{a_n}]$ or its completion $k[\![t^{a_1},\dots,t^{a_n}]\!]$ where $a_1<a_2<\ldots<a_n$. All the properties we are interested in will ascend to and descend from the completion in this case, so we will work in practice with the positively graded ring $k[t^{a_1},\dots,t^{a_n}]$, and will frequently exploit its graded structure. For numerical semigroup rings, it is well-known that $e(R)=a_1$ \cite[Proposition 2.10]{rosales2009numerical} and that $\mathfrak{c}_R=(t^{c+i})_{0\leq i\leq e(R)-1}$, where $c-1$ is the largest integer not present in the semigroup generated by the integers $a_i$ (see for instance \cite[Remark 2.1]{MM23}). Further, the type of the numerical semigroup ring is given by the cardinality of the set of pseudo-Frobenius numbers (see \cite[Theorem 3.1]{S18} or \cite[Theorem 3.2]{MM23} for further details). A key point to the structure of $k[t^{a_1},\dots,t^{a_n}]$ is that a homogeneous element of degree $d$ is uniquely determined up to multiplication by an element of $k$ as $t^d$. The fact leads to several deeper consequences, beginning with the following elementary lemma that serves as a key point in what follows.

\begin{lem}\label{decompcolonideal}

Suppose $R$ is a numerical semigroup ring and that $L$ is a homogeneous ideal of $R$ with minimal homogeneous generating set $(x_1,\dots,x_m)$. For any $i$ and any homogeneous $a \in ((x_1,\dots,\hat{x_i},\dots,x_m):x_i)$, we have that $a \in (x_j:x_i)$ for some $i \ne j$. In particular,
\[((x_1,\dots,\hat{x_i},\dots,x_m):x_i)=\sum_{j \ne i} (x_j:x_i).\]

\end{lem}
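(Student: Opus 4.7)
The plan is to exploit the fact that in a numerical semigroup ring $R=k[t^{a_1},\dots,t^{a_n}]$ each graded component $R_d$ is either $0$ or one-dimensional over $k$ (spanned by $t^d$). In particular, any two nonzero homogeneous elements of the same degree differ by a nonzero scalar in $k^\times$. This rigidity forces ``collisions'' in any homogeneous relation to collapse to a single factor.

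First, the ``In particular'' statement reduces to the main claim: the inclusion $\sum_{j\ne i}(x_j:x_i)\subseteq((x_1,\dots,\hat{x_i},\dots,x_m):x_i)$ is automatic, and for the reverse inclusion we note that the colon of a homogeneous ideal by a homogeneous element is homogeneous, so any element of $((x_1,\dots,\hat{x_i},\dots,x_m):x_i)$ decomposes into homogeneous components each of which again lies in this colon ideal, whence the claim would transfer from the homogeneous case to the general case.

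Next, to handle a homogeneous $a$ (which we may assume nonzero), I would write $a x_i = \sum_{j\ne i} s_j x_j$ with $s_j\in R$ and then isolate, within each $s_j$, the unique homogeneous summand $r_j$ of degree $|a|+|x_i|-|x_j|$. After this truncation we still have $a x_i=\sum_{j\ne i}r_j x_j$, but now every term is homogeneous of degree $|a|+|x_i|$. By the one-dimensionality of the graded components, each nonzero $r_j x_j$ is a nonzero scalar multiple of $t^{|a|+|x_i|}$, and similarly $a x_i$ is a nonzero scalar multiple of $t^{|a|+|x_i|}$.

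The final step is to observe that since $a x_i\ne 0$, at least one summand $r_j x_j$ must be nonzero, and for any such $j$ the equality $a x_i = \lambda\,(r_j x_j)$ for some $\lambda\in k^\times$ exhibits $a x_i$ as an element of $(x_j)$, giving $a\in(x_j:x_i)$ with $j\ne i$. There is no real obstacle here beyond the careful bookkeeping with degrees; the content of the lemma is essentially that the uniqueness of monomials in each graded piece prevents the genuine ``mixing'' of generators that could occur in a less rigid graded ring.
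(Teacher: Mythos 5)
Your proof is correct and follows essentially the same route as the paper: both arguments hinge on the fact that each graded piece of a numerical semigroup ring is at most one-dimensional, so that a nonzero homogeneous element of $(x_1,\dots,\hat{x_i},\dots,x_m)$ must be a scalar multiple of $r_j x_j$ for some $j\ne i$. The paper simply asserts this last fact as evident, whereas you supply the degree-truncation argument that justifies it; the underlying idea is identical.
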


\begin{proof}

If $a$ is a homogeneous element of $(x_1,\dots,\hat{x_i},\dots,x_m):x_i)$, then $ax_i$ is a homogeneous element of $(x_1,\dots,\hat{x_i},\dots,x_m)$. But every homogeneous element of $(x_1,\dots,\hat{x_i},\dots,x_m)$ is a multiple of $x_j$ for some $j \ne i$, so $a \in (x_j:x_i)$. The latter claim follows as we evidently have $\sum_{j \ne i} (x_j:x_i) \subseteq ((x_1,\dots,\hat{x_i},\dots,x_m):x_i)$.

\end{proof}

\begin{prop}\label{colontocycle}

Let $I$ be an ideal of $R$ of positive grade and suppose $(x_1,\dots,x_m)$ is a minimal generating set for $I$ where each $x_i$ is a nonzerodivisor (such a generating set always exists by prime avoidance). For each $i \ne j$, there is an injective map $p_{ij}:(x_i:x_j) \to Z_1(I)$ given by $a \mapsto ae_j-\dfrac{ax_j}{x_i}e_i$. Moreover, we have the following:

\begin{enumerate}

\item[$(1)$] The map 
\[q_i
:\bigoplus_{j \ne i} (x_j:x_i) \to Z_1(I)\] and the map 
\[q^j
:\bigoplus_{i \ne j} (x_j:x_i) \to Z_1(I)\] 
given on components by $p_{
ji}$ are injective for a fixed $i$ or a fixed $j$, respectively.

\item[$(2)$] Each map $p_{ji}$ restricts to a map $(x_jI:x_i) \cap I \to Z_1(I) \cap IR^{\oplus m}$.

\end{enumerate}

\end{prop}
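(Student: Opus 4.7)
The plan is to verify the claims mostly by direct, componentwise bookkeeping, with the nonzerodivisor hypothesis on the $x_i$'s doing the essential work. First I would check that each $p_{ij}$ is well-defined: given $a \in (x_i:x_j)$, the containment $ax_j \in (x_i)$ together with the fact that $x_i$ is a nonzerodivisor produces a unique $b \in R$ with $ax_j = bx_i$, which legitimates the notation $\frac{ax_j}{x_i}$. Then $p_{ij}(a) = a e_j - \frac{ax_j}{x_i} e_i$ maps under the Koszul differential $e_k \mapsto x_k$ to $ax_j - \frac{ax_j}{x_i} x_i = 0$, so $p_{ij}(a) \in Z_1(I)$. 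Injectivity of $p_{ij}$ is immediate: the $e_j$-coefficient of $p_{ij}(a)$ is $a$ itself (using $i \ne j$), so $p_{ij}(a) = 0$ forces $a = 0$.

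For (1), the main thing to keep straight is the index swap: the summand $(x_j:x_i)$ of the domain of $q_i$ is mapped into $Z_1(I)$ by $p_{ji}$, sending $a \mapsto a e_i - \frac{a x_i}{x_j} e_j$. So for $(a_j)_{j \ne i} \in \bigoplus_{j \ne i}(x_j:x_i)$ we compute
\[
q_i\bigl((a_j)_{j \ne i}\bigr) \;=\; \Bigl(\sum_{j \ne i} a_j\Bigr) e_i \;-\; \sum_{j \ne i} \frac{a_j x_i}{x_j}\, e_j.
\]
Looking at the $e_j$-coefficient for each $j \ne i$ yields $\frac{a_j x_i}{x_j} = 0$, hence $a_j x_i = 0$, and the nonzerodivisor hypothesis on $x_i$ forces $a_j = 0$. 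This proves $q_i$ is injective. For $q^j$, given $(a_i)_{i \ne j}$, the same sum viewed with $j$ fixed has $e_i$-coefficient equal to $a_i$ for each $i \ne j$, so injectivity is even more direct.

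For (2), I would simply track when $p_{ji}(a) = a e_i - \frac{a x_i}{x_j} e_j$ belongs to $IR^{\oplus m}$: this requires both $a \in I$ and $\frac{a x_i}{x_j} \in I$. The second condition unwinds to $a x_i \in x_j I$, i.e., $a \in (x_j I : x_i)$. Thus the restriction of $p_{ji}$ to $(x_j I : x_i) \cap I$ lands in $Z_1(I) \cap IR^{\oplus m}$, as claimed. There is no serious obstacle here; the only subtlety is keeping the index conventions and the implicit quotient notation $\frac{ax_j}{x_i}$ consistent, both of which are pinned down by the nonzerodivisor hypothesis.
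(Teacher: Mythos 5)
Your proof is correct and follows essentially the same approach as the paper: direct componentwise verification, with the nonzerodivisor hypothesis on the $x_i$'s making the division notation well-defined and forcing injectivity by looking at the $e_j$-coefficient for $j \ne i$ (for $q_i$) or the $e_i$-coefficients (for $q^j$). If anything, your write-up is a bit more careful than the paper's, which contains small typographical slips in its computation of $q_i$ and in its reference to $p_{ij}$ versus $p_{ji}$ in part (2); you have the indices and quotients right.
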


\begin{proof}

The existence and injectivity of the maps $p_{ij}$ is evident from the fact that each $x_i$ is a nonzerodivisor. For claim $(1)$, if $\sum_{j \ne i} p_{ji}(z_j)=\sum_{j \ne i} z_je_i-\sum_{j \ne i}\dfrac{ax_j}{x_i}e_i=0$, then we immediately obtain that $z_j=0$. 
That $q^j$ is injective follows similarly.

For claim $(2)$, if $a \in (x_jI:x_i) \cap I$, then both $a$ and $\dfrac{ax_j}{x_i}$ are in $I$, so $p_{ij}(a) \in Z_1(I) \cap IR^{\oplus m}$, as desired.
    \end{proof}

Lemma \ref{decompcolonideal} and Proposition \ref{colontocycle} put severe constraints on what the presentation matrix of a homogeneous ideal in a numerical semigroup ring can look like:

\begin{prop}\label{matrixtwocolsup}
Suppose $R$ is a numerical semigroup ring and that $L$ is a homogeneous ideal in $R$ with minimal homogeneous generating set $(x_1,\dots,x_m)$. For each $i \ne j$, the map $p_{ji}:(x_j:x_i) \to Z_1(I)$ described in Proposition \ref{colontocycle} is homogeneous of degree $|x_i|$. Moreover, we have the following:

\begin{enumerate}
\item[$(1)$] The maps 
\[q_i:\bigoplus_{j \ne i} (x_j:x_i)(-|x_i|) \to Z_1(I)\]
and 
\[q^j:\bigoplus_{i \ne j} (x_j:x_i)(-|x_i|) \to Z_1(I)\]
are homogeneous of degree $0$. 
\item[$(2)$] The map $q:\bigoplus_{i=1}^m \bigoplus_{j \ne i} (x_j:x_i)(-|x_i|) \to Z_1(I)$ given on components as $p_{ji}(-|x_i|)$ is surjective.
    
\end{enumerate}

In particular, $L$ has a minimal presentation matrix whose columns each have exactly two nonzero entries.

\end{prop}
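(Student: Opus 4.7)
The plan is to first settle the homogeneity claims by a direct degree computation, then prove surjectivity of $q$ by inducting on the smallest index at which a cycle has a nonzero component, and finally read off the presentation matrix statement. Viewing $R^{\oplus m}=\bigoplus_{\ell=1}^m R(-|x_\ell|)$ so that the natural surjection $R^{\oplus m}\to L$ is graded of degree $0$, the basis element $e_i$ sits in degree $|x_i|$. For a homogeneous $a\in (x_j:x_i)$ of degree $d$, the quotient $ax_i/x_j$ is well-defined in $R$ (since $R$ is a domain and $x_j\neq 0$) and lies in degree $d+|x_i|-|x_j|$, so both $ae_i$ and $(ax_i/x_j)e_j$ sit in degree $d+|x_i|$ of $R^{\oplus m}$. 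Hence $p_{ji}$ is homogeneous of degree $|x_i|$, and twisting the source by $(-|x_i|)$ in each case makes $q_i$, $q^j$, and $q$ degree $0$ graded maps.

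For surjectivity of $q$, let $z=\sum_{\ell=1}^m z_\ell e_\ell\in Z_1(L)$ be homogeneous, so each $z_\ell$ is homogeneous. I induct on the smallest index $i$ with $z_i\neq 0$; if no such index exists, then $z=0$. Otherwise the cycle condition gives $z_i x_i=-\sum_{\ell>i} z_\ell x_\ell\in(x_{i+1},\dots,x_m)$, so $z_i$ is a homogeneous element of $((x_{i+1},\dots,x_m):x_i)$. Since $(x_i,x_{i+1},\dots,x_m)$ remains a minimal homogeneous generating set of the ideal it generates (any redundancy among these would contradict minimality of $(x_1,\dots,x_m)$ for $L$), Lemma \ref{decompcolonideal} supplies some $j>i$ with $z_i\in(x_j:x_i)$. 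Then $z-p_{ji}(z_i)\in Z_1(L)$ has zero $e_i$-component, zero $e_\ell$-component for all $\ell<i$, and the only modification happens in the $e_j$-component, which remains homogeneous of the correct degree. Thus the smallest nonzero index strictly exceeds $i$, and the induction closes.

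The claim about minimal presentation matrices follows at once. The kernel of the minimal presentation $R^{\oplus m}\to L$ is exactly $Z_1(L)$, so a minimal homogeneous generating set of $Z_1(L)$ realizes the columns of a minimal presentation matrix. Surjectivity of $q$ shows that $Z_1(L)$ is generated by the homogeneous elements $p_{ji}(a)=ae_i-(ax_i/x_j)e_j$, each of which has exactly two nonzero entries when $a\neq 0$ because $R$ is a domain. Graded Nakayama lets us extract a minimal subset from these generators, and arranging that subset as columns yields the desired matrix. The main obstacle I anticipate is the careful justification that Lemma \ref{decompcolonideal} applies to each truncated tuple $(x_i,\dots,x_m)$ encountered in the induction; this reduces to noting that minimality of a homogeneous generating set is preserved by dropping generators, combined with the flexibility in the lemma allowing the distinguished index to be chosen arbitrarily within the tuple.
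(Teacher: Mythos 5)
Your proof is correct, and the argument is essentially the same as the paper's: both establish homogeneity of $p_{ji}$ by the same degree count, and both prove surjectivity by subtracting a suitable $p_{ji}(z_i)$ from a homogeneous cycle $z$ so as to kill its first nonzero coordinate, invoking Lemma \ref{decompcolonideal} to locate the target index $j$. The only difference is organizational: the paper inducts on the number $m$ of generators and passes to the cycle module $Z_1(\bar I)$ of the truncated ideal $\bar I = (x_2,\dots,x_m)$, then identifies $\bar q$ with a restriction of $q$; you instead run a single downward induction on the smallest nonzero coordinate of $z$, staying inside $Z_1(L)$ throughout. Your version is slightly leaner since it avoids the relabeling step, but it relies on exactly the same observation you flag at the end, namely that any suffix $(x_i,\dots,x_m)$ of a minimal homogeneous generating set is itself minimal for the ideal it generates, so that Lemma \ref{decompcolonideal} applies; the paper needs the analogous fact for $(x_2,\dots,x_m)$ and leaves it implicit.
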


\begin{proof}

If $a \in (x_j:x_i)$ is a homogeneous element, then as the degree of $e_i$ is $|x_i|$ for each $i$, we have $|ae_i|=|a|+|x_i|$ and similarly that $|\dfrac{ax_i}{x_j}e_j|=|a|+|x_i|-|x_j|+|x_j|=|a|+|x_i|$. It follows that $p_{ji}(a)=ae_i-\dfrac{ax_i}{x_j}e_j$ is a homogeneous element of $Z_1(I)$ of degree $|a|+|x_i|$, so $p_{ji}$ is a graded map of degree $|x_i|$. Claim $(1)$ now follows immediately. 

For claim $(2)$, we proceed by induction on $m$, noting the claim is clear when $m=1$. Suppose $z:=\sum_{i=1}^m a_ie_i$ is a homogeneous element in $Z_1(I)$ so in particular each $a_i$ is homogeneous. Then as $\sum_{i=1}^m a_ix_i=0$, we have $a_1 \in ((x_2,\dots,x_m):x_1)$, and Lemma \ref{decompcolonideal} implies that $a_1 \in (x_{j_1}:x_1)$ for some $j_1 \ne 1$. Then $a_1x_1=\dfrac{a_1x_1}{x_{j_1}}x_{j_1}$, we have $\sum_{i=1}^m a_ix_i=\sum_{i=2}^m a_ix_i+\dfrac{a_1x_1}{x_{j_1}}x_{j_1}=0$ so setting $\bar{I}=(x_2,\dots,x_m)$, we have $\sum_{i=2}^m a_ie_i+\dfrac{a_1x_1}{x_{j_1}}e_{j_1} \in Z_1(\bar{I})$. By induction $\sum_{i=2}^m a_ie_i+\dfrac{a_1x_1}{x_{j_1}}e_{j_1}$ is in the image of the map 
\[\bar{q}:\bigoplus^m_{i=2} \bigoplus_{j \ne i} (x_j:x_i)(-|x_i|) \to Z_1(\bar{I})\]
given on components as $p_{ji}$. In particular, $\sum_{i=2}^m a_ie_i+\dfrac{a_1x_1}{x_{j_1}}e_{j_1}$ viewed as an element of $Z_1(I)$ is in the image of $q$. But $a_1e_1-\dfrac{a_1x_1}{x_{j_1}}e_{j_1}$ is in the image of $p_{j_1 1}$, in particular is in the image of $q$. It follows that $z=a_1e_1-\dfrac{a_1x_1}{x_{j_1}}e_{j_1}+\sum_{i=2}^m a_ie_i+\dfrac{a_1x_1}{x_{j_1}}e_{j_1} \in \im(q)$. Therefore, $q$ is surjective. Note $\bigoplus_{i=1}^m \bigoplus_{j \ne i} (x_j:x_i)(-|x_i|)$ has a minimal homogeneous generating set inherited from the minimal homogeneous generators of the $(x_j:x_i)$, which then maps to a homogeneous generating set for $Z_1(I)$ and may then be trimmed to a minimal one. As the image under $q$ of any such generator is a column vector in $R^{\oplus m}$ with two elements of support, it follows that $L$ has a presentation matrix whose columns each have two elements of support.
\end{proof}



\section{Main Results}\label{main}

In this section we explore when trace ideals and colon ideals contain $\m$ for $R$ a Cohen-Macaulay ring of minimal multiplicity. Our focus lies on the case when $R$ has small dimension. We first note two preliminary cases of interest:
\begin{prop}\label{artinian}
Suppose $R$ is Artinian and has minimal multiplicity, i.e., that $\m^2=0$. Then $R$ is nearly Gorenstein.
\end{prop}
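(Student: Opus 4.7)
The plan is to apply Proposition~\ref{traceprop}(1) to compute $\tr_R(\w)$ directly from a minimal free presentation of $\w$. The Gorenstein case is immediate (with $\tr_R(\w) = R$), so I would reduce at once to the non-Gorenstein case, in which $\w$ is not free and the goal becomes $\tr_R(\w) = \m$. Fix a minimal generating set $x_1,\dots,x_n$ of $\m$, and let $A$ be a minimal presentation matrix for $\w$.

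The key step is to exploit $\m^2 = 0$. By minimality of $A$, every entry of $A$ (and hence of $A^T$) lies in $\m$, so each column of $A^T$ is a vector in $\m R^{\mu_R(\w)}$. Consequently
\[
\m \cdot \im(A^T) \subseteq \m^2 R^{\mu_R(\w)} = 0,
\]
which forces $\im(A^T)$ to be a semisimple $R$-module, isomorphic as an $R$-module to $k^{\oplus d}$ for some integer $d$; note $d\ge 1$ because $\w$ not free implies $A\ne 0$ and hence $A^T\ne 0$. A minimal presentation matrix $B$ of $k^{\oplus d}$ is then the $d\times (dn)$ block-diagonal matrix whose $d$ diagonal blocks are each the row $(x_1,\dots,x_n)$, so $I_1(B) = (x_1,\dots,x_n) = \m$, and Proposition~\ref{traceprop}(1) yields $\tr_R(\w) = \m$.

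I do not anticipate a substantive obstacle here. The one point to be careful about is that $B$ must be chosen to be a \emph{minimal} presentation matrix of $\im(A^T)$ in order for $I_1(B)$ to coincide with the trace ideal, but this is automatic from the canonical minimal resolution of $k^{\oplus d}$. The whole argument rests on the fact that $\m^2=0$ collapses $\im(A^T)$ into a semisimple module whose minimal presentation is completely rigid, after which computing $\tr_R(\w)$ reduces to reading off the generators of $\m$.
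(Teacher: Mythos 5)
Your approach differs from the paper's, which constructs an explicit surjection $\w \twoheadrightarrow \m$ by dualizing $0 \to \m \to R \to k \to 0$ into $\w$; you instead try to read $\tr_R(\w)$ off a presentation matrix for $\im(A^T)$.

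There is a gap, and your ``one point to be careful about'' misidentifies it: what matters is not that $B$ is a \emph{minimal} presentation matrix of $\im(A^T)$, but that $B$ presents $\im(A^T)$ on the specific generating set given by the columns of $A^T$, i.e., that $B$ fits into an exact sequence $R^{c} \xrightarrow{B} R^{\mu_R(\w)} \xrightarrow{A^T} R^{\beta^R_1(\w)}$ so that its columns generate $\ker(A^T)=\w^*\subseteq R^{\mu_R(\w)}$. In Fitting-ideal language the trace equals $\operatorname{Fitt}_{\mu_R(\w)-1}(\im(A^T))$, whereas your minimal presentation of $k^{\oplus d}$ computes $\operatorname{Fitt}_{d-1}(\im(A^T))$; these coincide precisely when $d=\mu_R(\w)$, which you never verify. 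If $d<\mu_R(\w)$ then $\ker(A^T)\supsetneq \m R^{\mu_R(\w)}$, some element of $\w^*$ has a unit entry, and the $I_1$ of the correct presentation is $R$, not $\m$. (One can in fact show $d=\mu_R(\w)$ in the non-Gorenstein case, since a unit entry in $\w^*$ would split a free summand off the indecomposable module $\w$ and force $\w\cong R$; but this needs to be said.) The cleanest repair bypasses Proposition~\ref{traceprop}(1) entirely: since $A^T$ has all entries in $\m$ and $\m^2=0$, one has $\m R^{\mu_R(\w)}\subseteq \ker(A^T)=\w^*$, and $\tr_R(\w)$ is by definition generated by the coordinates of elements of $\w^*\subseteq R^{\mu_R(\w)}$, so $\m\subseteq\tr_R(\w)$ follows at once.
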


\begin{proof}
Since $\m^2=0$, $\m \cong k^{\oplus \mu_R(\m)}$. Then there is an $R$-module isomorphism $\psi:\Hom_R(\m,\w) \cong \m$. Consider the natural short exact sequence
\[0 \rightarrow \m \xrightarrow{i} R \rightarrow k \rightarrow 0.\]
Applying $\Hom_R(-,\w)$ to this exact sequence, we obtain a short exact sequence of the form
\[0 \rightarrow k \rightarrow \w \xrightarrow{\Hom(i,\w)} \Hom_R(\m,w_R) \rightarrow 0\]

Then the composition $q:=i \circ \psi \circ \Hom_R(i,\w)$ is a map $\w \to R$ whose image is $\m$. So for any $y \in \m$, there is an $x \in \w$ so that $q(x)=y$, which means $\theta_R^{\omega_R}(q \otimes x)=y$ (see \Cref{tracedef}). It follows that $\m \subseteq \tr_R(\w)$, so that $R$ is nearly Gorenstein. 
    \end{proof}

\begin{prop}\label{standardgradedreduced}
Suppose $R$ is a standard graded reduced $k$-algebra of dimension $1$ and that $k$ is algebraically closed. If $R$ has minimal multiplicity, then $\mathfrak{c}_R=\m$. In particular, $R$ is nearly Gorenstein.

\end{prop}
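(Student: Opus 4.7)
The plan is to exploit the graded structure to pin down $\bar{R}$ explicitly, use minimal multiplicity to force $R$ and $\bar{R}$ to agree in every positive degree (which gives $\mathfrak{c}_R = \m$ immediately), and then deduce nearly Gorensteinness from the standard inequality $\mathfrak{c}_R \subseteq \tr_R(\w)$.

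First I would identify the normalization. Because $R$ is reduced of dimension one and $k$ is algebraically closed, each quotient $R/\p_i$ (with $\p_i$ a homogeneous minimal prime) is a standard graded one-dimensional domain over $k$ whose $\Proj$ is a single reduced $k$-point; hence $R/\p_i \cong k[t_i]$ as graded rings with $t_i$ in degree $1$. Writing $s$ for the number of minimal primes, we get $\bar{R} \cong \prod_{i=1}^{s} k[t_i]$ with grading inherited from $R$, and $\dim_k \bar{R}_n = s$ for every $n \geq 0$.

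Next I would show $R_n = \bar{R}_n$ for all $n \geq 1$. Since $e(R) = \sum_i e(R/\p_i) = s$, minimal multiplicity forces $\dim_k R_1 = \mu_R(\m) = e(R) = s = \dim_k \bar{R}_1$, so the graded inclusion $R_1 \hookrightarrow \bar{R}_1$ is an equality. Because $R$ is standard graded, $R_n$ contains $r^n$ for every $r \in R_1 = \bar{R}_1$; taking $r = t_i$ yields $t_i^n \in R_n$, and since these span $\bar{R}_n$, we get $R_n = \bar{R}_n$ for every $n \geq 1$. Hence $\bar{R}/R$ is concentrated in degree $0$ with length $s-1$, and since $\m$ annihilates it while $R_0 = k$ acts via the diagonal embedding into $\bar{R}_0 = k^s$, one obtains $\mathfrak{c}_R = \Ann_R(\bar{R}/R) = \m$ (for $s \geq 2$; the case $s = 1$ gives $R \cong k[t]$, for which the nearly Gorenstein conclusion is trivial).

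For the nearly Gorenstein clause, I would invoke the inequality $\mathfrak{c}_R \subseteq \tr_R(\w)$, which is standard for any one-dimensional Cohen-Macaulay ring with canonical module and finite integral closure. In our setting $\bar{R}$ is Gorenstein ($\omega_{\bar{R}} \cong \bar{R}$, since each factor is a polynomial ring), so via the adjunction $\Hom_R(\bar{R}, \w) \cong \omega_{\bar{R}}$ one may realize $\w$ as a fractional ideal with $R \subseteq \w \subseteq \bar{R}$; then $R : \w \supseteq R : \bar{R} = \mathfrak{c}_R$, and so
\[
\tr_R(\w) \;=\; \w \cdot (R : \w) \;\supseteq\; R \cdot \mathfrak{c}_R \;=\; \mathfrak{c}_R \;=\; \m.
\]
The main obstacle is justifying this inequality $\mathfrak{c}_R \subseteq \tr_R(\w)$ (equivalently, producing the fractional-ideal representation $R \subseteq \w \subseteq \bar{R}$); alternatively one can construct a surjection $\w^{\oplus s} \twoheadrightarrow \bar{R}$ from $\Hom_R(\w, \bar{R}) \cong \bar{R}$ and use the monotonicity $\tr_R(\w) \supseteq \tr_R(\bar{R}) = \mathfrak{c}_R$.
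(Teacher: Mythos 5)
Your proof of $\mathfrak{c}_R=\m$ is correct and takes a genuinely different route from the paper's. The paper realizes $R$ as the homogeneous coordinate ring of $e(R)$ points in $\mathbb{P}^{e(R)-1}_k$, invokes the Huneke-Swanson formula $\mathfrak{c}_R=\sum_j I(P_1,\dots,\hat{P}_j,\dots,P_e)$, constructs a linear nonzerodivisor $f\in\mathfrak{c}_R$, verifies via a Hilbert series argument that $f$ is a reduction of $\m$, and then brings in the paper's own Ulrich machinery --- $\mathfrak{c}_R$ is Ulrich by \cite[Corollary 4.10]{DS23}, and Proposition \ref{idealulrichred} forces an $\m$-primary Ulrich ideal with integral closure $\m$ to equal $\m$. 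Your argument instead identifies the normalization $\bar{R}\cong\prod_{i=1}^s k[t_i]$ directly (using that a standard graded one-dimensional domain over an algebraically closed field is a polynomial ring in one degree-one variable, since the defining prime is saturated and cut out by linear forms), compares Hilbert functions: minimal multiplicity gives $\dim_k R_1=\mu_R(\m)=e(R)=s=\dim_k\bar{R}_1$, so $R_1=\bar{R}_1$ inside $\bar{R}$, and since both rings are standard graded this forces $R_n=\bar{R}_n$ for all $n\ge 1$. Hence $\bar{R}/R$ is concentrated in degree zero, $\m\bar{R}=\bar{R}_{>0}=R_{>0}=\m\subseteq R$, and $\m\subseteq\mathfrak{c}_R$. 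This is more elementary: it bypasses the Ulrich theory and the conductor formula entirely, and arguably makes the underlying structural reason (that $R$ and $\bar{R}$ agree in all positive degrees when $R$ has minimal multiplicity) more transparent. You are also more careful than the paper about the degenerate case $s=1$, where $\mathfrak{c}_R=R\neq\m$ but the nearly Gorenstein conclusion holds trivially. For the final step, both you and the paper invoke $\mathfrak{c}_R\subseteq\tr_R(\w)$; the paper simply cites \cite[Corollary 3.6]{DS23}, while your sketch of this inclusion (realizing $\w$ as a fractional ideal between $R$ and $\bar{R}$) has acknowledged gaps --- in a final write-up you should just cite the reference as the paper does rather than re-derive it.
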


\begin{proof}
Let $e:=e(R)$. Then by Nullstellensatz, $R$ is the coordinate ring of $e$ points $P_1,\dots,P_e$ in $\mathbb{P}^{e-1}_k$, so we may write $R \cong k[x_1,\dots,x_e]/I$ where $I=\bigcap_{i=1}^e I(P_i)$ contains no linear form.  

By \cite[Corollary 2.1.13]{HS06}, the conductor is given by 
\[\mathfrak{c}_R=\sum_{j=1}^e I(P_1,\dots,\hat{P}_j,\dots,P_e).\]
Note that any $e-1$ points in $\mathbb{P}^{e-1}_k$ lie on a common hyperplane, so for any $j$, $I(P_1,\dots,\hat{P}_j,\dots,P_e)=\bigcap_{i \ne j} I(P_i)$ contains a homogeneous linear form $f_j$. As $I$ contains no linear forms, it follows that $f_j \notin I(P_j)$ for each $j$. Set $f:=f_1+\cdots+f_e$. If $f \in P_j$ for any $j$, then as $f_i \in P_j$ for all $i \ne j$, this would force $f_j \in P_j$, which is not the case. It follows that $f \notin \bigcup_{i=1}^e I(P_i)$, and so $f$ is a homogeneous linear form in $\mathfrak{c}_R$ that is a nonzerodivisor on $R$. We claim $f$ is a reduction of $\m$. Indeed, from the exact sequence
\[0 \rightarrow R(-1) \xrightarrow{ \cdot f} R \rightarrow R/fR \rightarrow 0,\]
we have that $\epsilon_{R/fR}(t)=H_{R/fR}(t)=(1-t)H_R(t)=\epsilon_R(t)$. So $e(R)=\epsilon_R(1)=\epsilon_{R/fR}(1)=l_R(R/fR)$, and it follows from e.g. \cite[Lemma 2.2]{De16} that $f$ is a reduction of $\m$, as claimed. But it well-known that $\mathfrak{c}_R$ is Ulrich as an $R$-module (see e.g. \cite[Corollary 4.10]{DS23}), and then Proposition \ref{idealulrichred} forces $\mathfrak{c}_R=\m$, as desired. For the claim about nearly Gorensteinness, we note that $\tr_R(\w)$ contains $\mathfrak{c}_R=\m$ for example by \cite[Corollary 3.6]{DS23}, so $R$ is nearly Gorenstein. 
\end{proof}


In light of Propositions \ref{artinian} and \ref{standardgradedreduced}, one may be tempted to believe that Cohen-Macaulay rings of minimal multiplicity will always be nearly Gorenstein. This is far from the case, as seen in the following examples:
\begin{example}[{\cite[Corollary 4.4]{HT19}}]\label{minmultadjoingraded}
Suppose $R$ is positively graded. Then the following are equivalent:
\begin{enumerate}
    \item[$(1)$] $R[x_1,\dots,x_n]$ is nearly Gorenstein.
    \item[$(2)$] $R$ is Gorenstein.
\end{enumerate}
\end{example}

\begin{example}[{\cite[Proposition 4.5]{HT19}}]\label{minmultadjoinlocal}
Suppose $R$ is a local ring. Then the following are equivalent:
\begin{enumerate}
    \item[$(1)$] $R[\![x_1,\dots,x_n]\!]$ is nearly Gorenstein.
    \item[$(2)$] $R$ is Gorenstein.
\end{enumerate}
\end{example}

Examples \ref{minmultadjoingraded} and \ref{minmultadjoinlocal} imply that if $R$ is Cohen-Macaulay but not Gorenstein and has minimal multiplicity, then $R[t]$ in the graded setting and $R[\![t]\!]$ in the local setting are Cohen-Macaulay with minimal multiplicity but are not nearly Gorenstein.

We give some other concrete examples in the setting of numerical semigroup rings:

\begin{example}



Let $R=k[\![t^e,t^{\ell e+1},t^{\ell e+2}, \ldots, t^{\ell e+e-1}]\!]$ for any $e \ge 3$ and $\ell\geq 2$. Note that $R$ has minimal multiplicity. 
It follows from \cite{rosales2009numerical} (cf. \cite[Proof of Corollary 6.2]{He23}) that the largest integer $i$ for which $t^i \notin R$ is $\ell e-1$, and so $\mathfrak{c}_R=(t^{\ell e},t^{\ell e+1},\dots,t^{\ell e+e-1}) \ne \m$. It follows from \cite[Corollary 6.2]{He23} that $R$ is far-flung Gorenstein, i.e., that $\tr_R(\w)=\mathfrak{c}_R$, so $R$ is not nearly Gorenstein.
    
\end{example}

Much more can be said for numerical semigroup rings than in the general case. In large part, this is owed to the graded structure and in particular to the results proven in Section \ref{numericalsemigroupsection}. To this end, we will let $R$ denote either the positively graded numerical semigroup ring $k[t^{a_1},\dots,t^{a_n}]$ or the complete numerical semigroup ring $k[\![t^{a_1},\dots,t^{a_n}]\!]$ for the remainder of this section. As the properties we are interested ascend to and descend from the completion, we will work in practice with $R=k[t^{a_1},\dots,t^{a_n}]$. We set $y:=t^{a_1}$.

The following criterion serves as a key point in the proof of the main theorem of this section.

\begin{theorem}\label{setup}
Let $I$ be a homogeneous ideal for $R$ and let $(x_1,\dots,x_r)$ be a minimal homogeneous generating set for $I$ with $|x_i|<|x_{i+1}|$ for each $1 \le i \le r-1$. Suppose $(x_{\ell}:x_j)(x_j:x_1) \subseteq (y)$ for all $\ell$ and $j$, e.g. $(x_j:x_1)$ is Ulrich for all $j$. If $yB_1(I) \subseteq IZ_1(I)$, then $y \in (x_1:I)$.

\end{theorem}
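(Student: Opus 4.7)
The plan is to deduce, for each $j \ge 2$, that $yx_j \in (x_1)$; this is equivalent to the desired conclusion $y \in (x_1:I)$. Fix $j \ge 2$. By Proposition \ref{colontocycle}, the Koszul relation $\lambda_{1j} := x_j e_1 - x_1 e_j$ coincides with $p_{j1}(x_j)$, and so the hypothesis $yB_1(I) \subseteq IZ_1(I)$ gives $p_{j1}(yx_j) = y\lambda_{1j} \in IZ_1(I)$.

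Using the surjection $q$ of Proposition \ref{matrixtwocolsup}, I would write $IZ_1(I) = \sum_{a\ne b} p_{ab}\bigl(I(x_a:x_b)\bigr)$ and expand
\[
p_{j1}(yx_j) \;=\; \sum_{a\ne b} p_{ab}(\xi_{ab}), \qquad \xi_{ab} \in I(x_a:x_b)\ \text{homogeneous}.
\]
Projecting onto the $e_1$-coordinate, the only nonzero contributions come from the $p_{a1}$ and $p_{1b}$ terms. For the latter, Lemma \ref{flip} provides the identification $c \mapsto cx_b/x_1$ as an isomorphism $(x_1:x_b) \cong (x_b:x_1)$, so these contributions also land in $\sum_{m\ne 1} I(x_m:x_1)$. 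Altogether,
\[
yx_j \in \sum_{m\ne 1} I(x_m:x_1).
\]

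Now I would exploit that $R$ is a numerical semigroup ring, each of whose graded pieces is at most one-dimensional over $k$. Collecting terms of degree $a_1+|x_j|$ in the containment above, I reduce to a single homogeneous equation
\[
yx_j = x_\ell\, u, \qquad u \in (x_m:x_1)\ \text{homogeneous},\ m \ne 1,
\]
for some $\ell \in \{1,\dots,r\}$. The case $\ell = 1$ gives $yx_j \in (x_1)$ directly. The case $\ell \ge j$ is dispatched by degree counting: from $|x_\ell| + |u| = a_1 + |x_j|$ and $|x_\ell| \ge |x_j|$ we get $|u| \le a_1$, and since no positive degree strictly less than $a_1$ occurs in $R$, either $u = 0$ (impossible since $yx_j \ne 0$) or $|u| = 0$, giving $u \in k^{\times}$. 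Then $u \in (x_m:x_1)$ forces $x_1 \in (x_m)$, contradicting $|x_1| < |x_m|$ and the minimality of the given generating set of $I$.

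The hard part will be the remaining case $2 \le \ell < j$; this is where the hypothesis $(x_\ell:x_m)(x_m:x_1) \subseteq (y)$ must enter. For any $a \in (x_\ell:x_m)$ we have $au \in (y)$, say $au = yb$; multiplying $yx_j = x_\ell u$ by $a$ gives $ax_j = x_\ell b$, i.e., $a \in (x_\ell:x_j)$. This yields the containment $(x_\ell:x_m) \subseteq (x_\ell:x_j)$. The most technical step will be to combine this inclusion with induction on $j$ (whose base case $j = 2$ is handled by the previous paragraph without invoking the hypothesis) and another careful degree-and-minimality analysis so as to force $yx_j \in (x_1)$, completing the proof.
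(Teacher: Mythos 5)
Your proposal correctly re-derives the key intermediate fact: from $y B_1(I) \subseteq I Z_1(I)$ one extracts, for each $j \ge 2$, an index $\ell$ with $y \in (x_\ell : x_j)$ (in your notation, $y x_j = x_\ell u$) and $\ell < j$. Your route is more roundabout than the paper's --- the paper observes directly that since $y x_1$ is a homogeneous element of \emph{smallest} degree in $\m I$, the coefficients $b^j_i$ in $y(x_1 e_j - x_j e_1) = \sum b^j_i z^j_i$ must be unit multiples of $x_1$, and then applies Lemma \ref{decompcolonideal} to the $j$th entry of the cycle; you instead use the surjection $q$ of Proposition \ref{matrixtwocolsup}, project to the $e_1$-coordinate, and collect graded pieces. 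Both are fine in spirit, though your $\ell \ge j$ case has a small hole: from $|u| \le a_1$ the dichotomy ``$u = 0$ or $|u| = 0$'' omits $|u| = a_1$ (which occurs precisely when $\ell = j$); this subcase is repaired by the same minimality argument that shows a minimal generator of $I$ cannot lie in $\m I$.

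The real gap is that the inductive step for $2 \le \ell < j$ --- which is exactly where the hypothesis $(x_\ell : x_j)(x_j : x_1) \subseteq (y)$ must do its work --- is left unfinished, and the inclusion $(x_\ell : x_m) \subseteq (x_\ell : x_j)$ you extract from the hypothesis is not the useful consequence; it does not visibly push the induction forward. Here is how the hypothesis actually closes the loop (and this is what the paper does). By the inductive hypothesis $y \in (x_1 : x_\ell)$, so $y x_\ell = w x_1$ where $w := y x_\ell / x_1$ satisfies $w x_1 = y x_\ell \in (x_\ell)$, i.e.\ $w \in (x_\ell : x_1)$. Likewise $u = y x_j / x_\ell$ satisfies $u x_\ell = y x_j \in (x_j)$, so $u \in (x_j : x_\ell)$. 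Applying the hypothesis (with indices permuted) gives $u w \in (x_j : x_\ell)(x_\ell : x_1) \subseteq (y)$, say $u w = y v$; then
\[
y^2 x_j = y\, u\, x_\ell = u\,(y x_\ell) = u\, w\, x_1 = y\, v\, x_1,
\]
and canceling $y$ in the domain $R$ gives $y x_j = v x_1 \in (x_1)$, completing the induction. Until this step is supplied, the proposal does not prove the theorem.
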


\begin{proof}

For any $j \ne 1$, we have $x_1e_j-x_je_1 \in B_1(I)$. Then by assumption, we have $yx_1e_j-yx_je_1 \in IZ_1(I)$. As $yx_1e_j-yx_je_1$ is homogeneous, there exist homogeneous $b^j_1,\dots,b^j_m \in I$ and $z^j_1,\dots,z^j_m \in Z_1(I)$ with 
\[yx_1e_j-yx_je_1=\sum_{i=1}^m b^j_iz^j_i.\]
As $yx_1$ is a homogeneous element of smallest degree in $\m I$, it must be that $b^j_i=u_jx_1$ for some $u_j \in k$ and that $j$th entry of $z^j_i$ is $w_jy$ for some $w_j \in k$. Since $\sum_{k=1}^m(z^{j}_i)_{k}x_k=0$, we know that $w_jy=(z^j_i)_j\in ((x_1,\ldots,\hat{x}_j,\ldots,x_m):x_j)$. Hence Lemma \ref{decompcolonideal} shows that for each $j>1$, there is an $\ell_j$ with $y\in (x_{\ell_j}:x_j)$ and by degree considerations, we get that $\ell_j <j$.

We proceed by induction on $j$ to show that $y \in (x_1:x_j)$ for all $2 \le j \le n$. That $y \in (x_1:x_2)$ follows at once from above, so the base case is established. Now suppose $y \in (x_1:x_s)$ for all $2 \le s \le j-1$. By above, we have that $y \in (x_{\ell}:x_j)$ for some $1 \le \ell<j$, and we also have from inductive hypothesis that $y \in (x_1:x_{\ell})$. If $\ell=1$ there is nothing to prove, so we may suppose $\ell>1$. Then $yx_j=sx_{\ell}$ and $yx_{\ell}=wx_1$ for some homogeneous $s,w \in R$. Note that $s,w \in \m$, as $|x_1|<|x_{\ell}|<|x_j|$. But then since $(x_{\ell}:x_j)(x_j:x_1) \subseteq (y)$ for all $\ell,j$, we have 
\[y^2x_j=ysx_{\ell}=swx_1=yvx_1\] where $v$ must be in $\m$ by degree considerations. But as $R$ is a domain, we have $yx_j=vx_1$. So we in fact have $y \in (x_1:x_j)$. Induction forces $y \in (x_1:x_j)$ for all $2 \le j \le n$, but then $y \in \bigcap_{j=2}^n (x_1:x_j)=(x_1:I)$, as desired.
\end{proof}

\begin{theorem}\label{keypoint}

Suppose $R$ has minimal multiplicity and that $I$ is a homogeneous nonprincipal ideal in $R$. Let $x_1,\dots,x_n$ be a minimal homogeneous generating set for $I$ with $|x_1|<|x_2|<\cdots <|x_n|$. Let $B_1(I)$ and $Z_1(I)$ denote the Koszul boundaries and Koszul cycles with respect to this generating set. Then $yB_1(I) \subseteq IZ_1(I)$ if and only if $y \in (x_1:I)$. 

\end{theorem}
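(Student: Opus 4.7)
I plan to prove the two directions separately. The implication $y \in (x_1:I) \Rightarrow yB_1(I) \subseteq IZ_1(I)$ will be a direct computation that does not even use the minimal multiplicity hypothesis, while the reverse will be a clean application of Theorem \ref{setup} once we verify its Ulrich hypothesis by means of Lemma \ref{dualcolontraceulrich}.

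For the easy direction, $B_1(I)$ is generated as an $R$-module by the Koszul relations $\{x_ie_j-x_je_i\}_{i<j}$. Assuming $y\in(x_1:I)$, for each index $i$ we may write $yx_i=s_ix_1$ for a unique $s_i\in R$ (since $R$ is a domain). Then
\[
y(x_ie_j-x_je_i)=s_ix_1e_j-s_jx_1e_i=x_1(s_ie_j-s_je_i).
\]
Multiplying $s_ix_j-s_jx_i$ by $x_1$ gives $s_ix_1x_j-s_jx_1x_i=yx_ix_j-yx_jx_i=0$, so since $x_1$ is a nonzerodivisor $s_ix_j=s_jx_i$, whence $s_ie_j-s_je_i\in Z_1(I)$. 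Thus $y(x_ie_j-x_je_i)\in x_1Z_1(I)\subseteq IZ_1(I)$ on each generator of $B_1(I)$, giving $yB_1(I)\subseteq IZ_1(I)$.

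For the hard direction, I plan to invoke Theorem \ref{setup}, whose hypothesis is that $(x_\ell:x_j)(x_j:x_1)\subseteq(y)$; inspection of its proof shows one only needs this for $\ell\neq j$ (in fact, only for $\ell<j$ arising in the induction). The key step is to verify that $(x_j:x_1)$ is Ulrich for each $j\ge 2$. To do this, I set $J:=(x_1,x_j)$, which is nonprincipal since otherwise $x_j\in(x_1)$ would contradict the minimality of the generating set $(x_1,\dots,x_n)$. By Lemma \ref{dualcolontraceulrich}(3) applied with the nonzerodivisor $x_1\in J$, the colon $(x_1:J)=(x_1:x_j)$ is Ulrich. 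The map $r\mapsto rx_1/x_j$ is a well-defined $R$-module isomorphism $(x_j:x_1)\xrightarrow{\sim}(x_1:x_j)$ (with inverse $s\mapsto sx_j/x_1$, both well-defined since $R$ is a domain), so $(x_j:x_1)$ is Ulrich as well.

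To finish, for $\ell\neq j$ minimality of the generating set forces $x_j\notin(x_\ell)$, hence $(x_\ell:x_j)\subseteq\m$. Combined with the Ulrich relation $\m(x_j:x_1)=y(x_j:x_1)$, this yields
\[
(x_\ell:x_j)(x_j:x_1)\subseteq \m(x_j:x_1)=y(x_j:x_1)\subseteq(y),
\]
so Theorem \ref{setup} applies and gives $y\in(x_1:I)$. The main obstacle is recognizing that Lemma \ref{dualcolontraceulrich}(3), combined with the colon isomorphism $(x_j:x_1)\cong(x_1:x_j)$ coming from the domain property, supplies precisely the Ulrich input required by Theorem \ref{setup}; once this is identified, the remainder is degree-theoretic bookkeeping specific to the graded numerical semigroup setting.
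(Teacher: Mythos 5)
Your proof is correct and well-structured, but it differs from the paper's proof in two modest ways, both of which are worth noting.

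For the forward direction ($y\in(x_1:I)\Rightarrow yB_1(I)\subseteq IZ_1(I)$), you give a self-contained, direct computation on the Koszul boundary generators $x_ie_j-x_je_i$. The paper instead routes through a chain of implications: $y\in(x_1:I)\Rightarrow y\in\tr_R(I)$ (Proposition \ref{traceprop}(3)) $\Rightarrow y\Ext^1_R(I,I^\vee)=0$ (via \cite[Lemma 2.2]{DK21} and Proposition \ref{traceprop}(4)) $\Rightarrow y\Tor^R_1(I,R/I)=0$ (Lemma \ref{torsiondual}) $\Rightarrow y(B_1(I)/IZ_1(I))=0$ (Proposition \ref{thetorthing}). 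Your argument is more elementary and, as you note, works in any domain without the minimal-multiplicity hypothesis; the paper's version has the advantage of making explicit the chain of equivalences that is then reused wholesale in Theorem \ref{generalideals}. Both are valid.

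For the reverse direction, you and the paper both reduce to Theorem \ref{setup} and both verify the Ulrich hypothesis via Lemma \ref{dualcolontraceulrich}(3), but through slightly different routes. The paper uses the colon identity $(x_j:x_1)=(x_j:(x_1,x_j))$ directly, with nonzerodivisor $x_j$; you instead apply the lemma with nonzerodivisor $x_1$ to get $(x_1:x_j)$ Ulrich and then transport the Ulrich property across the isomorphism $(x_j:x_1)\cong(x_1:x_j)$ (essentially the content of Lemma \ref{flip}). Either works; the paper's is one step shorter. You also explicitly justify why the ``e.g.'' in the statement of Theorem \ref{setup} is legitimate — that $(x_\ell:x_j)\subseteq\m$ for $\ell\neq j$ by minimality, which combined with Ulrichness of $(x_j:x_1)$ gives the required containment in $(y)$ — whereas the paper leaves this to the phrasing of Theorem \ref{setup}'s hypotheses. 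One tiny slip: when tracing the indices in the proof of Theorem \ref{setup}, the hypothesis is invoked in the form $(x_j:x_\ell)(x_\ell:x_1)$ with $\ell<j$, i.e.\ with the first index \emph{larger} than the second, so the parenthetical ``$\ell<j$'' should read ``$\ell>j$'' in the theorem's own indexing — but since your actual argument handles all $\ell\neq j$, this does not affect correctness.
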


\begin{proof}
First suppose that $y \in (x_1:I)$. Then by Proposition \ref{traceprop} (3) we have $y \in \tr_R(I)$. Then combining \cite[Lemma 2.2]{DK21} and Proposition \ref{traceprop} (4), we have that $y \Ext^1_R(I,I^{\vee})=0$, and from Lemma \ref{torsiondual} we have $y \Tor^R_1(I,R/I)=0$. From Proposition \ref{thetorthing}, it follows that that $y (B_1(I)/IZ_1(I))=0$. In particular, we have $yB_1(I) \subseteq IZ_1(I)$.

Now suppose instead that $yB_1(I) \subseteq IZ_1(I)$. Note that $(x_j:x_1)=(x_j:(x_1,x_j))$, and then Lemma \ref{dualcolontraceulrich} (3) forces $(x_j:x_1)$ to be Ulrich. Theorem \ref{setup} then forces $y \in (x_1:I)$, as desired. 

\end{proof}


\begin{theorem}\label{generalideals}

Suppose $R$ has minimal multiplicity and that $I$ is a homogeneous nonprincipal ideal in $R$. Let $x_1,\dots,x_n$ be a minimal homogeneous generating set for $I$ with $|x_1|<|x_2|<\cdots <|x_n|$. Then the following are equivalent:

\begin{enumerate}

\item[$(1)$] $(x_1:I)=\m$.
\item[$(2)$] $\tr_R(I)=\m$.
\item[$(3)$] $\m \Ext^i_R(I,I^{\vee})=0$ for all $i>0$.
\item[$(4)$] $\m \Ext^1_R(I,I^{\vee})=0$.
\item[$(5)$] $\m \Tor^R_1(I,R/I)=0$.
\item[$(6)$] $\m B_1(I) \subseteq IZ_1(I)$.
\item[$(7)$] $y B_1(I) \subseteq IZ_1(I)$.
\item[$(8)$] $y \in (x_1:I)$.
\item[$(9)$] $\m \bigwedge^2_R(I)=0$.
\item[$(10)$] $y\Ext^i_R(I,I^\vee)=0$ for all $i>0$.
\item[$(11)$] $y\Ext^1_R(I,I^\vee)=0$.
\item[$(12)$] $y \Tor^R_1(I,R/I)=0$.
\item[$(13)$] $y \bigwedge^2_R(I)=0$.
\item[$(14)$] $y \in \tr_R(I)$.
\item[$(15)$] There is an ideal $L$ with $I \cong L$ such that $y \in L$.
\item[$(16)$] There is an isomorphism $Z_1(I) \cong \bigoplus_{i=2}^n \m(-|x_i|)$ of graded $R$-modules.

\end{enumerate}

\end{theorem}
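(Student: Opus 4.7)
I plan to organize the many equivalences around a single cycle $(1) \Rightarrow (2) \Rightarrow (14) \Rightarrow (11) \Rightarrow (4) \Rightarrow (5) \Rightarrow (6) \Rightarrow (7) \Rightarrow (8) \Rightarrow (1)$ and fold in the remaining conditions via short side arguments. Most links in this cycle follow immediately from Section \ref{prelim}: $(1) \Rightarrow (2)$ combines $\tr_R(I) = \sum_i (x_i:I)$ from Proposition \ref{traceprop}(3) with $\tr_R(I) \subseteq \m$ (which holds since $I$ is nonprincipal, cf.\ Proposition \ref{basictraceprop}(2)); $(2) \Rightarrow (14)$ is $y \in \m$; $(14) \Rightarrow (11)$ uses the trace-annihilator bound $\tr_R(I) \cdot \Ext^1_R(I, I^\vee) = 0$ that is invoked in the proof of Theorem \ref{keypoint}; $(11) \Rightarrow (4)$ is Corollary \ref{idealcase}; $(4) \Rightarrow (5)$ is Lemma \ref{torsiondual}; $(5) \Rightarrow (6)$ is immediate from the embedding $B_1(I)/IZ_1(I) \hookrightarrow \Tor^R_1(I, R/I)$ of Proposition \ref{thetorthing}; $(6) \Rightarrow (7)$ is trivial; and $(7) \Rightarrow (8)$ is exactly Theorem \ref{keypoint}. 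The closing step $(8) \Rightarrow (1)$ is where the Ulrich theory of Section \ref{ulrichsection} plays its role: the ideal $(x_1:I)$ is $\m$-primary (since $y \in (x_1:I)$), is Ulrich as an $R$-module by Lemma \ref{dualcolontraceulrich}(3), and has integral closure equal to $\m$ (because $y$ is a reduction of $\m$), so Proposition \ref{idealulrichred} forces $(x_1:I) = \m$.

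To incorporate the remaining conditions I would treat them in batches tied to nearby entries of the cycle. Once the cycle above is closed, $\tr_R(I) = \m$, and the trace-annihilator principle then gives $\m \Ext^i_R(I, I^\vee) = 0$ for every $i > 0$; this upgrades $(4)$ to $(3)$ and $(11)$ to $(10)$. The $y$-versus-$\m$ equivalences $(5) \Leftrightarrow (12)$ and $(9) \Leftrightarrow (13)$ are instances of the reduction principle of Proposition \ref{reductionext}: for the first I would transfer annihilators through the Matlis duality $\Tor^R_1(I,R/I) \cong \Hom_R(\Ext^1_R(I,I^\vee), E_R(k))$ of Lemma \ref{torsiondual}; for the second I would push annihilators across the surjection $\bigwedge^2_R(I) \twoheadrightarrow B_1(I)/IZ_1(I)$ that drops out of the diagram in Proposition \ref{thetorthing}. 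For $(14) \Leftrightarrow (15)$, the direction $(15) \Rightarrow (14)$ is immediate from the definition of the trace; for the reverse, in the graded setting $\tr_R(I) = \sum_k g_k I$ with each $g_k \in \Hom_R(I, R)$ homogeneous, and since $R$ has one-dimensional graded components, a homogeneous $y \in \tr_R(I)$ of degree $a_1$ must already lie in a single $g_k I$, giving $L := g_k I \cong I$ containing $y$.

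The direct structural implications still to treat are $(9) \Rightarrow (6)$, $(13) \Rightarrow (7)$, $(1) \Rightarrow (9)$, and $(1) \Leftrightarrow (16)$. The first two follow from the fact that Proposition \ref{thetorthing} provides a surjection $\bigwedge^2_R(I) \twoheadrightarrow B_1(I)/IZ_1(I)$, so any annihilator of $\bigwedge^2_R(I)$ annihilates $B_1(I)/IZ_1(I)$. For $(1) \Rightarrow (9)$, I would compute directly in $\bigwedge^2_R(I)$: under $(1)$, for each $b \in \m$ and $j \ne 1$ the element $p_{1j}(b) = b e_j - (bx_j/x_1) e_1$ lies in $Z_1(I)$, and wedging the relation $\sum_i z_i x_i = 0$ with $x_k$ produces $b(x_1 \wedge x_j) = 0$ at $k = 1$ and $b(x_j \wedge x_k) = (bx_j/x_1)(x_1 \wedge x_k)$ at $k \notin \{1, j\}$; a short degree check shows $bx_j/x_1 \in \m$, so the second family recycles to the first and yields $\m \bigwedge^2_R(I) = 0$. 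For $(1) \Rightarrow (16)$, the map $q^1: \bigoplus_{j=2}^n \m(-|x_j|) \to Z_1(I)$ assembled from the $p_{1j}$'s is injective by Proposition \ref{colontocycle}(1), and I would prove its surjectivity by combining Proposition \ref{matrixtwocolsup} with the identity $p_{ji}(a) = p_{1i}(a) - p_{1j}(ax_i/x_j)$, valid under $(1)$, to reduce every component map to $\im(q^1)$. Conversely, for $(16) \Rightarrow (8)$ I would inspect the graded strand $(Z_1(I))_{a_1 + |x_j|}$: the summand $\m(-|x_j|)$ places $y$ there, and Proposition \ref{colontocycle} forces such a cycle to arise from $p_{1j}(y)$, giving $y \in (x_1:x_j)$ for every $j \ne 1$, hence $y \in (x_1:I)$. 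The main obstacle I anticipate is the degree bookkeeping in the $(1) \Rightarrow (16)$ surjectivity step and in the $(1) \Rightarrow (9)$ calculation, since both rest on the subtle point that the auxiliary fractional elements $ax_i/x_j$ lie not merely in $R$ but in $\m$, which in turn depends crucially on the combination of minimal multiplicity and graded homogeneity of the numerical semigroup ring.
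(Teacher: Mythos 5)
Your proposal is substantively correct but takes a genuinely different route from the paper's. The paper's main cycle is $(1)\Rightarrow(2)\Rightarrow(3)\Rightarrow(4)\Rightarrow(5)\Rightarrow(6)\Rightarrow(7)\Rightarrow(8)\Rightarrow(1)$, with the $y$-versions $(10)$--$(13)$ attached afterwards by going $(3)\Rightarrow(10)\Rightarrow(11)\Rightarrow(12)\Rightarrow(7)$ and $(13)\Rightarrow(7)$; the paper never invokes Proposition \ref{reductionext} inside this argument. Your cycle threads $(14)$ and $(11)$ into the main loop and uses Proposition \ref{reductionext} to jump from $(11)$ to $(4)$. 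This is a legitimate shortcut: it lets you skip the paper's Ulrich-module argument for $(14)\Rightarrow(2)$ entirely (which combines Lemma \ref{dualcolontraceulrich}(5) with Proposition \ref{idealulrichred}), since your cycle only needs $(2)\Rightarrow(14)$, which is trivial. Two other places where you diverge from the paper are also sound alternatives. For $(14)\Rightarrow(15)$ you use the one-dimensionality of the graded strands of $R$ to split $y$ off into a single $g_kI$; the paper instead deduces $(15)$ directly from $(8)$ by setting $L := yI/x_1$, which is more explicit but logically equivalent once the cycle is closed. For the surjectivity of $q^1$ in $(1)\Rightarrow(16)$, you use the identity $p_{ji}(a)=p_{1i}(a)-p_{1j}(ax_i/x_j)$ to reduce Proposition \ref{matrixtwocolsup}(2) to $\im(q^1)$, whereas the paper instead shows $q^1\otimes k$ is injective and invokes the rank equality of two Ulrich modules; both hinge on the degree observation that $ax_i/x_j\in\m$, which you correctly flag as the crux.

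Two small corrections are needed. First, for $(11)\Rightarrow(4)$ you cite Corollary \ref{idealcase}, which is stated for $\Ext^1_R(J,R)$ and is not directly about $\Ext^1_R(I,I^\vee)$; the correct reference is Proposition \ref{reductionext}, applied with $M=I$, $N=I^\vee$ (both maximal Cohen-Macaulay since $R$ is a one-dimensional CM domain and $I$ is nonzero), taking $L=\Ann_R(\Ext^1_R(I,I^\vee))$ and noting that $y\in L$ with $y$ a reduction of $\m$ forces $\bar L=\m$. Second, your claim that $(9)\Leftrightarrow(13)$ is ``an instance of the reduction principle of Proposition \ref{reductionext}'' and follows by ``pushing annihilators across the surjection $\bigwedge^2_R(I)\twoheadrightarrow B_1(I)/IZ_1(I)$'' is not quite right: the surjection only gives $\Ann(\bigwedge^2_R(I))\subseteq\Ann(B_1(I)/IZ_1(I))$, i.e.\ the implications $(9)\Rightarrow(6)$ and $(13)\Rightarrow(7)$, and Proposition \ref{reductionext} is about $\Ext$, not $\bigwedge^2$. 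In fact the equivalence of $(9)$ and $(13)$ with the rest is only established transitively: $(13)\Rightarrow(7)\Rightarrow\cdots\Rightarrow(1)\Rightarrow(9)\Rightarrow(13)$. You do have all the needed implications, so the logic closes, but the stated mechanism is misleading and should be replaced by the transitive route through the cycle — precisely as the paper presents it with ``$(12)\Rightarrow(7)$ from Proposition \ref{thetorthing}, and clearly $(9)\Rightarrow(13)$. As we have already established $(7)\Rightarrow(9)$, $(12)\Rightarrow(13)$ follows.''
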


\begin{proof}

We note $(1) \Rightarrow (2)$ follows from Proposition \ref{traceprop} (3) and $(2) \Rightarrow (3)$ follows from combining \cite[Lemma 2.2]{DK21} and Proposition \ref{traceprop} (4).

The implications $(3) \Rightarrow (4)$ and $(6) \Rightarrow (7)$ are clear, $(4) \Rightarrow (5)$ and $(5) \Rightarrow (6)$ follow from Proposition \ref{torsiondual} and Proposition \ref{thetorthing} respectively, and $(7) \Rightarrow (8)$ is the content of Theorems \ref{setup} and \ref{keypoint}. We now show $(8) \Rightarrow (1)$. Indeed, it follows from Lemma \ref{dualcolontraceulrich} (3) that $(x_1:I)$ is Ulrich as an $R$-module, but since $(x_1:I)$ contains the reduction $y$, Proposition \ref{idealulrichred} (1) implies that $(x_1:I)=\m$. The equivalence of items $(1)-(8)$ is now established. We note that $(9) \Rightarrow (6)$ from Proposition \ref{thetorthing}, and we now show $(1) \Rightarrow (9)$. Indeed, if $(x_1:I)=\m$, then for any $a \in \m$, we have $\dfrac{ax_j}{x_1} \in R$ for all $j$, but in fact, as the $x_j$ are minimal generators of $I$, we must have $\dfrac{ax_j}{x_1} \in \m$, or else the equation $\dfrac{ax_j}{x_1}x_1=ax_j$ would force $x_1 \in (x_j)$. Then $\dfrac{ax_j}{x_1^2}x_i=\dfrac{ax_ix_j}{x_1^2} \in R$ as well, so for any $i<j$, we have $a(x_i \wedge x_j)=\dfrac{ax_ix_j}{x_1^2}(x_1 \wedge x_1)=0$. Therefore, $\m \bigwedge^2_R(I)=0$, and the equivalence of $(1)-(9)$ is established.

Now we note $(3)$ clearly implies $(10)$, and similar to above, $(10) \Rightarrow (11)$ is clear, while $(11) \Rightarrow (12)$ follows from Lemma \ref{torsiondual}. To see $(12) \Rightarrow (13)$, we note $(12) \Rightarrow (7)$ from Proposition \ref{thetorthing}, and clearly $(9) \Rightarrow (13)$. As we have already established $(7) \Rightarrow (9)$, $(12) \Rightarrow (13)$ follows. But $(13)$ also implies $(7)$ by Proposition \ref{thetorthing}, while $(2) \Rightarrow (14)$ is immediate. That $(14) \Rightarrow (2)$ follows combining Lemma \ref{dualcolontraceulrich} $(5)$ and Proposition \ref{idealulrichred}, so the equivalence of $(1)-(14)$ is established.

We next show $(8) \Rightarrow (15)$. Indeed, if $y \in (x_1:I)$, then we set $L:=yI/x_1 \subseteq R$. Then $L \cong I$, and as $x_1 \in I$, we have $yx_1/x_1=y \in L$. But now $(15) \Rightarrow (14)$ holds since the trace ideal is invariant on the isomorphism class of a module, and we thus have $L \subseteq \tr_R(L)=\tr_R(I)$, so $y \in \tr_R(I)$.

Finally, we prove $(1) \Rightarrow (16)$ and $(16) \Rightarrow (8)$. First suppose $(x_1:I)=\m$. Then $(x_1:x_j)=\m$ for all $j=2,\dots,n$. We claim the graded injection $q^i:\bigoplus_{j=2}^n (x_1:x_j)(-|x_j|) \to Z_1(I)$ of Proposition \ref{matrixtwocolsup} is surjective. Note from Lemma \ref{dualcolontraceulrich} (1) that $Z_1(I)$ is Ulrich, and as $(x_1:x_j)=(x_1:(x_1,x_j))$, Lemma \ref{dualcolontraceulrich} (3) forces each $(x_1:x_j)$ to be Ulrich. In particular, $\bigoplus_{j=2}^n (x_1:x_j)(-|x_j|)$ and $Z_1(I)$ are Ulrich modules of the same rank $n-1$, and so have the same minimal number of generators. It thus suffices to show that $q^i \otimes k$ is injective, as then $\coker(q^i) \otimes_R k=0$. If not, there is a minimal homogeneous generator $v=(v_2,\dots,v_n)$ of $\bigoplus_{j=2}^n (x_1:x_j)(-|x_j|)$ for which $q^i(v)=-\sum_{i=2}^n \dfrac{v_ix_i}{x_1}e_1+\sum_{i=2}^nv_ie_i \in \m Z_1(I)=yZ_1(I)$. So $q^i(v)=yz$ for some $z=\sum_{i=1}^n z_ie_i \in Z_1(I)$. Being a homogeneous generator, we have some $v_i=ct^{a_{\ell}}$ for some $\ell$ and some nonzero $c \in k$. But then $ct^{a_{\ell}}=yz_i$, and as $z_i$ must be in $\m$, this would force $t^{a_{\ell}} \in \m^2$, which is not the case. It follows that $q^i \otimes k$  is injective, from which we have that $q^i$ is surjective, establishing that $(1) \Rightarrow (16)$.

Finally, we show $(16) \Rightarrow (8)$. Now suppose there is a graded isomorphism $\theta:\bigoplus_{i=2}^n \m(-|x_i|) \to Z_1(I)$. Then for all $i=2,\dots,n$, $Z_1(I)$ contains a homogeneous element $z_i:=\sum_{j=1}^n r^i_je_j$ of degree $|y|+|x_i|$. In particular, if $j>i$, we must have $r^i_j=0$, since each $r^i_j \in \m$, so $|r^i_j|+|x_j| \ge |y|+|x_j|>|y|+|x_i|$ when $r^i_j$ is nonzero. Then for each $i$, there is an $\ell_i \le i$, with $|r^i_{\ell_i}|+|x_{\ell_i}|=|y|+|x_i|$. When $\ell_i=i$, we have that $r^i_i=cy$ for some $c \in k$, and as $z_i \in Z_1(I)$, it follows from Lemma \ref{decompcolonideal} that $y \in (x_{j_i}:x_i)$ for some $j_i<i$. On the other hand, when $\ell_i<i$, we have already that $y \in (x_{\ell_i}:x_i)$. So in either case we have that for all $i=2,\dots,n$, there is a $j_i<i$ for which $y \in (x_{j_i}:x_i)$. Following the inductive process outlined in the proof of Theorem \ref{setup}, this forces $y \in (x_1:I)$, so $(16) \Rightarrow (8)$, and the proof is complete.

\end{proof}

Theorem \ref{generalideals} has the following fundamental consequence that to the best of our knowledge is new:

\begin{cor}\label{thisiscool}
Suppose $R$ is a numerical semigroup ring that is not a DVR. Then $R$ has minimal multiplicity if and only if $Z_1(\m) \cong \m^{\oplus e(R)-1}$. 
\end{cor}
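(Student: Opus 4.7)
The plan is to derive this corollary directly from Theorem \ref{generalideals} applied to the ideal $I = \m$. Since $R$ is a one-dimensional Cohen-Macaulay domain that is not a DVR, $\m$ is a nonprincipal homogeneous ideal of $R$, and we may take the minimal homogeneous generating set to be $x_i = t^{a_i}$ with $a_1 < a_2 < \cdots < a_n$, so $x_1 = t^{a_1} = y$.

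For the forward direction, I would verify condition (1) of Theorem \ref{generalideals}, namely $(y:\m) = \m$. Assuming minimal multiplicity, $y$ is a minimal (homogeneous) reduction of $\m$ satisfying $y\m = \m^2$, hence $\m\cdot\m \subseteq (y)$ gives $\m \subseteq (y:\m)$. For the reverse inclusion, if $(y:\m)$ contained a unit then $\m \subseteq (y)$, forcing $\m = (y)$ (since $y \in \m$), and therefore $R$ would be a DVR, contrary to hypothesis. Hence $(y:\m) = \m$. Theorem \ref{generalideals} then yields condition (16): a graded isomorphism $Z_1(\m) \cong \bigoplus_{i=2}^n \m(-|x_i|)$. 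Forgetting the grading and using that minimal multiplicity is equivalent to the equality $\mu_R(\m) = e(R)$ (Abhyankar's bound in dimension one), we get $n = e(R)$, so $Z_1(\m) \cong \m^{\oplus e(R)-1}$ as $R$-modules.

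For the reverse direction, I would argue by a rank count. The defining exact sequence
\[
0 \to Z_1(\m) \to R^{\mu_R(\m)} \to \m \to 0
\]
shows that $\rank_R Z_1(\m) = \mu_R(\m) - 1$, since $R$ is a domain and $\m$ has rank $1$. On the other hand, $\rank_R \m^{\oplus e(R)-1} = e(R) - 1$. Assuming $Z_1(\m) \cong \m^{\oplus e(R)-1}$, equating ranks gives $\mu_R(\m) = e(R)$, which is exactly minimal multiplicity.

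The substantive content lies entirely on the forward direction and is essentially done by invoking Theorem \ref{generalideals}; the only modest check is the identity $(y:\m) = \m$, which follows immediately from the relation $y\m = \m^2$ characterizing minimal multiplicity together with the assumption that $R$ is not a DVR. The reverse direction is purely a rank comparison, so there is no genuine obstacle to overcome.
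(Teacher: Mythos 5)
Your proof is correct and takes essentially the same route as the paper: both directions rest on Theorem \ref{generalideals} (for the forward implication) and a rank comparison on the exact sequence $0\to Z_1(\m)\to R^{\mu_R(\m)}\to\m\to 0$ (for the converse). The only cosmetic difference is that the paper enters Theorem \ref{generalideals} through condition $(2)$, observing that $\m\subseteq\tr_R(\m)$ always holds and $\tr_R(\m)=R$ only when $\m$ is principal, whereas you verify condition $(1)$ directly from $y\m=\m^2$; these are equivalent by the theorem and the choice is immaterial.
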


\begin{proof}
First suppose $R$ has minimal multiplicity so that $e(R)=\mu_R(\m)$. As $R$ is not a DVR, $\tr_R(\m)=\m$, so Theorem \ref{generalideals} implies the existence of the desired isomorphism, and in fact, the isomorphism can be made graded with suitable shifts in the direct sum decomposition.

Now suppose instead that $Z_1(\m) \cong \m^{\oplus e(R)-1}$. By rank considerations we have $\rank(Z_1(\m))=\mu_R(\m)-1=e(R)-1$, so $R$ has minimal multiplicity.
\end{proof}

Applying Theorem 3.8 when $I$ is a canonical ideal for $R$, we immediately obtain the following, which gives an affirmative answer to Question \ref{daoetal} for numerical semigroup rings of minimal multiplicity:

\begin{theorem}\label{nearlygormainthm}

Suppose $R$ is a numerical semigroup ring of minimal multiplicity with canonical ideal $I$. Let $x_1,\dots,x_r$ be a minimal homogeneous generating set for $I$ with $|x_1|<|x_2|<\cdots <|x_r|$ and set $y=t^{a_1}$. Then the following are equivalent:

\begin{enumerate}

\item[$(1)$] $R$ is almost Gorenstein, i.e., $\m\subseteq (x_1:I)$.

\item[$(2)$] $R$ is nearly Gorenstein.

\item[$(3)$] $\m \Ext^i_R(I,R)=0$ for all $i>0$.

\item[$(4)$] $\m \Ext^1_R(I,R)=0$.

\item[$(5)$] $\m \Tor^R_1(I,R/I)=0$.

\item[$(6)$] $\m \bigwedge^2_R(I)=0$.

\item[$(7)$] $\m B_1(I) \subseteq IZ_1(I)$.

\item[$(8)$] $y B_1(I) \subseteq IZ_1(I)$.

\item[$(9)$] $y \in (x_1:I)$.

\item[$(10)$] $y\Ext^i_R(I,R)=0$ for all $i>0$.

\item[$(11)$] $y\Ext^1_R(I,R)=0$.

\item[$(12)$] $y \Tor^R_1(I,R/I)=0$.

\item[$(13)$] $y \bigwedge^2_R(I)=0$.

\item[$(14)$] $y \in \tr_R(I)$.

\item[$(15)$] There is an ideal $L$ with $I \cong L$ such that $y \in L$.

\item[$(16)$] There is an isomorphism $Z_1(I) \cong \bigoplus_{i=2}^r \m(-|x_i|)$ of graded $R$-modules. 

\end{enumerate}
    
\end{theorem}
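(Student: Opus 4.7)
The plan is to deduce this theorem directly from Theorem~\ref{generalideals} by specializing $I$ to be the canonical ideal. First, I would dispose of the Gorenstein case: if $R$ is Gorenstein, then $R$ admits a principal canonical ideal $I=(x_1)$, and each of the sixteen conditions either holds vacuously (for instance $B_1(I)=0$, $\bigwedge^2_R(I)=0$, and the direct sum in (16) is empty since $r=1$) or holds trivially (for instance $(x_1:I)=R$ and $\tr_R(I)=R$). Hence we may assume that $R$ is not Gorenstein, in which case the canonical ideal $I\cong\omega_R$ is non-principal and $\m$-primary, so Theorem~\ref{generalideals} applies to $I$.

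The essential observation is that $I^{\vee}=\Hom_R(I,\omega_R)\cong\Hom_R(\omega_R,\omega_R)\cong R$, so under this identification the four $\Ext$ conditions (3), (4), (10), (11) of the present theorem become the corresponding $\Ext^i_R(I,I^{\vee})$ conditions of Theorem~\ref{generalideals}. All the remaining conditions involving Tor, Koszul (co)cycles, wedge products, colon ideals, trace ideals, or the explicit isomorphism of $Z_1(I)$ transfer verbatim from Theorem~\ref{generalideals} to the present statement.

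It then remains only to reconcile the two new conditions (1) and (2). For (2), being nearly Gorenstein means $\m\subseteq\tr_R(\omega_R)=\tr_R(I)$. Because $R$ is not Gorenstein, $\tr_R(\omega_R)$ is a proper ideal defining the non-Gorenstein locus of $R$, hence $\tr_R(\omega_R)\subseteq\m$; thus (2) is equivalent to $\tr_R(I)=\m$, which is condition (2) of Theorem~\ref{generalideals}. For (1), note that $(x_1:I)=R$ would force $I\subseteq(x_1)$, contradicting non-principality of $I$, so automatically $(x_1:I)\subseteq\m$; consequently $\m\subseteq(x_1:I)$ is equivalent to $(x_1:I)=\m$, which is condition (1) of Theorem~\ref{generalideals}.

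The main obstacle here is conceptual rather than technical: once one recognizes that $I^{\vee}\cong R$ for a canonical ideal $I$ and translates the definitions of nearly and almost Gorenstein into the trace and colon language of Theorem~\ref{generalideals}, no genuinely new argument is required. All the deep work—the control of Koszul cycles via the Ulrich property, the degree-based argument forcing $y\in(x_1:I)$, and the explicit construction of the isomorphism in (16)—is already contained in Theorem~\ref{generalideals}.
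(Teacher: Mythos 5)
Your proposal is correct and follows the paper's approach exactly: the paper dispatches this theorem in a single sentence, stating that it is "immediate" from Theorem~\ref{generalideals} applied to the canonical ideal, and your write-up supplies precisely the bookkeeping that makes this dispatch rigorous. In particular, you correctly identify the two small points the paper leaves implicit — that $I^{\vee}\cong R$ reconciles the $\Ext$ conditions, and that the Gorenstein (principal canonical ideal) case must be handled separately since Theorem~\ref{generalideals} requires $I$ nonprincipal — and you handle the translation of "nearly Gorenstein" and "almost Gorenstein" into conditions~(1) and~(2) of Theorem~\ref{generalideals} via the observations that $\tr_R(\w)\subseteq\m$ and $(x_1:I)\subseteq\m$ when $R$ is not Gorenstein.
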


Theorem \ref{nearlygormainthm} indicates that, for a numerical semigroup ring $R$ of minimal multiplicity, the nearly Gorenstein property is intimately connected to the annihilator of $\bigwedge^2_R(I)$ where $I$ is the canonical ideal of $R$, and in particular, it is sufficient that $t^{a_1} \bigwedge^2_R(I)=0$. In view of Proposition \ref{thetorthing}, it is then natural to ask whether $t^{a_1} \delta_1(I)=0$, or even $\m \delta_1(I)=0$, is sufficient to guarantee $R$ is nearly Gorenstein when $R$ has minimal multiplicity. The following example shows this is not the case:  

\begin{example}\label{mkillssyztype3}
Let $R=\mathbb{F}_2[\![t^4,t^9,t^{14},t^{15}]\!]$ with $\m=(t^4,t^9,t^{14},t^{15})$. Then $R$ has the canonical ideal $I:=(t^8,t^9,t^{14})$ with $\m \delta_1(I)=0$, but $\tr_R(I)=(t^8,t^9,t^{14},t^{15}) \ne \m$. So $R$ is not nearly Gorenstein.

\end{example}

\begin{proof}

As the ring $R$ has minimal multiplicity, the psuedo-Frobenius numbers of its corresponding numerical semigroup are $5,10,11$, and it follows that a fractional ideal representation for the canonical module of $R$ is $(1,t,t^6)$ by e.g. \cite[Exercise 2.14]{rosales2009numerical}. But as $t^8(1,t,t^6)=(t^8,t^9,t^{14}) \subseteq R$, it follows that $I:=(t^8,t^9,t^{14}$ is a canonical ideal for $R$. We observe that $t^{15} \cdot t^9=(t^4)^6$ and $t^{15} \cdot t^{14}=t^9 \cdot (t^4)^5$, so $t^{15} \in (t^8:I)$. By Proposotion \ref{traceprop} (3), it follows that $t^{15} \in \tr_R(I)$. On the other hand, $t^4 \cdot t^9=t^{13}=t^5 \cdot t^8$, which as $t^5 \notin R$, shows that $t^4 \notin (t^8:I)$. It follows from Theorem \ref{nearlygormainthm} that $t^4 \notin \tr_R(I)$. So $(t^8,t^9,t^{14},t^{15} \subseteq \tr_R(I) \subsetneq \m$, but as $R/(t^8,t^9,t^{14},t^{15})$ is a principal ideal ring of length $2$, it can only be that $\tr_R(I)=(t^8,t^9,t^{14},t^{15})$. 

Let $x_1:=t^8$, $x_2:=t^9$, and $x_3:={14}$ and take $F \twoheadrightarrow I$ be the graded free module surjecting onto $I$ by $e_1 \mapsto x_1$, $e_2 \mapsto x_2$, and $e_3 \mapsto x_3$. Let $z=z_1e_1+z_2e_2+z_3e_3$ be a homogeneous generator of $Z_1(I) \cap IF$, so that in particular each $z_i=t^{c_i} \in I$ for some $c_i$. Then as $z_1x_1+x_2x_2+z_3x_3=0$, it can only be that one of the $z_i$ is $0$, so as the $z_i \in I$, and as each $z_i$ is a homogeneous element of $I$, the relation $z_1x_1+z_2e_2+z_3e_3$, and in turn $z$, is determined by a relation of the form $ax_{u}x_{v}=bx_{p}x_{q}$. If either of $u$ or $v$ is equal to $p$ or $q$, then we have $z \in B_1(I)$, since either there is a common factor in the nonzero entries of $z$ which forces $z \in IZ_1(I) \subseteq B_1(I)$, or $z$ is directly a multiple of one of the homogeneous generators of $B_1(I)$. Otherwise, we must have, without loss of generality, that $u=1$, $v=3$, and $p=q=2$. But we note that $x_1x_3=t^4x_2^2$, so if $ax_1x_3=bx_2^2$, then $b=t^4a$. It follows that $z$ is a multiple of either $x_3e_1-t^4x_2e_2$ or of $x_1e_3-t^4x_2e_2$. Combining the above, it follows that $Z_1(I) \cap IF=B_1(I)+\langle x_3e_1-t^4x_2e_2,x_1e_3-t^4x_2e_2 \rangle$. But we observe that $t^4(x_3e_1-t^4x_2e_2)=t^4x_3e_1-t^8x_2e_2=x_2^2e_1-x_1x_2e_2=x_2(x_2e_1-x_1e_2) \in B_1(I)$, and $t^4(x_1e_3-t^4x_2e_2)=t^4x_1e_3-t^8x_2e_2=t^4x_1e_3-x_1x_2e_2=x_1(t^4e_3-x_2e_2) \in IZ_1(I) \subseteq B_1(I)$. It follows that $t^4\delta_1(I)=0$, but $\tr_R(I)\delta_1(I)=0$ combining \cite[Corollary 2.4]{DK21} and Proposition \ref{thetorthing}. As $(t^4)+\tr_R(I)=\m$, the claim follows.
\end{proof}


\begin{remark}
The choice of base field $\mathbb{F}_2$ in Example \ref{mkillssyztype3} is merely for simplicity. A similar argument as that of Example \ref{mkillssyztype3} can applied over any base field $k$, but comes at the expense of requiring a good deal more technical analysis to examine generators of $Z_1(I) \cap IF$. 
\end{remark}

\section{A Counterexample}\label{counterex}

In this section we provide an extended counterexample to show that Question \ref{daoetal} has a negative answer in general, even for a numerical semigroup ring with some nice properties. 


\setcounter{MaxMatrixCols}{55}


\begin{lem}\label{calcpres}

Let $R:=k[\![t^5,t^6,t^{13},t^{14}]\!]$. Then 
\[R \cong k[\![x,y,z,w]\!]/(y^3-xz,yz-xw,x^4-yw,z^2-y^2w,x^3y^2-zw,x^3z-w^2).\]

\end{lem}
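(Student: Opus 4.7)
The plan is to realize both rings as free modules of rank $5$ over $k[\![x]\!]$ (via $x \mapsto t^5$) and compare. Let $S := k[\![x,y,z,w]\!]$, let $I \subset S$ denote the ideal in the statement, and let $\phi: S \to R$ be the continuous $k$-algebra map sending $(x,y,z,w) \mapsto (t^5, t^6, t^{13}, t^{14})$. The first step is routine verification: each of the six binomial generators of $I$ lies in $\ker\phi$ because both monomials in each have the same $t$-degree (e.g.\ both sides of $y^3 = xz$ evaluate to $t^{18}$, both sides of $x^3 z = w^2$ evaluate to $t^{28}$, etc.). Since $R$ is topologically generated by $t^5, t^6, t^{13}, t^{14}$, the map $\phi$ is surjective and descends to a surjection $\bar\phi: A \twoheadrightarrow R$ where $A := S/I$.

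Next, I would analyze the $k[\![x]\!]$-module structure of $R$ via the Apéry set of the semigroup $\langle 5,6,13,14 \rangle$ with respect to $5$. The semigroup misses $\{1,2,3,4,7,8,9\}$, and a direct check shows the five Apéry elements are $\{0, 6, 12, 13, 14\}$, so $R$ is a free $k[\![x]\!]$-module of rank $5$ with basis $\{1, y, y^2, z, w\}$ (writing $y, z, w$ for $t^6, t^{13}, t^{14}$). In parallel I compute $A/xA$: setting $x=0$ turns the six relations into $y^3 = 0$, $yz = 0$, $yw = 0$, $z^2 - y^2w = 0$, $zw = 0$, $w^2 = 0$, and then $y^2 w = y(yw) = 0$ forces $z^2 = 0$, so
\[ A/xA \;\cong\; k[\![y,z,w]\!]/(y^3,\, yz,\, yw,\, z^2,\, zw,\, w^2). \]
This local quotient is a $5$-dimensional $k$-vector space with basis $\{1, y, y^2, z, w\}$: every monomial of $(y,z,w)$-degree at least $2$ either vanishes or equals $y^2$, and $(y,z,w)^3 = 0$.

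To conclude I combine completeness with Nakayama. Since $A$ is a complete Noetherian local ring with Artinian quotient $A/xA$, it has dimension $1$ (we know $\dim A \ge \dim R = 1$, and $A/xA$ being Artinian forces $\dim A \le 1$). The inclusion $k[\![x]\!] \hookrightarrow A$ (injective because $\phi$ is already injective on the subring $k[\![x]\!]$, as $R$ is a domain containing $t^5$ as a nonzerodivisor) together with $x$ being a system of parameters shows, by the standard Cohen-structure argument, that $A$ is a finitely generated $k[\![x]\!]$-module. Lifting the $k$-basis $\{1,y,y^2,z,w\}$ of $A/xA$ gives a $k[\![x]\!]$-generating set for $A$ by Nakayama. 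But $\bar\phi$ surjects $A$ onto $R$, which has $k[\![x]\!]$-rank $5$, so $A$ must also have rank exactly $5$ and the five generators must form a $k[\![x]\!]$-basis. Thus $\bar\phi$ is a surjection of free $k[\![x]\!]$-modules of equal rank that takes a basis onto a basis, hence is an isomorphism.

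The main obstacle, and the part of the argument that genuinely depends on the precise choice of the six relations (as opposed to a larger or smaller list), is verifying that $A/xA$ is exactly $5$-dimensional with the claimed basis. This computation pinpoints the balance: the six given binomials are exactly enough to cut $S$ down to the correct rank over $k[\![x]\!]$, while being valid elements of $\ker\phi$. The remaining verifications (the individual relations, the Apéry set, the Nakayama lifting) are mechanical.
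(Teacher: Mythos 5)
Your proposal is correct and follows essentially the same strategy as the paper: verify the generators lie in the kernel to get a surjection $A = S/I \twoheadrightarrow R$, reduce modulo $x$ to the Artinian quotient $k[\![y,z,w]\!]/(y^3,yz,yw,z^2,zw,w^2)$, observe it has length $5$, compare with $l(R/t^5R)=5$, and conclude by Nakayama. The paper phrases the endgame slightly differently — it tensors the short exact sequence $0 \to K \to A \to R \to 0$ with $A/xA$ (exactness preserved since $x$ is regular on $R$), uses $l(R/t^5R)=e(R)=5$ directly rather than the explicit Apéry basis, and kills $K$ by length-count plus Nakayama — but the content is the same, and your extra scaffolding (Apéry set, free $k[\![x]\!]$-module structure, module-finiteness) is all correct.
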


\begin{proof}

Let $S:=k[\![x,y,z,w]\!]$, set $J=(y^3-xz,yz-xw,x^4-yw,z^2-y^2w,x^3y^2-zw,x^3z-w^2)$ and define the surjection $p:S \to R$ by $x \mapsto t^5$, $y \mapsto t^6$, $z \mapsto t^{13}$, and $w \mapsto t^{14}$. We observe directly that each minimal generator of $J$ is contained in $\ker(p)$, so setting $T:=S/J$, there is an induced short exact sequence of $T$-modules
\[0 \rightarrow K \rightarrow T \xrightarrow{\bar{p}} R \rightarrow 0.\]
Applying $- \otimes_T T/xT$, noting that $x$ is regular on $R$, and noting that the $T$-module structure of $R$ forces $xR=t^5R$, we have a short exact sequence 
\[0 \rightarrow K \otimes_T T/xT \rightarrow T/xT \xrightarrow{\bar{p}} R/t^5R \rightarrow 0.\]
As $t^5$ is a minimal reduction of the maximal ideal of $R$, we have $l_R(R/t^5R)=e(R)=5$. We have $T/xT \cong k[\![y,z,w]\!]/(y^3,yz,yw,z^2,zw,w^2)$. Let $\mathfrak{n}$ be the maximal ideal of $T/xT$. Then we observe that $\mathfrak{n}^3=0$ and that $\mathfrak{n}^2=(y^2)$. Then the Hilbert series of $\gr_{\mathfrak{n}}(T/xT)$ is $1+3t+t^2$, and it follows that $l_T(T/xT)=5$. But this forces $l_T(K \otimes_R T/xT)=0$, from which it follows that $K=0$, completing the proof.
\end{proof}

\begin{prop}\label{examplesyz}

Let $R:=k[\![x,y,z,w]\!]/(y^3-xz,yz-xw,x^4-yw,z^2-y^2w,x^3y^2-zw,x^3z-w^2) \cong k[\![t^5,t^6,t^{13},t^{14}]\!]$ with $\m=(x,y,z,w)$, let $L=(x,y^2,z,w)$, and consider the matrix \[S:=\begin{pmatrix} y & 0 & z & w & 0 & x^3 & 0 & 0 \\ -x & y & 0 & 0 & z & 0 & w & x^3 \\ 0 & -x & -xy & -y^2 & -y^2 & -z & -z & -w \end{pmatrix}\] and the matrix $T$
\ \\

{\arraycolsep=1.4pt $\left(\!\begin{array}{cccccccccccccccccccccccc}
\vphantom{\left\{16\right\}}y^{2}&0&z&0&0&0&w&0&0&0&x^{3}&0&0&0&0&0&0&0&0&0&0&0&0&0\\
\vphantom{\left\{17\right\}}x\,y&-y^{2}&y^{2}&0&-z&-z&z&0&-w&-w&w&x^{3}&0&-y\,w&0&0&0&0&0&0&x^{2}y^{2}&0&0&x^{2}z\\
\vphantom{\left\{23\right\}}-x&0&0&-y&0&0&0&0&0&0&0&0&-z&0&-w&-w&0&0&x^{3}&0&0&0&x^{2}y&0\\
\vphantom{\left\{24\right\}}0&0&-x&x&0&0&-y&y&0&0&0&0&0&0&z&0&0&0&-w&-w&0&0&0&0\\
\vphantom{\left\{24\right\}}0&x&0&0&0&y&0&0&0&0&0&0&0&z&0&0&-w&-w&0&0&0&0&0&0\\
\vphantom{\left\{25\right\}}0&0&0&0&0&0&0&-x&0&0&-y&0&x\,y&0&0&y^{2}&0&0&0&z&0&0&-w&0\\
\vphantom{\left\{25\right\}}0&0&0&0&x&0&0&0&y&0&0&0&0&0&0&0&z&0&0&0&-w&-w&0&0\\
\vphantom{\left\{26\right\}}0&0&0&0&0&0&0&0&0&x&0&-y&0&0&0&0&0&y^{2}&0&0&0&z&0&-w
\end{array}\!\right)$}
\ \\



 Then we have the following:

\begin{enumerate}

\item[$(1)$] $I=(x^2,xy,y^2)$ is a canonical ideal for $R$.


\item[$(2)$] There is a minimal presentation $R^{\oplus 8} \xrightarrow{S} R^{\oplus 3} \xrightarrow{p} I \rightarrow 0$ where $p$ acts on standard basis vectors by $p(e_1)=x^2$, $p(e_2)=xy$, and $p(e_3)=y^2$.

\item[$(3)$] There is a minimal presentation $R^{\oplus 24} \xrightarrow{T} R^{\oplus 8} \xrightarrow{s} \Omega^1_R(I) \rightarrow 0$ where $s$ acts on standard basis vectors by mapping $e_i$ to the $i$th column of $S$. 

\item[$(4)$] $\Omega^1_R(I) \cong L^{\oplus 2}$. 

\item[$(5)$] $\tr_R(L)=\m$.

\end{enumerate}

\end{prop}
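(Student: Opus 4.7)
The plan is to verify claims (1)--(5) in sequence. For (1), I would enumerate the numerical semigroup $H = \langle 5, 6, 13, 14 \rangle$: its gap set is $\{1,2,3,4,7,8,9\}$, so the Frobenius number is $F = 9$ and the pseudo-Frobenius numbers are $7, 8, 9$. By the standard realization (\cite[Exercise 2.14]{rosales2009numerical}), $\omega_R$ corresponds to the fractional ideal $(t^{F-7}, t^{F-8}, t^{F-9}) = (1, t, t^2)$; multiplying by $t^{10} = x^2$ yields the ordinary ideal $(x^2, xy, y^2) = I$, which is therefore a canonical ideal of $R$. For (2) and (3), I would verify directly that each column of $S$ is a relation among $x^2, xy, y^2$ using the defining equations of Lemma \ref{calcpres} (e.g.\ column $3$ reads $zx^2 - xy \cdot y^2 = x(xz - y^3) = 0$), and that each column of $T$ is a relation among the columns of $S$. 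Minimality is automatic since every entry lies in $\m$; that the listed relations generate all syzygies (not merely a subset) can be confirmed by matching degrees and generator counts via the graded Hilbert series, or by a computer algebra check.

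The main step is (4). Write $\alpha_1 = (y, -x, 0)$ and $\alpha_2 = (0, y, -x)$ for the first two columns of $S$; they form a $\Frac(R)$-basis of $\Omega^1_R(I) \otimes_R \Frac(R)$. I would define
\[
\phi \colon L^{\oplus 2} \to \Omega^1_R(I), \qquad \phi(f, g) := (f \alpha_1 + g \alpha_2)/x = (fy/x,\ (gy - fx)/x,\ -g),
\]
and check that it is an isomorphism. The key input is that $fy/x \in R$ for every minimal generator $f$ of $L$: $x \cdot y/x = y$, $y^2 \cdot y/x = y^3/x = z$, $z \cdot y/x = yz/x = w$, and $w \cdot y/x = yw/x = x^3$, each from a defining relation of $R$. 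Hence $\phi$ takes values in $R^3$, and a short direct calculation gives $p \circ \phi = 0$ where $p \colon R^3 \twoheadrightarrow I$ sends $e_1, e_2, e_3$ to $x^2, xy, y^2$. Injectivity follows from the $\Frac(R)$-independence of $\alpha_1, \alpha_2$. For surjectivity I would exhibit explicit preimages for all eight columns of $S$: $\phi(x, 0) = \alpha_1$, $\phi(0, x) = \alpha_2$, $\phi(y^2, xy) = \alpha_3$, $\phi(z, y^2) = \alpha_4$, $\phi(0, y^2) = \alpha_5$, $\phi(w, z) = \alpha_6$, $\phi(0, z) = \alpha_7$, $\phi(0, w) = \alpha_8$. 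Each identity reduces to a brief direct computation using the defining relations; for instance, $y^2 \alpha_1/x = (z, -y^2, 0)$ and $y\alpha_2 = (0, y^2, -xy)$ sum to $\alpha_3$.

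For (5), Proposition \ref{traceprop}(3) reduces the task to showing $(x : L) = \m$. The inclusion $\m \subseteq (x : L)$ is verified by checking each of $y, z, w$ against the generators of $L$: $y \cdot y^2 = xz$, $y \cdot z = xw$, $y \cdot w = x^4$; $z \cdot y^2 = x^5$, $z^2 = y^2 w = x^4 y \in (x)$, $zw = x^3 y^2$; and likewise for $w$. Conversely, $y^2/x = t^7 \notin R$ since $7 \notin H$, so $1 \notin (x : L)$, giving $(x : L) = \m$. Since $L$ is $\m$-primary and non-principal, $R$ is not a direct summand of $L$, so $\tr_R(L) \neq R$, forcing $\tr_R(L) = \m$. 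The main technical obstacle is the construction of the isomorphism in (4); the remaining items reduce to direct computations.
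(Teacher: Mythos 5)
Your overall strategy is sound and parts (1) and (5) match the paper's argument: (1) enumerates pseudo-Frobenius numbers and translates through $t^{10}$; (5) shows $y \in (x:L)$ via the defining relations $y^3=xz$, $yz=xw$, $yw=x^4$ and invokes Proposition~\ref{traceprop}(3), then uses non-principality of $L$ to rule out $\tr_R(L)=R$.

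Your approach to (4) is genuinely different from, and arguably cleaner than, the paper's. You build the isomorphism in the forward direction, $\phi\colon L^{\oplus 2}\to \Omega^1_R(I)$, $\phi(f,g)=(f\alpha_1+g\alpha_2)/x$, using the fact that $y/x$ carries each generator of $L=(x,y^2,z,w)$ into $R$. Injectivity is immediate from $\Frac(R)$-independence of $\alpha_1,\alpha_2$, and surjectivity is settled by the eight explicit preimages of the columns of $S$. This bypasses the paper's construction of a surjection $\theta\colon R^{\oplus 8}\to L^{\oplus 2}$, the verification that $\im(T)\subseteq\ker\theta$, and the rank-plus-torsion-free argument; it also makes (4) depend only on (2), not on (3). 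The trade-off is that the paper's direction is the one that naturally falls out of its proof architecture (the $\coker(T)\cong\Omega^1_R(I)$ identification), whereas your map requires the small insight about division by $x$.

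The real gap is in (2) and (3). Checking that the columns of $S$ kill $(x^2,xy,y^2)$ and that the columns of $T$ kill the columns of $S$ establishes $\im(S)\subseteq\ker p$ and $\im(T)\subseteq\ker s$, and minimality is indeed automatic since all entries lie in $\m$. But the assertion that these relations \emph{generate} the full syzygy modules is the technical heart of the Proposition, and you defer it to ``matching degrees via the graded Hilbert series, or a computer algebra check.'' The paper's proof of (2) and (3) is precisely such a computation: it tensors with $R/xR$, exploits that $x$ is a reduction of $\m$ so lengths over $R/xR$ compute multiplicities, decomposes the reduced images into cyclic summands, and compares lengths to force the cokernels to vanish. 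You have the right idea, but the argument as written does not establish (2) or (3) — and since (4) relies on (2), the gap propagates. To close it you would need to carry out the length (or graded-dimension) comparison concretely, as the paper does.
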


\begin{proof}

We note that the type of $R$ is at most $3$; indeed, it is at most $4$ by e.g. \cite[Corollary 2.23]{rosales2009numerical} and cannot equal $4$ as $R$ does not have minimal multiplicity \cite[Corollary 3.2]{rosales2009numerical}. But we also note that $7,8,9$ are psuedo-Frobenius numbers of the numerical semigroup $\langle 5,6,13,14 \rangle$. As $r(R) \le 3$, we must have $r(R)=3$ from e.g. \cite[Exercise 2.14]{rosales2009numerical}, and it follows moreover that a fractional ideal representation of the canonical module of $R$ is $(1,t,t^2)$. But we observe $t^{10}(1,t,t^2)=(t^{10},t^{11},t^{12}) \subseteq R$ and that $I$ is the preimage of $(t^{10},t^{11},t^{12})$ under the isomorphism $\bar{p}$ of Lemma \ref{calcpres}. Thus $I$ is a canonical ideal for $R$, and we have Claim $(1)$.

Now, for $(2)$, set $W:=\coker(S)$. It's easy to see that $\im(S) \subseteq \ker(p)$, so there is an induced surjection $q:W \to I$. Since $x$ is regular on $I$, applying $- \otimes_R R/xR$ to the exact sequence \[0 \rightarrow \ker(q) \rightarrow W \xrightarrow{q} I \rightarrow 0,\] we obtain an exact sequence \[0 \rightarrow \ker(q) \otimes_R R/xR \rightarrow W/xW \xrightarrow{\bar{q}} I/xI \rightarrow 0\]
Let $\bar{(-)}=- \otimes_R R/xR$ and take $\mathfrak{n}$ to be the maximal ideal of $\bar{R}$.

Applying $\bar{(-)}$ to the presentation $R^{\oplus 8} \xrightarrow{S} R^{\oplus 3} \rightarrow W \rightarrow 0$, we have a presentation
\[\bar{R}^{\oplus 8} \xrightarrow{\bar{S}} \bar{R}^{\oplus 3} \rightarrow \bar{W} \rightarrow 0\]
where $\bar{S}=\begin{pmatrix} y & 0 & z & w & 0 & 0 & 0 & 0 \\ 0 & y & 0 & 0 & z & 0 & w & 0 \\ 0 & 0 & 0 & -y^2 & -y^2 & -z & -z & -w \end{pmatrix}$.
We have $\bar{R} \cong k[\![y,z,w]\!]/(y^3,yz,yw,z^2,zw,w^2)$ and we observe that $\Soc(\bar{R})=(y^2,z,w)$. Moreover, as $z,w$ are socle generators of $\mathfrak{n}$, it follows that $\mathfrak{n} \cong (y) \oplus (z) \oplus (w)$. 
As $l_R(\mathfrak{n})=l_R(\bar{R})-1=e(R)-1=4$, we have $l_R((y))=2$. But then we observe that $\begin{bmatrix} z \\ 0 \\ 0 \end{bmatrix}, \begin{bmatrix} w \\ 0 \\ -y^2 \end{bmatrix}, \begin{bmatrix} 0 \\ z \\ -y^2 \end{bmatrix}, \begin{bmatrix} 0 \\ 0 \\ -z \end{bmatrix}, \begin{bmatrix} 0 \\ w \\ -z \end{bmatrix}$, and $\begin{bmatrix} 0 \\ 0 \\ -w \end{bmatrix}$ are all socle generators of $\im(\bar{S})$. It follows that $\im(\bar{S}) \cong \im\left(\begin{bmatrix} y & 0 \\ 0 & y \\ 0 & 0 \end{bmatrix}\right) \oplus k^{\oplus 6} \cong (y)^{\oplus 2} \oplus k^{\oplus 6}$. In particular, $l_R(\im(\bar{S})=2 \cdot 2+6=10$. But as $l_R(\bar{R})=5$, additivity of length forces $l_R(\bar{W})=l_R(\bar{I})=5$. But then $l_R(\bar{\ker(q)})=0$, which forces $q$ to be an isomorphism. Claim $(2)$ now follows. 

We follow a similar approach for Claim $(3)$. Set $V:\coker(T)$. It's elementary to check that each of the 24 generators of $\im(T)$ is contained in $\ker(s)$. There is thus an induced surjection $a:V \to \Omega^1_R(I)$. Applying $\bar{(-)}$, and noting that $x$ is regular on $\Omega^1_R(I)$, we have an exact sequence
\[0 \rightarrow \bar{\ker(a)} \rightarrow \bar{V} \xrightarrow{\bar{a}} \bar{\Omega^1_R(I)} \rightarrow 0.\]
Note from above or from additivity of multiplicity that $l_R(\bar{\Omega^1_R(I)})=10$. We also have a presentation
\[\bar{R}^{\oplus 24} \xrightarrow{\bar{T}} \bar{R}^{\oplus 8} \xrightarrow{\bar{s}} \bar{V} \rightarrow 0\]
with $\bar{T}$ the matrix 
\ \\

{\arraycolsep=1.5pt $\left(\!\begin{array}{cccccccccccccccccccccccc}
\vphantom{\left\{16\right\}}y^{2}&0&z&0&0&0&w&0&0&0&0&0&0&0&0&0&0&0&0&0&0&0&0&0\\
\vphantom{\left\{17\right\}}0&-y^{2}&y^{2}&0&-z&-z&z&0&-w&-w&w&0&0&0&0&0&0&0&0&0&0&0&0&0\\
\vphantom{\left\{23\right\}}0&0&0&-y&0&0&0&0&0&0&0&0&-z&0&-w&-w&0&0&0&0&0&0&0&0\\
\vphantom{\left\{24\right\}}0&0&0&0&0&0&-y&y&0&0&0&0&0&0&z&0&0&0&-w&-w&0&0&0&0\\
\vphantom{\left\{24\right\}}0&0&0&0&0&y&0&0&0&0&0&0&0&z&0&0&-w&-w&0&0&0&0&0&0\\
\vphantom{\left\{25\right\}}0&0&0&0&0&0&0&0&0&0&-y&0&0&0&0&y^{2}&0&0&0&z&0&0&-w&0\\
\vphantom{\left\{25\right\}}0&0&0&0&0&0&0&0&y&0&0&0&0&0&0&0&z&0&0&0&-w&-w&0&0\\
\vphantom{\left\{26\right\}}0&0&0&0&0&0&0&0&0&0&0&-y&0&0&0&0&0&y^{2}&0&0&0&z&0&-w
\end{array}\!\right).$}

\ \\

Applying elementary column operations, $\bar{V}$ is isomorphic to the cokernel of the matrix 
\ \\

 \[\left(\!\begin{array}{cccccccccccccccccccccccc}
\vphantom{\left\{16\right\}}y^{2}&0&z&0&0&0&w&0&0&0&0&0&0&0&0&0&0&0&0&0&0&0&0&0\\
\vphantom{\left\{17\right\}}0&y^{2}&0&0&z&0&0&0&0&w&0&0&0&0&0&0&0&0&0&0&0&0&0&0\\
\vphantom{\left\{23\right\}}0&0&0&y&0&0&0&0&0&0&0&0&z&0&0&w&0&0&0&0&0&0&0&0\\
\vphantom{\left\{24\right\}}0&0&0&0&0&0&0&y&0&0&0&0&0&0&z&0&0&0&w&0&0&0&0&0\\
\vphantom{\left\{24\right\}}0&0&0&0&0&y&0&0&0&0&0&0&0&z&0&0&0&w&0&0&0&0&0&0\\
\vphantom{\left\{25\right\}}0&0&0&0&0&0&0&0&0&0&y&0&0&0&0&0&0&0&0&z&0&0&w&0\\
\vphantom{\left\{25\right\}}0&0&0&0&0&0&0&0&y&0&0&0&0&0&0&0&z&0&0&0&w&0&0&0\\
\vphantom{\left\{26\right\}}0&0&0&0&0&0&0&0&0&0&0&y&0&0&0&0&0&0&0&0&0&z&0&w
\end{array}\!\right).\]
Every column of this matrix corresponds to a socle generator of it's image except the columns 
\[\begin{bmatrix} 0 \\ 0 \\ y \\ 0 \\ 0 \\ 0 \\ 0 \\ 0 \\ 0 \\ 0 \\ 0 \end{bmatrix}, \begin{bmatrix} 0 \\ 0 \\ 0 \\ 0 \\ y \\ 0 \\ 0 \\ 0 \\ 0 \\ 0 \\ 0 \end{bmatrix}, \begin{bmatrix} 0 \\ 0 \\ 0 \\ y \\ 0 \\ 0 \\ 0 \\ 0 \\ 0 \\ 0 \\ 0 \end{bmatrix}, \begin{bmatrix} 0 \\ 0 \\ 0 \\ 0 \\ 0 \\ 0 \\ 0 \\ 0 \\ 0 \\ y \\ 0 \end{bmatrix}, \begin{bmatrix} 0 \\ 0 \\ 0 \\ 0 \\ 0 \\ 0 \\ 0 \\ 0 \\ y \\ 0 \\ 0 \end{bmatrix}, \begin{bmatrix} 0 \\ 0 \\ 0 \\ 0 \\ 0 \\ 0 \\ 0 \\ 0 \\ 0 \\ 0 \\ y \end{bmatrix}.\] From this, we observe that $\im(\bar{V}) \cong (y)^{\oplus 6} \oplus k^{\oplus 18}$. In particular, $l_R(\im(\bar{V}))=2 \cdot 6+18=30$. But then by additivity of length, we see that $l_R(\bar{V})=5 \cdot 8-30=10.$ As $l_R(\bar{V})=l_R(\bar{\Omega^1_R(I)})$, it follows that $\bar{\ker(a)}=0$ so that $\ker(a)=0$ as well. Thus $a$ is an isomorphism and Claim $(3)$ follows.

For $(4)$, we define the map $\theta:R^{\oplus 8} \to L^{\oplus 2}$ by
\[e_1 \mapsto (0,x) \ \ \ e_2 \mapsto (x,0) \ \ \ e_3 \mapsto (xy,y^2) \ \ \ e_4 \mapsto (y^2,z)\]
\[e_5 \mapsto (y^2,0) \ \ \ e_6 \mapsto (z,w) \ \ \ e_7 \mapsto (z,0) \ \ \ e_8 \mapsto (w,0)\]

It's easily seen that $\theta$ is surjective, and it's elementary to check that each of the 24 generators of $\im(T)$ lie in $\ker(\theta)$. So we have an induced surjection $\epsilon:V \to L^{\oplus 2}$. We note $\rank(V)=2\rank(L)=2$, so $\rank(\ker(\epsilon))=0$, but $V \cong \Omega^1_R(I)$ is torsion-free, so it must be that $\ker(\epsilon)=0$, and Claim $(4)$ follows.

For $(5)$, as $L \subseteq \tr_R(L)$, it suffices to show $y \in \tr_R(L)$. We observe that $y \cdot y^2=y^3=xz$, $y \cdot z=yz=xw$, and $y \cdot w=yw=x^4$. It follows that $y \in (x:L)$. But $(x:L) \subseteq \tr_R(L)$ by Proposition \ref{traceprop} (3), establishing $(5)$ and completing the proof.

\end{proof}

\begin{theorem}\label{finex}

Keeping the notation of Proposition \ref{examplesyz}, we have $\m \Ext^i_R(I,R)=0$ for all $i>0$, but $\tr_R(I)=\mathfrak{c}_R=(x^2,xy,y^2,z,w)$. In particular, $R$ is not nearly Gorenstein.

\end{theorem}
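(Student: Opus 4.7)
The theorem asserts two things: a direct ideal computation giving $\tr_R(I) = \mathfrak{c}_R = (x^2, xy, y^2, z, w)$ (from which $R$ is not nearly Gorenstein since $\tr_R(\w) = \tr_R(I) \not\supseteq \m$), and the more subtle annihilation $\m \Ext^i_R(I, R) = 0$ for every $i \geq 1$. I would attack these two claims separately.

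For the trace, my plan is to use Proposition \ref{traceprop}(3) to write $\tr_R(I) = (x^2:I) + (xy:I) + (y^2:I)$ and compute each colon ideal by exponent analysis in the numerical semigroup $S = \langle 5, 6, 13, 14 \rangle$. A direct check shows $t^a \in (t^{10}:I)$ iff $a, a+1, a+2 \in S$, giving $(t^{10}:I) = (t^{10}, t^{11}, t^{12}, t^{13}, t^{14})$, with analogous shifts yielding $(t^{11}:I) = (t^{11}, \ldots, t^{15})$ and $(t^{12}:I) = (t^{12}, \ldots, t^{16})$. Their sum is the conductor $\mathfrak{c}_R = (t^{10}, \ldots, t^{14}) = (x^2, xy, y^2, z, w)$. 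Since $x \notin \mathfrak{c}_R$, one has $\tr_R(I) \subsetneq \m$, so $R$ is not nearly Gorenstein.

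For the $\m$-annihilation, the two essential inputs are Proposition \ref{examplesyz}(4), $\Omega^1_R(I) \cong L^{\oplus 2}$, and Proposition \ref{examplesyz}(5), $\tr_R(L) = \m$. Since $I \cong \w$, we have $I^\vee \cong R$, so \cite[Lemma 2.2]{DK21} combined with Proposition \ref{traceprop}(4) applied directly to $I$ gives only the weaker $\mathfrak{c}_R \Ext^i_R(I, R) = 0$; the same lemma applied to $L$ gives the stronger $\m \Ext^i_R(L, L^\vee) = 0$. Dimension shifting then yields $\Ext^i_R(I, R) \cong \Ext^{i-1}_R(L, R)^{\oplus 2}$ for $i \geq 2$, reducing the task to (a) $\m \Ext^i_R(L, R) = 0$ for all $i \geq 1$ and (b) $\m \Ext^1_R(I, R) = 0$ treated directly.

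The main obstacle is bridging $\Ext^i_R(L, L^\vee)$ with $\Ext^i_R(L, R)$, since $L^\vee \not\cong R$. I would analyze $L^\vee = \Hom_R(L, I) = (x^2, xy, y^2, z) = I + (z)$, an ideal of $R$ containing $I$. By local duality, $\Ext^i_R(L, I) = \Ext^i_R(L, \w) = 0$ for $i \geq 1$, so $\Ext^i_R(L, R) \cong \Ext^i_R(L, R/I)$ and $\Ext^i_R(L, L^\vee) \cong \Ext^i_R(L, L^\vee/I)$ for $i \geq 1$. The short exact sequence $0 \to L^\vee/I \to R/I \to R/L^\vee \to 0$ then links the two via $\Ext^i_R(L, R/L^\vee)$; to control this term I would filter $R/L^\vee$ by its socle (which has codimension one in $R/L^\vee$, reducing to $k$-vector spaces) and carefully track extension classes against the already-known $\m$-annihilation of $\Ext^i_R(L, L^\vee)$. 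For $\Ext^1_R(I, R)$, a direct cokernel computation using the presentation matrix $S$ from Proposition \ref{examplesyz}(2), exploiting the explicit description of $L^*$ as the fractional ideal $L^{-1}$ determined by the semigroup, should suffice.
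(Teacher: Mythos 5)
Your computation of $\tr_R(I)$ via $\tr_R(I) = (x^2:I)+(xy:I)+(y^2:I)$ and exponent analysis in the semigroup $\langle 5,6,13,14\rangle$ is correct and is a self-contained alternative to the paper's citation of \cite[Theorem 6.4]{He23} (which classifies far-flung Gorenstein numerical semigroup rings). That part is fine.

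The treatment of $\m\Ext^i_R(I,R)=0$ has a genuine problem, however, in that you have created an obstacle that does not exist. You write that \cite[Lemma 2.2]{DK21} combined with Proposition~\ref{traceprop}(4), when applied to $L$, ``gives the stronger $\m\Ext^i_R(L,L^\vee)=0$,'' and then devote most of the argument to bridging $\Ext^i_R(L,L^\vee)$ to $\Ext^i_R(L,R)$ via local duality, the short exact sequence $0\to L^\vee/I\to R/I\to R/L^\vee\to 0$, and a socle filtration of $R/L^\vee$. But the content of \cite[Lemma 2.2]{DK21} (and of the standard fact underlying it, that $\tr_R(M)$ annihilates $\Ext^{>0}_R(M,-)$ because elements of the trace are traces of maps $M\to M$ factoring through a free module) is \emph{not} restricted to the target $L^\vee$. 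It gives directly that $\tr_R(L)\cdot\Ext^i_R(L,N)=0$ for all $i>0$ and all $N$; in particular, $\tr_R(L)=\m$ immediately yields $\m\Ext^i_R(L,R)=0$. This is exactly how the paper argues. Your bridging argument is therefore unnecessary, and as written it is also unproven: the step where you ``carefully track extension classes against the already-known $\m$-annihilation'' is not a complete argument and it is not clear it would close up.

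The second issue is that the $i=1$ case is the genuinely hard part of this theorem and you have only gestured at it. Since $I\cong\w$ and $\tr_R(I)=\mathfrak{c}_R\subsetneq\m$, nothing general tells you $\m\Ext^1_R(I,R)=0$; indeed, if $\Ext^1_R(I,-)$ behaved like the higher $\Ext$'s, one could apply \cite[Theorem 2.3]{DK21} and conclude $\tr_R(I)=\m$, contradicting the trace computation. The paper handles $i=1$ by explicitly identifying $\ker(T^T)$ (as the image of a concrete $8\times 8$ matrix $P$, verified by a length count modulo $x$) and then checking by hand that $x\ker(T^T)\subseteq\im(S^T)$ and $y\ker(T^T)\subseteq\im(S^T)$, with $z,w$ handled via $z,w\in\tr_R(I)$. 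Your proposal to ``directly compute the cokernel using $S$ and the fractional ideal $L^{-1}$'' does not engage with this; in particular $\Ext^1_R(I,R)$ is computed from the transpose of the \emph{second} syzygy map $T$, not from $S$ alone, and the presence of $L^*$ is not immediately relevant to this cohomology group. So the key calculation is missing from your argument.
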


\begin{proof}

We note first that the ring $k[\![t^5,t^6,t^{13},t^{14}]\!]$ is a far-flung Gorenstein ring by \cite[Theorem 6.4]{He23}, that is to say the trace of its canonical ideal $(t^{10},t^{11},t^{12})$ is the conductor $(t^{10},t^{11},t^{12},t^{13},t^{14})$. That $\tr_R(I)=(x^2,xy,y^2,z,w)$ follows from noting that this ideal is the preimage of $(t^{10},t^{11},t^{12},t^{13},t^{14})$ under the isomorphism $\bar{p}$ from Lemma \ref{calcpres}, while $I$ is that of $(t^{10},t^{11},t^{12})$. 

For the claim about $\Ext$'s, we first handle the case where $i \ge 2$. Then \[\Ext^i_R(I,R) \cong \Ext^{i-1}_R(\Omega^1_R(I),R) \cong \Ext^{i-1}_R(L,R)^{\oplus 2}.\] As $\tr_R(L)=\m$, combining \cite[Lemma 2.2]{DK21} and Proposition \ref{traceprop} (4) gives that $\m \Ext^{i-1}_R(L,R)=0$ for all $i \ge 2$, so $\m \Ext^i_R(I,R)=0$ for all $i \ge 2$. 

When $i=1$, we observe from Proposition \ref{examplesyz} that 
\[R^{\oplus 24} \xrightarrow{T} R^{\oplus 8} \xrightarrow{S} R^{\oplus 3} \rightarrow 0\]
is the beginning of a minimal free resolution of $I$. Then $\Ext^1_R(I,R)$ is first homology of the complex
\[0 \rightarrow R^{\oplus 3} \xrightarrow{S^T} R^{\oplus 8} \xrightarrow{T^T} R^{\oplus 24},\]
i.e., $\Ext^1_R(I,R) \cong \ker(T^T)/\im(S^T)$. We claim $\ker(T^T)=\im(P)$ where
\[P:=\left(\!\begin{array}{cccccccc}
\vphantom{\left\{-16\right\}}0&x&-x&y&-y^{2}&z&-z&w\\
\vphantom{\left\{-17\right\}}x&0&y&0&z&0&w&0\\
\vphantom{\left\{-23\right\}}x\,y&y^{2}&0&z&0&y\,w&0&x^{3}y\\
\vphantom{\left\{-24\right\}}y^{2}&z&0&w&0&x^{3}y&0&x^{2}y^{2}\\
\vphantom{\left\{-24\right\}}y^{2}&0&z&0&y\,w&0&x^{3}y&0\\
\vphantom{\left\{-25\right\}}z&w&0&x^{3}&0&x^{2}y^{2}&0&x^{2}z\\
\vphantom{\left\{-25\right\}}z&0&w&0&x^{3}y&0&x^{2}y^{2}&0\\
\vphantom{\left\{-26\right\}}w&0&x^{3}&0&x^{2}y^{2}&0&x^{2}z&0
\end{array}\!\right).\]
To see this, we follow as similar approach as in the proof of Theorem \ref{examplesyz}, and note first it is elementary to check that $\im(P) \subseteq \ker(T^T)$. So setting $U:=\coker(P)$ and $V:=R^{\oplus 8}/\ker(T^T) \cong \im(T^T)$, we have a natural surjection $q:U \twoheadrightarrow V$. Set $\bar{R}:=- \otimes_R R/xR$. As $V \hookrightarrow R^{\oplus 24}$, $V$ is torsion-free, so applying $\bar{(-)}$ to the exact sequence 
\[0 \rightarrow \ker(q) \rightarrow U \xrightarrow{q} V \rightarrow 0,\]
we get an exact sequence
\[0 \rightarrow \bar{\ker(q)} \rightarrow \bar{U} \xrightarrow{\bar{q}} \bar{V} \rightarrow 0.\]

Noting that $R/xR \cong k[\![y,z,w]\!]/(y^3,yz,yw,z^2,zw,w^2)$ we have a minimal presentation
\[\bar{R}^{\oplus 8} \xrightarrow{\bar{P}} \bar{R}^{\oplus 8} \rightarrow \bar{U} \rightarrow 0\]
where 
\[\bar{P}=\left(\!\begin{array}{cccccccc}
\vphantom{\left\{-16\right\}}0&0&0&y&-y^{2}&z&-z&w\\
\vphantom{\left\{-17\right\}}0&0&y&0&z&0&w&0\\
\vphantom{\left\{-23\right\}}0&y^{2}&0&z&0&0&0&0\\
\vphantom{\left\{-24\right\}}y^{2}&z&0&w&0&0&0&0\\
\vphantom{\left\{-24\right\}}y^{2}&0&z&0&0&0&0&0\\
\vphantom{\left\{-25\right\}}z&w&0&0&0&0&0&0\\
\vphantom{\left\{-25\right\}}z&0&w&0&0&0&0&0\\
\vphantom{\left\{-26\right\}}w&0&0&0&0&0&0&0
\end{array}\!\right).\]

As in the proof of Theorem \ref{examplesyz}, we have $\Soc(\bar{R})=(y^2,z,w)$, that $l_{\bar{R}}((y))=2$, and we also observe that $\Ann_{\bar{R}}((y))=\Soc(\bar{R})$ so that $(y) \cong \bar{R}/\Soc(\bar{R})$. Then every column of $\bar{P}$ is a socle generator of $\im(\bar{P})$ except the columns $\begin{bmatrix} 0 \\ y \\ 0 \\ 0 \\ z \\ 0 \\ w \\ 0 \end{bmatrix}$ and $\begin{bmatrix} y \\ 0 \\ z \\ w \\ 0 \\ 0 \\ 0 \\ 0 \end{bmatrix}$. So 
\[\im(\bar{P}) \cong \im\left(\begin{bmatrix} 0 & y \\ y & 0 \\ 0 & z\\ 0 & w\\ z & 0 \\ 0 & 0 \\ w & 0\\ 0 & 0\end{bmatrix}\right) \oplus k^{\oplus 6} \cong (\bar{R}/\Soc(\bar{R}))^{\oplus 2} \oplus k^{\oplus 6} \cong (y)^{\oplus 2} \oplus k^{\oplus 6}.\]

In particular, we have $l_R(\im(\bar{P}))=10$, so noting that $l_R(\bar{R})=5$, additivity of length gives that $l_R(\bar{U})=8 \cdot 5-10=30$. 

Now since $\Ext^1_R(I,R)$ has finite length, we have $\rank(\ker(T^T))=\rank(\im(S^T))=3-\rank(I^*)=2$. Then $\rank(V)=8-\rank(\ker(T^T))=6$. So $e_R(V)=e(R) \cdot 6=30$. But $V \hookrightarrow R^{\oplus 24}$, and so is torsion-free. As $x$ is a redution of $\m$, $l_R(\bar{V})=e_R(V)=30$. But then $\bar{q}$ is a surjection between two modules of the same length, so is an isomorphism. It follows that $\bar{\ker(q)}=0$ so that $\ker(q)=0$ as well, i.e. that $q$ is an isomorphism. But $q$ is given on basis vectors by $e_i \mapsto e_i$, so we have $\ker(T^T)=\im(P)$, as desired. 

It is now elementary to check explicitly that $x\ker(T^T) \subseteq \im(S^T)$ and $y\ker(T^T) \subseteq \im(S^T)$ 
As $z,w \in \tr_R(I)$, it follows from \cite[Corollary 2.4]{DK21} that $z,w \in \Ann_R(\Ext^1_R(I,R))$. Thus $\m \Ext^1_R(I,R)=0$, completing the proof.

\end{proof}

\begin{remark}

The argument actually shows that for $R=k[\![t^5,t^6,t^{13},t^{14}]\!]$ we have $\m \Ext^i_R(I,-)=0$ for all $i \ge 2$. It is only $\Ext^1_R(I,-)$ that, in view of \cite[Theorem 2.3]{DK21}, provides an obstruction to $\tr_R(I)$ being equal to $\m$.

\end{remark}

\begin{remark}
The example $R=k[\![t^5,t^6,t^{13},t^{14}]\!]$ giving a negative answer to Question \ref{daoetal} is minimal in several regards. For instance $R$ has almost minimal multiplicity whereas Theorem \ref{nearlygormainthm} implies Question \ref{daoetal} has an affirmative answer for numerical semigroup rings of minimal multiplicity. The ring $R$ has type $3$, while \cite[Theorem 4.4]{DK21} implies an affirmative answer for numerical semigroup rings of type $2$. $R$ has embedding dimension $4$ and multiplicity $5$, while a numerical semigroup ring of embedding dimension at most $3$ or multiplicity at most $4$ must be either Gorenstein, have type $2$, or have minimal multiplicity, reducing again to the results mentioned above. So in each of these respects the example $R$ is minimal.
\end{remark}

\section*{Acknowledgements}

This material is based upon work supported by the National Science Foundation under Grant No. DMS-1928930 and by the Alfred P. Sloan Foundation under grant G-2021-16778, while the first author was in residence at the Simons Laufer Mathematical Sciences Institute (formerly MSRI) in Berkeley, California, during the Spring 2024 semester. The second author was supported partially by Project No. 51006801 - American Mathematical Society-Simons Travel Grant.

\bibliographystyle{alpha}
\bibliography{mybib}

\vspace{.3cm}

\end{document}